\numberwithin{equation}{section}
\newtheorem{theorem}{Theorem}
\newtheorem{proposition}{Proposition}
\newtheorem{remark}{Remark}
\newtheorem{lemma}{Lemma}
\title{Alternating minimization for square root principal component pursuit}
\author{Shengxiang Deng\thanks{School of Data Science, Fudan University, Shanghai, China \texttt{(sxdeng21@m.fudan.edu.cn)}.}, Xudong Li \thanks{School of Data Science, Fudan University, Shanghai, China \texttt{(lixudong@fudan.edu.cn)}.}, Yangjing Zhang \thanks{Institute of Applied Mathematics, Academy of Mathematics and Systems Science, Chinese Academy of Sciences, Beijing, China \texttt{(yangjing.zhang@amss.ac.cn)}.}}
\begin{document}
	\maketitle
	\begin{abstract}
		Recently, the square root principal component pursuit (SRPCP) model has garnered significant research interest. It is shown in the  literature that the SRPCP model guarantees robust matrix recovery with a universal, constant penalty parameter. While its statistical advantages are well-documented, the computational aspects from an optimization perspective remain largely unexplored.
		In this paper, we focus on developing efficient optimization algorithms for solving the SRPCP problem. Specifically, we propose a tuning-free alternating minimization (AltMin) algorithm, where each iteration involves subproblems enjoying closed-form optimal solutions.
		Additionally, we introduce techniques based on the variational formulation of the nuclear norm and Burer-Monteiro decomposition to further accelerate the AltMin method. Extensive numerical experiments confirm the efficiency and robustness of our algorithms.
	\end{abstract}

	\textbf{KEYWORDS: }Square root optimization, principal component pursuit, alternating minimization, low rank and sparse optimization
	
	\maketitle

	\section{Introduction}
	Low-dimensional models are ubiquitous in modern high-dimensional data analysis. Various algorithmic and theoretical advances in retrieving low-dimensional information such as sparsity, and low-rankness from  massive
	high-dimensional datasets are made in recent years. These developments have undoubtedly revolutionized the practice of data science and signal processing, and found applications in many areas, such as image processing, bioinformatics, and machine learning. In this paper, we focus on recovering a low-rank matrix from corrupted observations.
	Specifically, we study the following observation model:
	\begin{equation}\label{obser-model}
		D = L_0 + S_0 + Z_0,
	\end{equation}
	where $D\in\mathbb{R}^{n_1\times n_2}$, without loss of generality, $n_1\geq n_2$, is a given observation matrix,  $L_0$ is an unknown low-rank matrix, $S_0$ contains sparse corruptions, and $Z_0$ is some dense noise. We aim to accurately and effectively estimate $L_0$ and $S_0$.

	This model has been extensively studied in the literature, see  \cite{candes2011robust,chandrasekaran2011rank}. In the seminar work of \cite{candes2011robust}, %
	model \eqref{obser-model} was studies under the noiseless setting, i.e., $Z_0$ was assumed to be absent in model \eqref{obser-model}.
	Let $\|X\|_{*} $ and $\|X\|_1 = \sum_{ij} |X_{ij}|$ denote the nuclear norm and the $\ell_1$-norm
	of the matrix $X$, respectively.  \cite{candes2011robust} %
	showed that under some assumptions on $L_0$ and $S_0$ to avoid the  identifiability issues, it is possible to exactly recover $(L_0, S_0)$ by solving the following convex principal component pursuit problem:
	\begin{equation}
		\label{prob:pcp}
		\min \left\{ \|L\|_* + \lambda\|S\|_1 \mid L + S = D
		\right\},
	\end{equation}
	where $\lambda = 1/\sqrt{n_1}$ is the universal regularization parameter, and is obviously independent of the rank of $L_0$ and the sparsity of $S_0$. Later,   \cite{zhou2010stable} %
	studied the more practical noisy setting where $Z_0$ in model \eqref{obser-model} is not vacuous. Assuming similar separation and incoherence conditions as in \cite{candes2011robust} and upper boundedness of the Frobenius norm of $Z_0$, i.e., $\|Z_0\|_F\le \delta$ for some known $\delta > 0$, they established recovery results for the optimal solution pair to the following stabilized generalization of \eqref{prob:pcp}:
	\begin{equation}
		\label{prob:spcp}
		\min \left\{ \|L\|_* + \lambda \|S\|_1 \mid \|L + S - D\|_F \le \delta
		\right\}
	\end{equation}
	with $\lambda = 1/\sqrt{n_1}$.
	Instead of solving  \eqref{prob:spcp} directly,
	\cite{zhou2010stable} proposed in the simulation study to solve the following reformulation
	\begin{equation}
		\label{prob:uspcp}
		\min_{L,S}\,  \|L\|_* + \lambda \|S\|_1 + \frac{\rho}{2} \|L + S - D\|_F^2,
	\end{equation}
	where $\lambda = 1/\sqrt{n_1}$, $\rho >0$ is heuristically chosen based on the prior knowledge of $Z_0$. It is not difficult to observe that problems \eqref{prob:spcp} and \eqref{prob:uspcp} are equivalent in the sense that
	there exists a correspondence between $\rho>0$ and $\delta >0$ such that the optimal solution sets of \eqref{prob:spcp} and \eqref{prob:uspcp} are the same. Comparing to \eqref{prob:spcp}, problem \eqref{prob:uspcp} is considered to be more approachable from the optimization algorithmic aspect. Indeed, problem \eqref{prob:uspcp} can be readily solved via various first-order algorithms, such as the famous accelerated proximal gradient method \cite{nesterov27method}. Theoretical recovery results based on model \eqref{prob:uspcp} (or similar ones) are further investigated and strengthened by %
	\cite{klopp2017robust,wong2017matrix,chen2021bridging}. A main criticism of models \eqref{prob:spcp} and \eqref{prob:uspcp} is that their constructions rely on the prior knowledge of the noise level to determine the parameter $\rho$.  However, this information is usually not available ahead of time. Thus, in practice, to obtain meaningful estimators, the computationally expensive step of tuning these parameters is unavoidable.
	
	To overcome this difficulty, %
	\cite{zhang2021square} proposed a new framework termed square root principal component pursuit. They showed that under standard identifiability assumptions as in \cite{candes2011robust,zhou2010stable}, any optimal solution pair to
	\begin{equation}
		\label{prob:srpcp_zhang}
		\min_{L,S}\,\,
		\|L\|_* + \frac{1}{\sqrt{n_1}} \|S\|_1 + \sqrt{\frac{n_2}{2}}  \|L + S - D\|_F
	\end{equation}
	recovers $(L_0, S_0)$ with a recovery error of order ${\cal O}(\sqrt{n_1n_2}\|Z_0\|_F)$. We note that different from the squared penalty term in \eqref{prob:uspcp}, the square root penalty term is used in \eqref{prob:srpcp_zhang}, which makes the universal choice of $\rho =  \sqrt{n_2/2}$ possible. Thus,  \eqref{prob:srpcp_zhang} is a tuning-free model for recovering  $(L_0, S_0)$. Similar ideas of using square-root loss to obtain tuning-free methods for various high-dimensional regression applications can be found in \cite{belloni2011square,sun2012scaled,stucky2017sharp,yang2022optimal}. In \cite{zhang2021square}, slack variables $(L_1, S_1, Z)$ were further introduced to equivalently reformulate model \eqref{prob:srpcp_zhang} into
	\begin{equation}
		\label{prob:srpcp_ADMM}
		\begin{array}{cl}
			\min  & \|L_1\|_*  + \frac{1}{\sqrt{n_1}} \|S_1\|_1 + \sqrt{\frac{n_2}{2}} \|Z\|_F \\[3mm]
			\mbox{s.t.}& L + S + Z = D, \, L - L_1 = 0, \, S - S_1 = 0.
		\end{array}
	\end{equation}
	Then, the two-block alternating direction method of multipliers (ADMM) by \cite{glowinski1975approximation,gabay1976dual} was used to solve the above reformulated model. While the main focus of \cite{zhang2021square} is on the statistical recoverability of model \eqref{prob:srpcp_zhang}, emphasizing  the square-root loss and the universal choice of $\rho$, the practical efficiency, as well as theoretical convergence properties, of the used ADMM algorithm were not fully addressed.
	
	In this paper, we propose to further examine the model
	\begin{equation}\label{prob:srpcp}
		\min_{L,\,S}\,\,\|L\|_* + \lambda\|S\|_1 + \mu\|L+S-D\|_F
	\end{equation}
	with an emphasis from the optimization perspective. First, we reinterpret  problem \eqref{prob:srpcp} as a distributionally robust optimization (DRO) problem, which aims to find solutions that minimize the worst-case squared loss over some specially designed uncertainty set.
	As a compliment to the current literature, this new interpretation deepens our understandings of model \eqref{prob:srpcp},
	and allows us to tune the parameter $\mu$ to obtain meaningful estimators even without the widely used separation and incoherence assumptions.
	In fact, the relationships between regularized square-root loss problems and robust optimization have been observed widely in the literature; see, e.g., \cite{el1997robust,xu2010robust,xu2009robustness,shafieezadeh2015distributionally,bertsimas2018characterization}.
	Secondly, we propose to  solve problem \eqref{prob:srpcp} directly by the Alternating Minimization (AltMin) method \cite{tseng2001convergence,beck2015convergence}. The AltMin method or its general form, namely the block coordinate descent method (see, e.g., \cite{tseng2001convergence}), is a fundamental and popular method for solving optimization problems of multiple blocks of decision variables. At each iteration, the objective function is minimized with respect to one of the blocks while the remaining is fixed. %
	Specifically, the AltMin  method  for solving \eqref{prob:srpcp} minimizes the two variables $S$ and $L$ in turn and takes the following updating rule at the  $i$-th iteration:
	\begin{equation}
		\label{alg:alm}
		\left\{
		\begin{aligned}
			& S^{i+1} \in\arg\min_S \,\, \{  \lambda\|S\|_1 + \mu\|L^{i}+S-D\|_F \}, \\[2pt]
			& L^{i+1} \in\arg\min_L \,\, \{ \|L\|_* +  \mu\|L+S^{i+1}-D\|_F \}.
		\end{aligned}	
		\right.
	\end{equation}
	The two involved subproblems in \eqref{alg:alm} are shown to have closed-form optimal solutions (see Sections~\ref{subsec:updateS} and \ref{subsec:updateL}). This nice observation not only makes the AltMin method particularly suitable and easy to implement for solving  \eqref{prob:srpcp}, but it may also be of independent interest. Compared to the ADMM method, the above AltMin algorithm has much simpler updating rules and is tuning-free. We also conducted a comprehensive analysis on the convergence properties of the AltMin method.
	Thirdly, to further enhance the numerical performance of our alternating minimization approach, we propose to accelerate the AltMin method by reformulating the ``$L$''-subproblem  using the variational formulation of the nuclear norm and the Burer-Monteiro (BM) decomposition technique.  This idea has been widely used in the optimization community to improve the practical performance of various algorithms, see, e.g., \cite{lee2022accelerating,rennie2005fast}. In our case, this approach allows us to use the cheap partial Singular Value Decomposition (SVD) to alleviate the heavy computational burden of the full SVD in the updating of $L$ in \eqref{alg:alm}. Although the new subproblem is nonconvex,	
	careful examinations in Section~\ref{sec:acc} reveal that it still enjoys closed-form and easy-to-compute optimal solutions.
	Another advantage of our acceleration strategy is that we are able to provide a simple, theoretically guaranteed strategy for estimating the rank that needs to be computed in the BM-decomposition.
	The effectiveness of the proposed acceleration strategy is demonstrated through extensive numerical experiments.
	
	The remaining parts of this paper are organized as follows. In the next section, we review
	some key concepts and settings in the distributionally robust optimization and
	establish the equivalent DRO representation of model \eqref{prob:srpcp}. In Section~\ref{sec:AltMin}, we present our alternating minimization method for solving \eqref{prob:srpcp}. The detailed steps for solving involved subproblems and the convergence analysis of the proposed algorithm are also presented in this section. Then, in Section~\ref{sec:acc}, we introduce our accelerated algorithm and discuss its detailed implementations.
	In Section~\ref{sec:num}, we conduct
	numerical experiments to evaluate the performance of our algorithms against the ADMM implemented by \cite{zhang2021square}. We conclude our paper in the last section.
	
	\medskip
	\noindent {\bf Notation}: Let $[n]:=\{1,2,\dots,n\}.$ Given any matrix $X$, we use $\|X\|$,  $\|X\|_F$, and $\|X\|_*$ to denote its spectral, Frobenius, and nuclear norm, respectively, and denote its vectorize $\ell_1$ norm by $\|X\|_1: = \sum_{i,j} |X_{ij}| $. The inner product of two matrices $X$ and $Y$ is given by $\langle X,Y \rangle = {\rm Tr}(X^TY)$.
	We use $\odot$ to denote the elementwise (Hadamard) product.
	Denote the Dirac distribution at a point $X$ by ${\bf 1}_{\{X\}}$.
	For any given vector $a \in \mathbb{R}^n$, we use $\|a\|_0$, $\|a\|_1$, $\|a\|_2$, and $\|a\|_{\infty}$ to denote its number of nonzero elements, $\ell_1$, $\ell_2$, and $\ell_{\infty}$ norms, respectively. Denote ${\rm sign}(a) := \{ v \in \mathbb{R}^n \,|\, v_i = 1 \mbox{ if } a_i>0;  v_i = -1 \mbox{ if } a_i<0;  v_i = 0 \mbox{ if } a_i=0\} $, and $|a|:=(|a_1|,\dots,|a_n|)$.
	${\rm Diag}(v)$ returns a square diagonal matrix with the elements of vector $v$ on the main diagonal.
	Let $\mathcal{B}_2:=\{v\,|\,\|v\|_2\leq 1\}$. Then, the subgradient of $f(x)=\|x\|_2$ is given by  $\partial f(x) = \{\frac{x}{\|x\|_2}\} \mbox{ if } x\neq 0; \partial f(x) = \mathcal{B}_2 \mbox{ if } x= 0$.
	The normal cone of $\mathbb{R}^n_+$ at $s\in\mathbb{R}^n_+$ is given by
	\begin{equation}\label{normal-cone}
		\mathcal{N}_{\mathbb{R}^n_+}(s) = \{v \in \mathbb{R}^n \,|\, \langle y-s,v \rangle \leq 0,\,\,\forall\, y\in\mathbb{R}^n_+\} = \{v \in \mathbb{R}^n \,|\, v_i = 0 \mbox{ if } s_i>0; v_i \leq 0 \mbox{ if } s_i=0\}.
	\end{equation}
	
	\section{Robust squared loss regression problems}
	As mentioned in the introduction, in the previous literature, e.g., \cite{candes2011robust,zhang2021square,zhou2010stable}, the regularized square root loss models are analyzed under the identifiability assumptions such as the incoherence assumption on the low-rank part and the uniformly sampling assumption of the support of the sparse part \cite{candes2011robust}.
	However, in many real applications, these assumptions are not satisfied or difficult to verify computationally \cite{chandrasekaran2011rank}. In this section, we take a different approach and interpret the regularized square root loss model as a distributionally robust optimization (DRO) problem.
	Specifically, we derive an equivalent DRO reformulation for the following optimization problem
	\begin{equation}\label{prob:srpcp-obs}
		\min_{L,\,S}\,\,\|L\|_* + \lambda\|S\|_1 + \mu \sqrt{\sum_{(i,j)\in \Omega_{\rm obs}} (L_{ij} + S_{ij} - D_{ij} )^2},
	\end{equation}
	which corresponds to a more general observation model compared to \eqref{obser-model}
	\begin{equation}\label{obser-model-2}
		D_{ij} = (L_0)_{ij} + (S_0)_{ij} + (Z_0)_{ij}, \quad (i,j) \in \Omega_{\rm obs}.	
	\end{equation}
	Here, the index set $\Omega_{\rm obs}$ is sampled uniformly at random from $
	\left\{ (j,k)\,|\, j\in[n_1],k\in[n_2] \right\}
	$.
	This model generalizes \eqref{obser-model} by allowing partially observed data, making it more applicable to scenarios where observations are incomplete.
	
	In fact, when samplings is performed without replacement and $\Omega_{\rm obs} = \left\{ (j,k)\,|\, j\in[n_1],k\in[n_2] \right\}$, \eqref{prob:srpcp-obs} simplifies to  \eqref{prob:srpcp}.
	The DRO interpretation  provides a new perspective on %
	the robustness properties of the regularized square root loss model \eqref{prob:srpcp-obs}.
	Moreover, without relying on the identifiability assumptions, it has the potential for us to tune the parameter $\mu$ based on the techniques of parameter selection derived from the perspective of DRO.
	Our new interpretation indicates that the regularized square root loss model \eqref{prob:srpcp-obs} is rather flexible and  can be applied to a broader range of applications.

	For the simplicity of notation, we denote $B_0 := [L_0;S_0]\in\mathbb{R}^{2n_1\times n_2}$. For any $B = [L;S]\in\mathbb{R}^{2n_1\times n_2}$ and $\lambda > 0$, we define a norm function $\|B\|_{\lozenge,\lambda} := \|L\|_* + \lambda\|S\|_1$. Following \cite{chu2021regularized}, the corresponding dual norm is given by $\|B\|_{\blacklozenge,\lambda} = \max\{\|L\|,\|S\|_{\infty}/\lambda\},\,\forall \, B = [L;S]\in\mathbb{R}^{2n_1\times n_2},\lambda>0$.
	To better illustrate the connection between the square root loss models and the distributionally robust optimization (DRO) problem, we adopt an observation model, as proposed by %
	\cite{bertsimas2018characterization}. Specifically, we consider a dataset of $n$ samples $(X_i,Y_i)$, which are assumed to be independent and identically distributed (i.i.d.), where each pair is drawn according to the following distribution:
	\begin{equation}\label{obser1}
		Y_i = \langle X_i,B_0 \rangle + \sigma \epsilon_i,\quad i \in [n].
	\end{equation}
	Here $\epsilon_i$'s are i.i.d. noises such that ${\rm E}(\epsilon_i) = 0$ and $ {\rm E}(\epsilon_i^2) = 1$,  $\sigma > 0$ is the unknown noise level, and $X_i$'s are i.i.d. random matrices sampled from
	\begin{equation}\label{set1}
		\Omega_0:=\left\{ [e_j(n_1)e_k^T(n_2);e_j(n_1)e_k^T(n_2)] \in \mathbb{R}^{2n_1\times n_2}\,|\, j\in[n_1],k\in[n_2] \right\},
	\end{equation}
	where $e_j(n_1)$ and $e_k(n_2)$ are the $j$ and $k$-th canonical basis vectors in $\mathbb{R}^{n_1}$ and $\mathbb{R}^{n_2}$, respectively.
	A specific instance of this setting is the observation model \eqref{obser-model-2}, where $\Omega_{\rm obs} \subseteq \Omega_0$ with $|\Omega_{\rm obs}|=n$.
	
	The next proposition follows directly from  Theorem~4 of  \cite{chu2021regularized}, %
	which establishes the equivalence between a DRO formulation of a squared loss linear regression problem  and a square root loss penalized model. In particular, the penalty level $\sqrt{\delta}$ is fully quantified by the ``radius'' $\delta$ of the uncertainty set. Before stating the proposition, we give some basic definitions in optimal transport; see Chapter~6 of \cite{villani2008optimal} for details. For any two probability measures $\mathbb{P}$ and $\mathbb{Q}$ in $\mathbb{R}^{2n_1\times n_2 + 1}$, $\Pi(\mathbb{P},\mathbb{Q})$ denotes the set of all joint probability measures on $\mathbb{R}^{2n_1\times n_2 + 1} \times \mathbb{R}^{2n_1\times n_2 + 1}$ whose marginals are $\mathbb{P}$ and $\mathbb{Q}$.
	For a given cost function $c:\,\mathbb{R}^{2n_1\times n_2 + 1} \times \mathbb{R}^{2n_1\times n_2 + 1} \to [0,\infty]$, where $c(u,v)$ is the cost for transporting one unit of mass from $u$ to $v$ and $c(u,u)=0$,
	the optimal transport cost between $\mathbb{P}$ and $\mathbb{Q}$ is defined as
	\begin{equation}\label{eq:transport-cost}
		\mathcal{D}_{c}(\mathbb{P},\mathbb{Q}) := \inf_{\pi \in \Pi(\mathbb{P},\mathbb{Q})}  \int c(u,v) d\pi(u,v).
	\end{equation}

	\begin{proposition} \label{prop:equiv1}
		Let the squared loss function be $ \ell (X,Y;B) = (Y - \langle X,B \rangle)^2,\,Y\in\mathbb{R},X,B\in\mathbb{R}^{2n_1\times n_2}$ and the cost function $c: \, \mathbb{R}^{2n_1\times n_2+1} \times \mathbb{R}^{2n_1\times n_2+1} \to [0, \infty] $ be defined by
		\begin{equation}\label{eq:def_c_dist}
			c ((X,Y),(\widetilde{X},\widetilde{Y})):= \left\{
			\begin{array}{ll}
				\|X - \widetilde{X}\|_{\blacklozenge,\lambda}^2, & \mbox{if } Y = \widetilde{Y}, \\[6pt]
				+\infty, & \mbox{otherwise}.
			\end{array} \right.
		\end{equation}
		Let $\mathbb{P}_n:= \frac{1}{n} \sum_{i=1}^{n} {\bf 1}_{\{(X_i,Y_i)\}}$ denote the empirical distribution.	Then it holds that
		\begin{equation}\label{eq:DRO}
			\inf_{B\in\mathbb{R}^{2n_1\times n_2}} \,\,\sup_{\mathbb{P}:\, \mathcal{D}_c( \mathbb{P},\mathbb{P}_n) \leq \delta }  \,{\rm E}_{\mathbb{P}} [\ell (X,Y;B) ]
			= \inf_{B\in\mathbb{R}^{2n_1\times n_2}} \, \left\{ \sqrt{{\rm E}_{\mathbb{P}_n} [\ell (X,Y;B) ]} + \sqrt{\delta}  \|B\|_{\lozenge,\lambda} \right\}^2.
		\end{equation}
	\end{proposition}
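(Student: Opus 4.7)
My plan is to verify that the proposition is a specialization of Theorem~4 in \cite{chu2021regularized}, so the task is mainly to match the hypotheses and to rederive the closed-form inner maximization as a sanity check. Since the squared loss $\ell(X,Y;B)=(Y-\langle X,B\rangle)^2$ is a convex quadratic in $X$ for fixed $(Y,B)$, and since the transport cost $c$ is a squared norm in $X$ with $Y$ fixed, the pair $(\ell,c)$ fits directly into the framework of \cite{chu2021regularized}; the only verification needed is that $\|\cdot\|_{\blacklozenge,\lambda}$ is indeed the dual of $\|\cdot\|_{\lozenge,\lambda}$ on $\mathbb{R}^{2n_1\times n_2}$, which is exactly the computation carried out in \cite{chu2021regularized} (writing $B=[L;S]$ and using the fact that the dual of the nuclear norm is the spectral norm, and the dual of $\lambda\|\cdot\|_1$ is $\|\cdot\|_\infty/\lambda$).

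Independent of that citation, I would also give the following self-contained derivation. First, appeal to strong duality for Wasserstein-type DRO (see, e.g., Gao--Kleywegt or Blanchet--Murthy) to rewrite the inner supremum as
\begin{equation*}
\sup_{\mathbb{P}:\,\mathcal{D}_c(\mathbb{P},\mathbb{P}_n)\le\delta}\mathbb{E}_{\mathbb{P}}[\ell(X,Y;B)]=\inf_{\gamma\ge 0}\left\{\gamma\delta+\frac{1}{n}\sum_{i=1}^{n}\sup_{(X,Y)}\bigl\{\ell(X,Y;B)-\gamma\, c((X,Y),(X_i,Y_i))\bigr\}\right\}.
\end{equation*}
Because $c=+\infty$ whenever $Y\ne\widetilde{Y}$, the inner pointwise supremum is restricted to $Y=Y_i$; writing $r_i:=Y_i-\langle X_i,B\rangle$ and $t:=\|X-X_i\|_{\blacklozenge,\lambda}$, the duality between $\|\cdot\|_{\blacklozenge,\lambda}$ and $\|\cdot\|_{\lozenge,\lambda}$ gives $|\langle X-X_i,B\rangle|\le t\,\|B\|_{\lozenge,\lambda}$ with equality attainable for every $t\ge 0$. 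Hence the inner sup equals
\begin{equation*}
\sup_{t\ge 0}\bigl\{(|r_i|+t\|B\|_{\lozenge,\lambda})^2-\gamma t^2\bigr\}.
\end{equation*}

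Next, carry out this one-dimensional optimization. Writing $a:=\|B\|_{\lozenge,\lambda}$, the quantity is finite only for $\gamma>a^2$, and a straightforward calculus yields the optimal $t^\ast=|r_i|a/(\gamma-a^2)$ with maximum value $|r_i|^2\gamma/(\gamma-a^2)$. Substituting back and letting $R^2:=\mathbb{E}_{\mathbb{P}_n}[\ell(X,Y;B)]=\frac{1}{n}\sum_i r_i^2$, the outer infimum reduces to
\begin{equation*}
\inf_{\gamma>a^2}\Bigl\{\gamma\delta+\frac{R^2\gamma}{\gamma-a^2}\Bigr\}.
\end{equation*}
Setting $\tau=\gamma-a^2$ and minimizing over $\tau>0$ gives $\tau^\ast=Ra/\sqrt{\delta}$ (assuming $R,a>0$; the degenerate cases follow by continuity), and plugging in produces the value $(R+a\sqrt{\delta})^2=\bigl(\sqrt{\mathbb{E}_{\mathbb{P}_n}[\ell]}+\sqrt{\delta}\,\|B\|_{\lozenge,\lambda}\bigr)^2$. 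Taking the infimum over $B$ yields \eqref{eq:DRO}.

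The main obstacle I anticipate is justifying the use of strong duality: one must check that the measurability and growth conditions required by the Gao--Kleywegt / Blanchet--Murthy type theorems are met for $(\ell,c)$, and that the supremum over $\mathbb{P}$ is indeed attained (or approximated) by distributions supported on the ``perturbed'' atoms $(X,Y_i)$. Once that functional-analytic step is in place, the remaining algebra is elementary and exactly as above; this is why the authors can invoke \cite[Theorem~4]{chu2021regularized} directly.
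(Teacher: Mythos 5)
Your proposal is correct, and it goes further than the paper does: the paper offers no proof of Proposition~\ref{prop:equiv1} at all, simply asserting that it ``follows directly from Theorem~4 of \cite{chu2021regularized}.'' Your first paragraph reproduces exactly that route (checking that the squared loss and the squared-dual-norm transport cost fit the cited framework, and that $\|\cdot\|_{\blacklozenge,\lambda}$ is the dual of $\|\cdot\|_{\lozenge,\lambda}$, which holds because the dual of a sum of norms on a product space is the maximum of the individual dual norms). Your second, self-contained derivation is a genuine addition: the strong-duality reformulation, the restriction to $Y=Y_i$ forced by the infinite cost, the one-dimensional maximization giving $\lvert r_i\rvert^2\gamma/(\gamma-a^2)$ for $\gamma>a^2$, and the outer minimization via $\tau=\gamma-a^2$ yielding $(R+a\sqrt{\delta})^2$ are all algebraically correct (I verified the stationary points and optimal values). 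What this buys is transparency — the reader sees exactly where the square-root loss and the additive penalty $\sqrt{\delta}\,\|B\|_{\lozenge,\lambda}$ come from — at the cost of having to justify the minimax/strong-duality step for this particular cost function, which you rightly identify as the only nontrivial obstacle and which is precisely the content delegated to \cite{chu2021regularized}. The only minor care point is the degenerate bookkeeping: when $R=0$ or $a=0$ the interior stationary point $\tau^{\ast}=Ra/\sqrt{\delta}$ degenerates, and when all $r_i=0$ the inner supremum is finite also at $\gamma=a^2$; your appeal to continuity handles these, but a referee might ask you to spell it out.
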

	
	We make an important remark on Proposition~\ref{prop:equiv1} about the uncertainty set $ \mathcal{U}:=\{\mathbb{P}\,|\, \mathcal{D}_c(\mathbb{P},\mathbb{P}_n) \leq \delta \}$ and the worst case loss. The uncertainty set $\mathcal{U}$ is centered at the empirical distribution $\mathbb{P}_n$, and the worst case loss accounts for all the probability measures that are plausible variations of $\mathbb{P}_n$.
	We observe that the marginal distribution of $\mathbb{P}\in\mathcal{U}$ on $Y$ should be the same as that of $\mathbb{P}_n$ since the cost function $c$ assigns infinite cost when $Y \neq \widetilde{Y}$.
	Denote the random variable $\xi := (X,Y)$, the samples $\hat{\xi}_i := (X_i,Y_i)$,  and the loss function $\ell(\xi) := \ell (X,Y;B)$. The worst case loss can be further written as:
	\begin{align}
		&\sup_{\mathbb{P}:\, \mathcal{D}_c(\mathbb{P},\mathbb{P}_n) \leq \delta }  \,{\rm E}_{\mathbb{P}} [\ell (\xi) ]\label{eq:loss}\\
		=& \sup_{\Pi,\mathbb{P}} \, \int \ell(\xi) \,\mathbb{P}(d\xi) \nonumber\\
		&{\rm s.t.} \int c(\xi,\xi') \Pi(d\xi,d\xi') \leq \delta,\Pi \mbox{ is a joint distribution of $\xi$ and $\xi'$ with marginals $\mathbb{P}$ and $\mathbb{P}_n$}\nonumber\\
		=& \sup_{\mathbb{P}^i}\, \frac{1}{n}\sum_{i=1}^n \int\ell(\xi) \,\mathbb{P}^i(d\xi)\quad{\rm s.t.}\quad \frac{1}{n}\sum_{i=1}^n \int c(\xi,\hat{\xi}_i)\,\mathbb{P}^i(d\xi) \leq\delta\nonumber\\
		=& \sup_{\mathbb{Q}^i}\, \frac{1}{n}\sum_{i=1}^n \int(Y_i - \langle\zeta,B\rangle )^2 \,\mathbb{Q}^i(d\zeta)\quad{\rm s.t.}\quad \frac{1}{n}\sum_{i=1}^n \int \|\zeta - X_i\|^2_{\blacklozenge,\lambda}\,\mathbb{Q}^i(d\zeta) \leq\delta\nonumber\\
		=&\sup_{\zeta_1,\dots,\zeta_n} \frac{1}{n}\sum_{i=1}^n{\rm E}\Big[\big(Y_i-\langle X_i,B\rangle + \langle \zeta_i,B\rangle \big)^2 \Big]  {\rm s.t.}
		\frac{1}{n} \sum_{i=1}^{n}{\rm E} \Big[\| \zeta_i\|_{\blacklozenge,\lambda}^2\Big] \leq \delta, \mbox{$\{\zeta_i\}$ are random variables.}\nonumber
	\end{align}
	
	We can see that the worst case loss is a perturbation of the squared loss with  small error terms $\zeta_i$.
	The next theorem, proved in Appendix \ref{app:dro_formulation}, illustrates the equivalence in Proposition~\ref{prop:equiv1} for the particular case \eqref{obser-model-2}.
	\begin{theorem}\label{thm:dro_formulation}
		Problem \eqref{prob:srpcp-obs} is equivalent to a DRO formulation of a squared loss problem
		\begin{equation}\label{op:robust}
			\begin{array}{cl}
				\displaystyle\inf_{L,S}\,\sup_{\xi_{jk},\zeta_{jk},(j,k)\in\Omega_{\rm obs}}\quad & \displaystyle \frac{1}{|\Omega_{\rm obs}|} \sum_{(j,k)\in\Omega_{\rm obs}}{\rm E}\Big[ \big((D- L- S)_{jk} + \langle \xi_{jk},L\rangle + \langle \zeta_{jk},S\rangle\big)^2\Big]
				\\[10pt]
				{\rm s.t.} & \displaystyle
				\sum_{(j,k)\in\Omega_{\rm obs}}{\rm E} \Big[\max\big\{\| \xi_{jk}\|^2,\| \zeta_{jk}\|_{\infty}^2/\lambda^2\big\}\Big] \leq \frac{1}{\mu^2},
			\end{array}
		\end{equation}
		where  $\xi_{jk},\zeta_{jk},(j,k)\in\Omega_{\rm obs}$ are random matrices in $\mathbb{R}^{n_1\times n_2}$.
	\end{theorem}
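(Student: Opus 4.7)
The plan is to specialize Proposition~\ref{prop:equiv1} to the partially-observed model \eqref{obser-model-2} and then reconcile the resulting expressions with \eqref{prob:srpcp-obs} and \eqref{op:robust} by a suitable choice of parameters. Concretely, for each $(j,k)\in\Omega_{\rm obs}$ I identify the sample as $X_{jk}=[e_j(n_1)e_k^T(n_2);\,e_j(n_1)e_k^T(n_2)]\in\Omega_0$ and $Y_{jk}=D_{jk}$, and write the decision variable as $B=[L;S]$. With $n:=|\Omega_{\rm obs}|$ this immediately gives $\langle X_{jk},B\rangle = L_{jk}+S_{jk}$ and $\|B\|_{\lozenge,\lambda}=\|L\|_*+\lambda\|S\|_1$, so that the empirical risk in Proposition~\ref{prop:equiv1} reads ${\rm E}_{\mathbb{P}_n}[\ell]=\tfrac{1}{n}\sum_{(j,k)\in\Omega_{\rm obs}}(D_{jk}-L_{jk}-S_{jk})^2$.

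For the right-hand side of \eqref{eq:DRO}, I would observe that $t\mapsto t^2$ is strictly increasing on $[0,\infty)$ and each summand inside the brace is nonnegative, so taking the square root preserves the minimizer. The resulting functional equals
\[
\tfrac{1}{\sqrt{n}}\sqrt{\sum_{(j,k)\in\Omega_{\rm obs}}(D_{jk}-L_{jk}-S_{jk})^2}+\sqrt{\delta}\,(\|L\|_*+\lambda\|S\|_1),
\]
which, after dividing through by the positive constant $\sqrt{\delta}$, coincides with the objective of \eqref{prob:srpcp-obs} provided $\mu=1/\sqrt{n\delta}$, equivalently $\delta=1/(n\mu^2)$. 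Hence the minimizers of \eqref{prob:srpcp-obs} agree with those of the DRO expression in Proposition~\ref{prop:equiv1} under this parameter correspondence.

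For the left-hand side of \eqref{eq:DRO} I would re-use the worst-case reformulation already unfolded in \eqref{eq:loss}. Split each perturbation $\zeta_i\in\mathbb{R}^{2n_1\times n_2}$ into blocks $[\xi_{jk};\eta_{jk}]$, so that $\langle\zeta_i,B\rangle=\langle\xi_{jk},L\rangle+\langle\eta_{jk},S\rangle$ and, by the dual-norm formula $\|B\|_{\blacklozenge,\lambda}=\max\{\|L\|,\|S\|_\infty/\lambda\}$, $\|\zeta_i\|_{\blacklozenge,\lambda}^2=\max\{\|\xi_{jk}\|^2,\,\|\eta_{jk}\|_\infty^2/\lambda^2\}$. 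Substituting $Y_i=D_{jk}$ and $\langle X_i,B\rangle=L_{jk}+S_{jk}$ into the last line of \eqref{eq:loss} reproduces the integrand of \eqref{op:robust}, and the constraint $\tfrac{1}{n}\sum_i {\rm E}[\|\zeta_i\|_{\blacklozenge,\lambda}^2]\le\delta$, multiplied by $n$ and combined with $n\delta=1/\mu^2$, yields the uncertainty-set constraint in \eqref{op:robust}.

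The main difficulty is not analytic but notational: three different normalizations coexist (the $1/n$ in the empirical measure, the square/square-root transform that passes between \eqref{eq:DRO} and the objective of \eqref{prob:srpcp-obs}, and the $\sqrt{\delta}$ rescaling of $\|\cdot\|_{\lozenge,\lambda}$), and they must be threaded through consistently so that $\delta$ is eliminated in favor of $\mu$ in both the worst-case objective and the radius of the Wasserstein ball. Once the correspondence $\delta=1/(n\mu^2)$ has been pinned down and the block decomposition of $\|\cdot\|_{\blacklozenge,\lambda}$ is spelled out, the equivalence is a direct substitution into Proposition~\ref{prop:equiv1} and requires no further estimates.
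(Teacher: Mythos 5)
Your proposal is correct and follows essentially the same route as the paper's proof in Appendix~\ref{app:dro_formulation}: specialize Proposition~\ref{prop:equiv1} to the empirical distribution supported on $\{(E_{jk},D_{jk})\}_{(j,k)\in\Omega_{\rm obs}}$, use the worst-case reformulation \eqref{eq:loss} with the block decomposition of the dual norm for the left-hand side, and eliminate $\delta$ via $\delta=1/(\mu^2|\Omega_{\rm obs}|)$. Your explicit remark that squaring preserves minimizers on nonnegative values is a minor addition the paper leaves implicit, but the argument is otherwise identical.
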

	
	Many studies, e.g., \cite{blanchet2019robust,shafieezadeh2015distributionally,gao2024wasserstein}, not only establish the fundamental connections between regularization problems and DRO problems, but also provide theoretical guidance on the selection of the ambiguity parameter $\delta$ and the regularization parameter $\mu$.	
	Although these results can not be directly applied to our models \eqref{prob:srpcp} and \eqref{op:robust} due to the absence of studies in the DRO literature on the specific loss function $c$ in \eqref{eq:def_c_dist}, they consistently demonstrate that the optimal choice of $\mu$ exhibits independence from the noise level, supporting the findings presented by %
	\cite{zhang2021square}.
	In our future studies, the appropriate selection of $\mu$ using the DRO framework will be further investigated.

	\section{An alternating minimization method for \eqref{prob:srpcp}}
	\label{sec:AltMin}
	As mentioned in the introduction, model \eqref{prob:srpcp} was solved via the two-block ADMM by %
	\cite{zhang2021square}.
	Specifically, by introducing slack variables $L_1$, $S_1$ and $Z$, %
	\cite{zhang2021square} rewrite model \eqref{prob:srpcp} into a linearly constrained optimization problem with a block separable objective \eqref{prob:srpcp_ADMM}.
	An important motivation for the introduction of these slack variables is to address the possible divergence of the multi-block ADMM \cite{chen2016direct}.
	However, as widely observed by \cite{boyd2011distributed}, this straightforward variable splitting technique, though it allows the direct application of the classic two-block ADMM algorithm, typically sacrifices computational efficiency. Meanwhile, it is widely acknowledged in the literature \cite{ghadimi2014optimal,xu2017accelerated,sabach2022faster}  that the performance of the ADMM heavily relies on the tuning of the involved penalty parameter.
	Although some simple general tuning rules appear in recent studies, e.g., \cite{wohlberg2017admm,tang2024self}, one generally cannot expect sound performance of using the ADMM for solving model \eqref{prob:srpcp_ADMM} by just mimicking these rules. Undoubtedly, heavy tuning is required, and most likely the tuning task is on a per-dataset basis.
	
	Here, we propose to solve model \eqref{prob:srpcp} directly via the alternating minimization (AltMin) method \cite{beck2015convergence,tseng2001convergence}. The detailed steps are given in Algorithm  \hyperref[algo:AltMin]{AltMin}. As one can observe, Algorithm  \hyperref[algo:AltMin]{AltMin} takes extremely simple updating rules, and more importantly, it is tuning-free.
	The main steps of Algorithm  \hyperref[algo:AltMin]{AltMin} are solving the nuclear norm and $\ell_1$ norm regularized subproblems \eqref{update-L} and \eqref{update-S}. We show in the next two subsections that these subproblems can be efficiently handled. Specifically,  these subproblems are shown to enjoy closed-form and easy-to-compute optimal solutions.
	We also conduct a comprehensive convergence analysis in subsection~\ref{subsec:convergence}.

	\begin{algorithm}[H] \label{algo:AltMin}
		\caption{{\bf AltMin}: An alternating minimization method for solving \eqref{prob:srpcp}}

		\vskip6pt
		\begin{algorithmic}
			\State \textbf{Initialization } Given the data matrix $D\in\mathbb{R}^{n_1\times n_2}$, parameters $\lambda>0$, $\mu>0$, choose any $L^0 \in \mathbb{R}^{n_1\times n_2}$.
			
			\For{ $i=0,1,\dots,$}
			
			\State \textbf{Step 1. } Compute
			\begin{equation}\label{update-S}
				S^{i+1} \in\arg\min_S \,\, \left\{  \lambda\|S\|_1 + \mu\|L^{i}+S-D\|_F \right\}.
			\end{equation}

			\State\textbf{Step 2. } Compute
			\begin{equation}\label{update-L}
				L^{i+1} \in\arg\min_L \,\, \left\{ \|L\|_* +  \mu\|L+S^{i+1}-D\|_F \right\}.
			\end{equation}
			\EndFor
		\end{algorithmic}
	\end{algorithm}

	\subsection{Explicit formula for updating $S$ in Algorithm  \hyperref[algo:AltMin]{AltMin}}\label{subsec:updateS}
	
	We first investigate the subproblem corresponding to $S$ in \eqref{update-S}. Though written in the matrix form, subproblem \eqref{update-S} can equivalently be rewritten in the following more abstract vector form:
	\begin{equation}\label{op:1}
		\min_{s\in\mathbb{R}^n} \,\, h(s):=\|s-a\|_2 + \tau \|s\|_1,
	\end{equation}
	where $a\in\mathbb{R}^n$ is the given data, and $\tau > 0$ is a given parameter. Note that $h$ is coercive, hence the optimal solution set to \eqref{op:1} is nonempty. Let $\bar{s}$ be an optimal solution to \eqref{op:1}. We first give some basic properties of $\bar{s}$ in Proposition~\ref{prop:1}. Roughly speaking, it states that the sign and order of $\bar{s}$ are ``consistent'' with those of $a$. See its proof in Appendix \ref{app:prop_1}.

	\begin{proposition}\label{prop:1}
		Let $\bar{s}$ be an optimal solution to \eqref{op:1}. For any $i\in[n]$, the following holds:
		\begin{itemize}
			\item[{\rm (i)}] If $a_i > 0$, then $0 \leq \bar{s}_i \leq a_i$;
			
			\item[{\rm (ii)}] If $a_i < 0$, then $a_i \leq \bar{s}_i \leq 0$;
			
			\item[{\rm (iii)}] If $a_i = 0$, then $\bar{s}_i = 0$.
		\end{itemize}
		
		Thus, it holds that
		\begin{itemize}
			\item[{\rm (iv)}]  $\tilde{s} \in \arg\min_{s\in\mathbb{R}^n} \,\, \big\{ \|s-|a|\|_2 + \tau \|s\|_1\big\}$ if and only if ${\rm sign}(a)\odot \tilde{s} \in \arg\min_{s\in\mathbb{R}^n} \,\, \big\{ \|s-a\|_2 + \tau \|s\|_1\big\}$;
			
			\item[{\rm (v)}]  $\tilde{s} \in \arg\min_{s\in\mathbb{R}^n} \,\, \big\{ \|s-Pa\|_2 + \tau \|s\|_1\big\}$ if and only if $P^T\tilde{s} \in \arg\min_{s\in\mathbb{R}^n} \,\, \big\{ \|s-a\|_2 + \tau \|s\|_1\big\}$, for any permutation matrix $P$.
			
		\end{itemize}
	\end{proposition}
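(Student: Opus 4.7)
The plan is to handle (i)--(iii) by direct coordinate-wise exchange arguments, and then derive (iv) and (v) as essentially routine consequences via invertible changes of variable. Throughout, I would use that the objective $h$ is separable across coordinates in its $\ell_1$ part, while its $\ell_2$ part is separable only inside the square.

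For (i), I would argue by contradiction. Suppose $a_i>0$ but $\bar s_i\notin[0,a_i]$. If $\bar s_i>a_i$, define $s'$ by replacing only the $i$-th coordinate of $\bar s$ by $a_i$; then $(s'_i-a_i)^2=0<(\bar s_i-a_i)^2$ while all other residual coordinates are unchanged, so $\|s'-a\|_2<\|\bar s-a\|_2$, and simultaneously $|s'_i|=a_i<\bar s_i=|\bar s_i|$ gives $\|s'\|_1<\|\bar s\|_1$; hence $h(s')<h(\bar s)$, contradicting optimality. If instead $\bar s_i<0$, replace the $i$-th coordinate of $\bar s$ by $0$; since $(\bar s_i-a_i)^2-a_i^2=\bar s_i^2-2\bar s_i a_i>0$ and $|\bar s_i|>0$, both the $\ell_2$ distance and the $\ell_1$ norm again strictly decrease. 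Part (ii) follows from (i) by noting that $s$ is optimal for data $a$ iff $-s$ is optimal for data $-a$ (both norms are even). For (iii), if $a_i=0$ and $\bar s_i\neq 0$, zeroing out the $i$-th coordinate strictly shrinks both $(s_i-0)^2$ and $|s_i|$, yielding the same contradiction.

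For (iv), set $\sigma:={\rm sign}(a)$ componentwise, so $\sigma_i\in\{-1,+1\}$ whenever $a_i\neq 0$. On any such index, $(\sigma\odot s-|a|)_i=\sigma_i(s_i-a_i)$, giving $|(s-a)_i|=|(\sigma\odot s-|a|)_i|$ and $|s_i|=|(\sigma\odot s)_i|$. On any index with $a_i=0$, part (iii) applied to \emph{both} problems (with data $a$ and with data $|a|$) forces the $i$-th coordinate of any minimizer on either side to vanish, which is consistent with $\sigma_i=0$ making $({\rm sign}(a)\odot\tilde s)_i=0$. Combining these observations coordinate by coordinate shows that $\tilde s\mapsto{\rm sign}(a)\odot\tilde s$ is a well-defined bijection between the two minimizer sets.

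For (v), I use that any permutation matrix $P$ is a linear isometry for both $\|\cdot\|_2$ and $\|\cdot\|_1$ and satisfies $P^TP=I$. Therefore
\begin{equation*}
\|s-Pa\|_2+\tau\|s\|_1 \;=\; \|P^T s-a\|_2+\tau\|P^T s\|_1,
\end{equation*}
so the substitution $u=P^T s$ (equivalently $s=Pu$) turns the problem with data $Pa$ into the problem with data $a$; since this is an invertible linear change of variables, it maps minimizers to minimizers, which is precisely (v). The argument is essentially routine coordinate-wise bookkeeping; the only mild subtlety I anticipate is in (iv), where one must invoke (iii) to dispose of indices with ${\rm sign}(a_i)=0$ before the claimed bijection can be read off, and this is the one place I would take care to state carefully.
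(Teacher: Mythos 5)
Your proof is correct and follows essentially the same route as the paper's: parts (i)--(iii) by replacing a single offending coordinate to strictly decrease both the $\ell_2$ residual and the $\ell_1$ term, and parts (iv)--(v) as consequences via the sign/permutation changes of variable. The extra care you take at indices with $a_i=0$ in (iv) is exactly the point the paper leaves implicit, so no gap to report.
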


	Proposition~\ref{prop:1} implies that we only need to focus on the case where the vector $a$ is nonnegative and sorted in descending order.
	Hence, without loss of generality, we make the following assumption on $a$:
	\begin{equation}\label{ass:1}
		a_1\geq a_2 \geq\dots\geq a_n\geq 0,\quad a\neq 0.
	\end{equation}
	Then, problem \eqref{op:1} is equivalent to
	\begin{equation}\label{op:2}
		\min_{s\in\mathbb{R}^n} \,\, \|s-a\|_2 + \tau \langle 1_n,s\rangle + \delta_{\mathbb{R}^n_+}(s),
	\end{equation}
	where $1_n\in\mathbb{R}^n$ is the vector of all ones.
	Next, by noting that  $\frac{1}{\sqrt{\|a\|_0}} \leq \frac{\|a\|_{\infty}}{\|a\|_2}$, where the equality holds if and only if all of the nonzero entries of $|a|$ are the same and equal to $\|a\|_{\infty}$, we derive in the following theorem the optimal solution to \eqref{op:1} under three different ranges of $\tau$, i.e.,
	\[ \tau \geq \frac{\|a\|_{\infty}}{\|a\|_2}, \quad 0 < \tau \leq \frac{1}{\sqrt{\|a\|_0}}, \, \mbox{ and } \, \frac{1}{\sqrt{\|a\|_0}} < \tau < \frac{\|a\|_{\infty}}{\|a\|_2}.\]

	\begin{theorem}\label{thm:1}
		Assume that the vector $a$ satisfies \eqref{ass:1}. It holds that:
		\begin{itemize}
			\item[{\rm (i)}]
			$\bar{s} = 0$ is optimal to \eqref{op:1} if and only if $\tau \geq \frac{\|a\|_{\infty}}{\|a\|_2}$;
			
			\item[{\rm (ii)}]
			$\bar{s} = a$ is optimal to \eqref{op:1} if and only if $0 < \tau \leq \frac{1}{\sqrt{\|a\|_0}}$;
			
			\item[{\rm (iii)}]
			suppose that $\tau \in (\frac{1}{\sqrt{\|a\|_0}}, \frac{\|a\|_{\infty}}{\|a\|_2})$.  Let $\bar{k}$ be the largest integer strictly less than $1/\tau^2$.
			Then $1\leq \bar{k}\leq \|a\|_0 -1 \leq n-1$, and there exists a unique $k\in\{1,2,\dots,\bar{k}\}$ such that
			\begin{equation}\label{search:k2}
				a_{k+1} \leq t_k:= \sqrt{\frac{a_{k+1}^2 + \dots + a_n^2}{\frac{1}{\tau^2}-k}} < a_k.
			\end{equation}
			Now,
			$
			\bar{s} := \max(a- t_k1_n,0) = [a_1- t_k;\dots;a_k- t_k;0;\dots;0]
			$
			is optimal to \eqref{op:1}.
		\end{itemize}
	\end{theorem}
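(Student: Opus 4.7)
The plan is to invoke first-order optimality for the equivalent formulation \eqref{op:2}, whose use without loss of generality is justified by Proposition~\ref{prop:1}. Writing $h(s) = \|s-a\|_2 + \tau\langle 1_n, s\rangle + \delta_{\mathbb{R}^n_+}(s)$, the condition $0\in\partial h(\bar s)$ unfolds as: there exist $u\in\partial\|\bar s-a\|_2$ (namely $u=(\bar s-a)/\|\bar s-a\|_2$ when $\bar s\neq a$, otherwise $u\in\mathcal{B}_2$) and $v\in\mathcal{N}_{\mathbb{R}^n_+}(\bar s)$ such that $u+\tau 1_n+v=0$. Each case will be established by exhibiting such a pair $(u,v)$; necessity and sufficiency both follow from convexity of $h$.

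For case~(i), substituting $\bar s=0$ gives $u=-a/\|a\|_2$, and the normal cone constraint $v_i=\tau-a_i/\|a\|_2\leq 0$, combined with the sorting of $a$, reduces to $\tau\geq\|a\|_\infty/\|a\|_2$. For case~(ii), substituting $\bar s=a$ forces $u_i=-\tau$ on the support of $a$ (where the normal cone is $\{0\}$) and permits $u_i=0$ off the support (absorbing $-\tau$ into $v_i\leq 0$); the requirement $\|u\|_2\leq 1$ then collapses to $\tau^2\|a\|_0\leq 1$.

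Case~(iii) uses the ansatz $\bar s_i=a_i-t$ for $i\leq k$ and $\bar s_i=0$ for $i>k$, with $k$ and $t>0$ to be determined. On the support the normal cone vanishes, so the first-order condition forces $t=\tau\|\bar s-a\|_2$; squaring and substituting $\|\bar s-a\|_2^2=kt^2+\sum_{i>k}a_i^2$ yields exactly the closed form $t=t_k$ in \eqref{search:k2}, which is well-defined precisely when $k<1/\tau^2$. Off the support, the inequality $v_i=a_i/\|\bar s-a\|_2-\tau\leq 0$ becomes $a_i\leq t_k$ for $i>k$, reducing via monotonicity of $a$ to $a_{k+1}\leq t_k$; positivity of $\bar s_k$ yields $t_k<a_k$.

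The main obstacle is existence and uniqueness of $k\in\{1,\dots,\bar k\}$ satisfying $a_{k+1}\leq t_k<a_k$, together with the stated bounds on $\bar k$. Squaring and rearranging the bracket shows it is equivalent to
\begin{equation*}
\psi(k)<\frac{1}{\tau^2}\leq\psi(k+1), \qquad \psi(j):=j+\frac{\sum_{i>j}a_i^2}{a_j^2} \text{ for } 1\leq j\leq\|a\|_0.
\end{equation*}
A direct calculation gives $\psi(j+1)-\psi(j)=\bigl(\sum_{i>j}a_i^2\bigr)\bigl(a_{j+1}^{-2}-a_j^{-2}\bigr)\geq 0$, so $\psi$ is nondecreasing. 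The endpoint evaluations $\psi(1)=\|a\|_2^2/\|a\|_\infty^2$ and $\psi(\|a\|_0)=\|a\|_0$ combine with the hypothesis $\tau\in(1/\sqrt{\|a\|_0},\|a\|_\infty/\|a\|_2)$ to give $\psi(1)<1/\tau^2<\psi(\|a\|_0)$. Choosing $k$ as the largest index in $\{1,\dots,\|a\|_0-1\}$ with $\psi(k)<1/\tau^2$ delivers existence; uniqueness follows from the strict inequality $\psi(k)<1/\tau^2$, which rules out plateaus of $\psi$ on the ``$<$'' side, and the containment $k\leq\bar k$ is automatic from $k\leq\psi(k)<1/\tau^2$. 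Finally, the same endpoint estimates give $1/\tau^2>1$ and $1/\tau^2<\|a\|_0$, hence $1\leq\bar k\leq\|a\|_0-1$.
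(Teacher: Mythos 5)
Your proposal is correct. Parts (i) and (ii) coincide with the paper's argument (checking $0\in\partial h$ at $0$ and at $a$ via the normal cone \eqref{normal-cone} and the unit ball $\mathcal{B}_2$), and your final verification of optimality in (iii) — forcing $t=\tau\|\bar s-a\|_2$ on the support and $a_i\le t_k$ off it — is also the same computation the paper performs. Where you genuinely diverge is the existence and uniqueness of $k$. The paper evaluates the bracket $a_{i+1}-t_i$ at the endpoints $i=0$ and $i=\bar k$, invokes a discrete sign change for existence, and then runs a separate proof by contradiction (producing an intermediate index $p$ with $a_p\le t_p<a_{p+1}$) for uniqueness. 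You instead rewrite the double inequality $a_{k+1}\le t_k<a_k$ as $\psi(k)<1/\tau^2\le\psi(k+1)$ for $\psi(j)=j+\bigl(\sum_{i>j}a_i^2\bigr)/a_j^2$, verify $\psi(j+1)-\psi(j)=\bigl(\sum_{i>j}a_i^2\bigr)\bigl(a_{j+1}^{-2}-a_j^{-2}\bigr)\ge0$, and read off existence, uniqueness, the bound $k\le\bar k$, and the bounds $1\le\bar k\le\|a\|_0-1$ all from the endpoint values $\psi(1)=\|a\|_2^2/\|a\|_\infty^2$ and $\psi(\|a\|_0)=\|a\|_0$. This monotone-threshold formulation is cleaner and packages existence and uniqueness into a single mechanism; the one point worth making explicit (which your restriction of $\psi$ to $j\le\|a\|_0$ implicitly handles) is that the equivalence $a_{k+1}\le t_k\iff 1/\tau^2\le\psi(k+1)$ uses $a_{k+1}>0$, guaranteed because $k+1\le\|a\|_0$.
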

	
	\begin{proof}
	Under the assumption on $a$, we know that  \eqref{op:1} is equivalent to \eqref{op:2}. It is sufficient to focus on \eqref{op:2}.
	We prove (i) by checking the optimality condition of \eqref{op:2} at $\bar{s}=0$:
	\[
	0\in -\frac{a}{\|a\|_2} + \tau 1_n + \mathcal{N}_{\mathbb{R}^n_+}(0)
	\overset{\eqref{normal-cone}}{\iff}
	0\in -\frac{a}{\|a\|_2} + \tau 1_n + \mathbb{R}^n_-
	\iff
	\tau - \frac{a_i}{\|a\|_2} \geq 0,\quad\forall\,i\in[n],
	\]
	which is further equivalent to $\tau \geq \frac{\|a\|_{\infty}}{\|a\|_2}.$
	For (ii), we can prove it similarly. Indeed, the optimality condition of \eqref{op:2} at $\bar{s}=a$
	is given by
	$
	0\in \mathcal{B}_2 + \tau 1_n + \mathcal{N}_{\mathbb{R}^n_+}(a),
	$
	which, by \eqref{normal-cone}, is equivalent to
	\[
	0\in \mathcal{B}_2 + \tau 1_n +  \left[0_r; \mathbb{R}^{n-r}_- \right]
	\iff
	\mathcal{B}_2 \cap \left\{v\in\mathbb{R}^n\mid v_i=\tau,i=1,\dots,r;v_i\leq \tau,i=r+1,\dots,n\right\} \neq \emptyset,
	\]
	with $r:=\|a\|_0$. Now, it is not difficult to observe that the last relation holds if and only if
	$
	[\tau 1_r; 0_{n-r}]\in\mathcal{B}_2
	$, i.e., $\sqrt{r} \tau \leq 1.$
	Now we turn to prove (iii), and start by showing the existence of $k$ in \eqref{search:k2}. By the definition of $\bar{k}$ and the assumption on $\tau$, it holds that
	\[0 < \frac{1}{\tau^2} - 1 \leq \bar{k} < \frac{1}{\tau^2}.\] Since $\tau > \frac{1}{\sqrt{r}}$, we know that $\bar{k} < r$, and thus $1 \le \bar{k} \leq r - 1 \leq n-1$.  Then, the range of $\tau$ implies that
	\begin{equation*}
		\begin{array}{rl}
			\displaystyle \Big[a_{i+1} - t_i \Big]\Big|_{i=0}
			& \displaystyle = a_1 - \sqrt{\frac{a_1^2+\cdots+a_n^2}{\frac{1}{\tau^2}}}
			=  \|a\|_{\infty} - \tau \|a\|_2 > 0,\\[0.5cm]
			\displaystyle \Big[a_{i+1} - t_i \Big]\Big|_{i=\bar{k}}
			& \displaystyle =a_{\bar{k}+1} - \sqrt{\frac{a_{\bar{k}+1}^2+\cdots+a_n^2}{\frac{1}{\tau^2}-\bar{k}}}
			\leq a_{\bar{k}+1} - \sqrt{a_{\bar{k}+1}^2+\cdots+a_n^2} \leq {0.}
		\end{array}
	\end{equation*}
	Therefore, there {exists}
	$k \in \{1,\dots,\bar{k}\}$ such that
	\begin{equation}\label{eq:pf_unique}
		a_k - t_{k-1}  > 0 \mbox{ and } a_{k+1} - t_k  \leq 0.
	\end{equation}
	It remains to show $t_k < a_k$ in \eqref{search:k2}.
	The inequality $ a_k - t_{k-1}  > 0$ and the definition of $t_{k-1}$ imply that
	\begin{equation}\label{eq:akgtk}
		a_k^2 > \frac{a_k^2}{\frac{1}{\tau^2}-k+1} + \frac{a_{k+1}^2+\cdots+a_n^2}{\frac{1}{\tau^2}-k+1}, \quad \mbox{or equivalently}, \quad a_k^2 > \frac{a_{k+1}^2+\cdots+a_n^2}{\frac{1}{\tau^2} - k} = t_k^2.
	\end{equation}
	Therefore, $a_k > t_k$ and the existence of $k$ in \eqref{search:k2} is proved.

	Now, we argue that such a $k$ is unique. In fact, we can show that $k$ satisfying \eqref{eq:pf_unique} is unique.
	To prove this by contradiction, suppose that there exist two distinct integers $k_1$ and $k_2$ such that $1\leq k_1 <  k_2\leq \bar{k}$, $a_{k_1} - t_{k_1-1}  > 0$, $ a_{k_1+1} - t_{k_1}  \leq 0$, $a_{k_2} - t_{k_2-1}  > 0$, and $ a_{k_2+1} - t_{k_2}  \leq 0$. Namely, we have that
	\begin{equation*}
		\Big[a_{i+1} - t_i \Big]\Big|_{i=k_1} \leq 0, \quad
		\Big[a_{i+1} - t_i \Big]\Big|_{i=k_2-1}  > 0.
	\end{equation*}
	Therefore, there exists $p\in\{ k_1+1,\dots,k_2-1\}$ such that $a_p - t_{p-1}  \leq 0$  and $ a_{p+1} - t_p  > 0$. Similar to \eqref{eq:akgtk}, we can deduce from $a_p  \leq t_{p-1}$ that $a_p  \leq t_p$. This further implies $a_p  \leq t_p < a_{p+1}$, which contradicts the sorted assumption  of $a$ in \eqref{ass:1}. Thus, we have shown the uniqueness of $k$.
	
	Lastly, we prove the optimality of $\bar s$ by  examining whether the following optimality condition holds at $\bar{s}$:
	\begin{equation}\label{opt-cond}
		0\in \frac{\bar{s} - a}{\|\bar{s} - a\|_2} + \tau 1_n + \mathcal{N}_{\mathbb{R}^n_+}(\bar{s})
		\overset{\eqref{normal-cone}}{=}\frac{\bar{s} - a}{\|\bar{s} - a\|_2} + \tau 1_n + [0_k; \mathbb{R}^{n-k}_-].
	\end{equation}
	By simple computations, we have that
	\[
	\|\bar{s} - a\|_2^2
	= k t_k^2 + a_{k+1}^2 + \dots + a_n^2
	= \frac{a_{k+1}^2 + \dots + a_n^2}{1-k\tau^2} = \frac{t_k^2}{\tau^2},
	\]
	and thus
	\[
	\frac{\bar{s} - a}{\|\bar{s} - a\|_2} + \tau 1_n
	= \left[0_k; \tau-\tau\frac{a_{k+1}}{t_k};\dots;  \tau-\tau\frac{a_{n}}{t_k}\right].
	\]
	Therefore, \eqref{opt-cond} holds due to the fact that
	$
	1-{a_{n}}/{t_k} \geq \dots \geq  1-{a_{k+1}}/{t_k}  \overset{\eqref{search:k2}}{\geq} 0.
	$
	\end{proof}

	\begin{remark}
		Following the above theorem, we have two trivial observations: 1) If $\tau \geq 1$, then $0$ is optimal to \eqref{op:1} (since $\|a\|_{\infty} \leq \|a\|_2$); 2) If $\tau \leq 1/\sqrt{n}$, then $a$ is optimal to \eqref{op:1} (since $\|a\|_0 \leq n$).
	\end{remark}
	
	In the following proposition, we show problem \eqref{op:1} admits a unique solution
	unless $\tau = 1/\sqrt{\|a\|_0}$. Its proof can be found in Appendix \ref{app:unique}.
	\begin{proposition}\label{pro:unique}
		Assume that the vector $a$ satisfies \eqref{ass:1}. Then,
		\begin{itemize}
			\item [{\rm (i)}] if $\tau \neq \frac{1}{\sqrt{\|a\|_0}}$, then problem \eqref{op:1}  has a unique optimal solution;
			
			\item [{\rm (ii)}] if  $\tau = \frac{1}{\sqrt{\|a\|_0}}$, then for any $\varepsilon$ satisfying $0\leq \varepsilon \leq \min\{a_i\,|\,a_i > 0\}$, $a^\varepsilon:= a - \varepsilon\,{\rm sign}(a) \odot 1_n$
			solves \eqref{op:1}.
		\end{itemize}
	\end{proposition}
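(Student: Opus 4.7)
The plan is to derive a structural characterization of all optima from the subgradient condition and then combine it with the case analysis in Theorem \ref{thm:1}. For any optimizer $\tilde s$ of \eqref{op:1} with $\tilde s \ne a$, the optimality condition reads $0 \in (\tilde s - a)/\|\tilde s - a\|_2 + \tau\,\partial\|\tilde s\|_1$. Setting $\sigma := \tau\|\tilde s - a\|_2 > 0$ and reading the condition coordinatewise (using $a \ge 0$ from \eqref{ass:1}), one first rules out every $\tilde s_i < 0$ (the signs on the two sides of the equation would disagree), and then concludes
\begin{equation*}
\tilde s = \max(a - \sigma\, 1_n,\,0), \qquad \sigma^{2}\bigl(\tfrac{1}{\tau^{2}} - k\bigr) = \sum_{i > k} a_i^{2},
\end{equation*}
where $k = \|\tilde s\|_0$; in particular $\sigma = t_k$ in the notation of \eqref{search:k2}. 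Moreover $\sigma$ must satisfy the bracketing $a_k > \sigma \ge a_{k+1}$ with the conventions $a_0 = +\infty$ and $a_{n+1} = 0$.

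For part (i), I would case-split along the regimes of Theorem \ref{thm:1}. In the regimes $\tau \ge \|a\|_\infty/\|a\|_2$ and $1/\sqrt{\|a\|_0} < \tau < \|a\|_\infty/\|a\|_2$, the unique bracketing index $k$ is identified by an argument parallel to the uniqueness step in the proof of Theorem \ref{thm:1}(iii) ($k = 0$ in the former regime, the index from Theorem \ref{thm:1}(iii) in the latter), so $\tilde s = \bar s$. In the regime $0 < \tau < 1/\sqrt{\|a\|_0}$ we have $\bar s = a$, and it suffices to exclude every $\tilde s \ne a$: for any $k < \|a\|_0$, the bracketing $t_k \ge a_{k+1} > 0$ combined with the elementary bound $\sum_{i > k} a_i^{2} \le (\|a\|_0 - k)\,a_{k+1}^{2}$ would force $\|a\|_0 - k \ge 1/\tau^{2} - k$, i.e., $\tau \ge 1/\sqrt{\|a\|_0}$, contradicting the range; whereas $k = \|a\|_0$ with $1/\tau^{2} - \|a\|_0 > 0$ forces $\sigma = 0$, against $\sigma > 0$.

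For part (ii), $\tau = 1/\sqrt{\|a\|_0}$ makes the factor $1/\tau^{2} - \|a\|_0$ vanish, so at $k = \|a\|_0$ the self-consistency equation collapses to $0 = 0$; any $\sigma \in [0,\,\min\{a_i : a_i > 0\}]$ is then admissible, and the associated $\max(a - \sigma\,1_n,\,0)$ is exactly $a^\varepsilon$ with $\varepsilon = \sigma$. A direct plug-in
\begin{equation*}
h(a^\varepsilon) = \varepsilon\sqrt{\|a\|_0} + \tau\bigl(\|a\|_1 - \varepsilon\|a\|_0\bigr) = \tau\|a\|_1 = h(a)
\end{equation*}
then certifies optimality. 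The main obstacle, as anticipated, lies in part (i) under $\tau < 1/\sqrt{\|a\|_0}$: since $\bar s - a = 0$, the usual strict-convexity shortcut for the Euclidean norm is unavailable, so one must rely entirely on the bracketing/self-consistency characterization to exclude every $k$, with the pigeonhole-style inequality above serving as the key ingredient.
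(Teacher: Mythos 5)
Your argument is correct, but it follows a genuinely different route from the paper's. For part (i) with $\tau > 1/\sqrt{\|a\|_0}$, the paper exploits convexity of the optimal set: the equality case of the triangle inequality for $\|\cdot\|_2$ forces $\bar{s}-a$ and $\hat{s}-a$ to be parallel for any two optima, and writing $\bar{s}-a = u(\hat{s}-a)$ yields $(1-u)(h_*-h(a))=0$, which pins $u=1$ because $a$ is not optimal in that regime; for $\tau < 1/\sqrt{\|a\|_0}$ it plugs the bound $\|x\|_2 \ge \|x\|_1/\sqrt{\|a\|_0}$, applied to $x = a-\hat{s}\ge 0$, directly into $h(\hat{s})=h(a)$. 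You instead extract from the subgradient condition a complete soft-thresholding description $\tilde{s}=\max(a-\sigma 1_n,0)$ with the self-consistency $\sigma^2(1/\tau^2-k)=\sum_{i>k}a_i^2$ and the bracketing $a_k>\sigma\ge a_{k+1}$, and show this system determines $\tilde{s}$ uniquely except when $\tau=1/\sqrt{\|a\|_0}$. Your route is longer but describes the entire optimal set in every regime (re-deriving Theorem \ref{thm:1} as a byproduct) and makes visible exactly why uniqueness degenerates at $\tau=1/\sqrt{\|a\|_0}$, namely the vanishing of the coefficient $1/\tau^2-\|a\|_0$; the paper's route is shorter precisely because convexity spares it the enumeration of candidate supports. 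One detail you should still spell out: in the regimes $\tau\ge\|a\|_\infty/\|a\|_2$ and $1/\sqrt{\|a\|_0}<\tau<\|a\|_\infty/\|a\|_2$, candidate indices $k$ with $1/\tau^2-k\le 0$ are not reached by the interleaving argument borrowed from Theorem \ref{thm:1}(iii) (there $t_k$ is undefined), but the self-consistency identity immediately forces either $\sigma=0$ or $\tau=1/\sqrt{k}$ with $k=\|a\|_0$, both of which are excluded. Part (ii) coincides with the paper's plug-in verification.
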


	\subsection{Explicit formula for updating $L$ in Algorithm  \hyperref[algo:AltMin]{AltMin}}\label{subsec:updateL}
	
	Now we focus on the subproblem for computing $L$ in \eqref{update-L}.
	For the later developments, \eqref{update-L} is written in the following abstract form:
	\begin{equation}\label{op:L}
		\min_L\,\, \|L - A\|_F + \rho\|L\|_*,
	\end{equation}
	where the matrix $A\in\mathbb{R}^{n_1\times n_2}$  and the parameter $\rho > 0$ are given. Based on the singular value decomposition, we derive in this subsection the optimal solution to \eqref{op:L}
	based on the results obtained in the previous subsection.
	
	For any nonnegative and nonzero vector $a\in\mathbb{R}^{n_2}$, the operator $d_{\rho}:\,\mathbb{R}^{n_2}\to\mathbb{R}^{n_2}$ is defined as:
	\begin{equation}\label{def-d}
		d_{\rho}(a) =
		\begin{cases}
			0, & \mbox{ if } \rho\geq \frac{\|a\|_{\infty}}{\|a\|_2},\\
			\arg\min_s \,\, \{\|s-a\|_2 + \rho \|s\|_1 \}, & \mbox{ if }  \frac{1}{\sqrt{\|a\|_0}} < \rho < \frac{\|a\|_{\infty}}{\|a\|_2},\\
			a, & \mbox{ if } \rho \leq \frac{1}{\sqrt{\|a\|_0}}.
		\end{cases}
	\end{equation}
	
	\begin{proposition}\label{pro:diagA}
		Assume that $A =
		\begin{pmatrix}
			{\rm Diag}(\sigma)  \\
			0
		\end{pmatrix}
		$
		with $\sigma:=(\sigma_1,\dots,\sigma_{n_2})^T$ and   $\sigma_1 \geq \dots \geq \sigma_{n_2} \ge 0$.
		Then
		$\begin{pmatrix}
			{\rm Diag}(d_{\rho}(\sigma))  \\
			0
		\end{pmatrix}$
		is optimal to \eqref{op:L}.
	\end{proposition}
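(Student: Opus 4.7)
The plan is to reduce Proposition~\ref{pro:diagA} to the vector problem~\eqref{op:1}, which has already been solved in Theorem~\ref{thm:1}, by exploiting the unitary invariance of both norms appearing in~\eqref{op:L}. Specifically, I would invoke Mirsky's inequality for singular values: for any $L\in\mathbb{R}^{n_1\times n_2}$,
\[
\|L-A\|_F \;\ge\; \|\sigma(L)-\sigma(A)\|_2,
\]
where $\sigma(L),\sigma(A)\in\mathbb{R}^{n_2}_+$ denote the singular-value vectors arranged in nonincreasing order (padded with zeros to length $n_2$ if necessary). Combining this with the identity $\|L\|_* = \|\sigma(L)\|_1$ and the hypothesis $\sigma(A)=\sigma$, I obtain the lower bound
\[
\|L-A\|_F + \rho\|L\|_* \;\ge\; \|\sigma(L)-\sigma\|_2 + \rho\|\sigma(L)\|_1
\]
on the objective of~\eqref{op:L}. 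Since $\sigma(L)\in\mathbb{R}^{n_2}_+$ and $\sigma$ satisfies assumption~\eqref{ass:1}, the right-hand side is bounded below by the optimal value of problem~\eqref{op:1} with data $a=\sigma$ and parameter $\tau=\rho$.

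The second step is to exhibit an $L$ attaining this lower bound. The natural candidate is $L^\star := \begin{pmatrix}{\rm Diag}(d_\rho(\sigma))\\ 0\end{pmatrix}$. Because $L^\star$ and $A$ are ``diagonal'' in the same canonical bases, equality holds in Mirsky's inequality, giving $\|L^\star - A\|_F = \|d_\rho(\sigma)-\sigma\|_2$ and $\|L^\star\|_* = \|d_\rho(\sigma)\|_1$. It then suffices to verify that $d_\rho(\sigma)$ is itself a minimizer of~\eqref{op:1}. The three branches in the definition~\eqref{def-d} correspond precisely to the three cases of Theorem~\ref{thm:1}: $d_\rho(\sigma)=0$ when $\rho\ge\|\sigma\|_\infty/\|\sigma\|_2$, $d_\rho(\sigma)=\sigma$ when $\rho\le 1/\sqrt{\|\sigma\|_0}$, and $d_\rho(\sigma)$ equals the thresholded vector described in Theorem~\ref{thm:1}(iii) in the intermediate regime. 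Moreover, Proposition~\ref{prop:1}(i)--(iii) guarantees that any minimizer of~\eqref{op:1} lies in $\mathbb{R}^{n_2}_+$, so the nonnegativity constraint implicit in $\sigma(L)\in\mathbb{R}^{n_2}_+$ does not raise the optimal value, and the lower bound above is tight.

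There is no deep obstacle, only a mild technical subtlety at the boundary values $\rho\in\{\|\sigma\|_\infty/\|\sigma\|_2,\,1/\sqrt{\|\sigma\|_0}\}$: by Proposition~\ref{pro:unique}, uniqueness of the minimizer of~\eqref{op:1} may fail at $\rho = 1/\sqrt{\|\sigma\|_0}$. This does not harm the argument, since Proposition~\ref{pro:diagA} asserts only that the exhibited $L^\star$ is \emph{an} optimal solution, and the ``$\leq$'' conventions built into~\eqref{def-d} are compatible with either choice of representative. The remaining care is simply to cite Mirsky's inequality in the correct rectangular form and to check that the sorted-nonnegative hypothesis on $\sigma$ allows Theorem~\ref{thm:1} to be invoked directly, without any sign-flip or permutation reduction from Proposition~\ref{prop:1}(iv)--(v).
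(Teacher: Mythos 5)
Your proof is correct, but it takes a genuinely different route from the paper's. The paper proves Proposition~\ref{pro:diagA} by directly verifying the first-order optimality condition of \eqref{op:L} at the candidate $\overline{L}=\bigl(\begin{smallmatrix}{\rm Diag}(d_\rho(\sigma))\\ 0\end{smallmatrix}\bigr)$ in each of the three regimes of $\rho$, using the explicit description of $\partial\|\cdot\|_*$ and the subgradient of the Frobenius norm; the intermediate case reuses the quantity $t_k$ from \eqref{search:k2} to exhibit a common subgradient. You instead run a lower-bound-and-attainment argument: Mirsky's singular-value perturbation inequality $\|L-A\|_F\ge\|\sigma(L)-\sigma(A)\|_2$ together with $\|L\|_*=\|\sigma(L)\|_1$ bounds the matrix objective below by the vector objective of \eqref{op:1} with $a=\sigma$, $\tau=\rho$, and the diagonal candidate attains that bound because $d_\rho(\sigma)$ is optimal for \eqref{op:1} by Theorem~\ref{thm:1}. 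Your argument is valid (the lower bound needs no sortedness or nonnegativity of $\sigma(L)$ since \eqref{op:1} is unconstrained, so your closing caveats are harmless but unnecessary), and it is in fact exactly the technique the paper deploys later for Proposition~\ref{prop:optimal_UV}, where the same inequality is cited as Corollary~7.3.5(b) of \cite{horn2012matrix}. What each approach buys: yours avoids any nuclear-norm subgradient calculus and cleanly recycles the vector result, making the matrix case an immediate corollary; the paper's subgradient verification is more self-contained (no appeal to Mirsky) and doubles as a certificate of optimality that generalizes to checking approximate stationarity. The only point to be careful about is the degenerate case $\sigma=0$, where $d_\rho$ is not defined by \eqref{def-d} and Theorem~\ref{thm:1} does not apply; this case is trivial ($L=0$ is optimal) but should be excluded explicitly, as assumption \eqref{ass:1} requires $a\neq 0$.
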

	Its proof is documented in Appendix \ref{app:diagA}.
	In the next theorem, we consider the general case.
	
	\begin{theorem}\label{thm:update_L}
		Let $A = U\Sigma V^T$ be a singular value decomposition, where $U \in \mathbb{R}^{n_1\times n_1}$ and $V\in\mathbb{R}^{n_2\times n_2}$ are orthogonal matrices, $\Sigma =
		\begin{pmatrix}
			{\rm Diag}(\sigma) \\
			0
		\end{pmatrix} \in \mathbb{R}^{n_1\times n_2}
		$
		with $\sigma:=(\sigma_1,\dots,\sigma_{n_2})^T$,  $\sigma_1 \geq \dots \geq \sigma_{n_2} \ge 0$. Then $U
		\begin{pmatrix}
			{\rm Diag}(d_{\rho}(\sigma)) \\
			0
		\end{pmatrix}
		V^T$   
		is optimal to \eqref{op:L}, where $d_{\rho}$ is defined by \eqref{def-d}.
	\end{theorem}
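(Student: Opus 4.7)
The proof is essentially a change-of-variables argument that reduces the general case to the diagonal case already handled in Proposition~\ref{pro:diagA}. The plan is to exploit the bi-orthogonal invariance of both the Frobenius norm and the nuclear norm to rotate the problem into the coordinate system of the singular vectors of $A$, and then invoke Proposition~\ref{pro:diagA} directly.

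More precisely, I will introduce the bijective change of variables $W := U^T L V \in \mathbb{R}^{n_1 \times n_2}$, so that $L = UWV^T$. Since $U$ and $V$ are orthogonal, I have $\|L-A\|_F = \|U^T(L-A)V\|_F = \|W - \Sigma\|_F$ and $\|L\|_* = \|U W V^T\|_* = \|W\|_*$. Consequently, the objective values are preserved:
\begin{equation*}
\|L - A\|_F + \rho \|L\|_* = \|W - \Sigma\|_F + \rho \|W\|_*,
\end{equation*}
so $L^\star$ is optimal for \eqref{op:L} if and only if $W^\star = U^T L^\star V$ is optimal for $\min_W \|W - \Sigma\|_F + \rho \|W\|_*$.

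Now I apply Proposition~\ref{pro:diagA} to the matrix $\Sigma = \begin{pmatrix} \mathrm{Diag}(\sigma) \\ 0 \end{pmatrix}$, whose structure exactly matches the hypothesis of that proposition (nonnegative diagonal entries sorted in descending order). This yields that $W^\star = \begin{pmatrix} \mathrm{Diag}(d_\rho(\sigma)) \\ 0 \end{pmatrix}$ is optimal for the transformed problem. Pulling back via $L^\star = U W^\star V^T$ then gives the desired expression
\begin{equation*}
L^\star \;=\; U \begin{pmatrix} \mathrm{Diag}(d_\rho(\sigma)) \\ 0 \end{pmatrix} V^T,
\end{equation*}
completing the proof.

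There is no real obstacle here: the only point requiring a moment of care is verifying that the orthogonal change of variables preserves both norms simultaneously (which is standard) and that the block structure of $\Sigma$ matches the hypothesis of Proposition~\ref{pro:diagA} so it can be invoked verbatim. The substantive content of the theorem is thus already contained in Proposition~\ref{pro:diagA} and, through it, in Theorem~\ref{thm:1}; the role of the SVD is purely to diagonalize the data matrix so that the vector-level analysis of Section~\ref{subsec:updateS} applies to the singular values.
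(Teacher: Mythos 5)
Your proposal is correct and follows exactly the same route as the paper: the paper's proof is a one-line appeal to the orthogonal invariance of $\|\cdot\|_F$ and $\|\cdot\|_*$ together with Proposition~\ref{pro:diagA}, which is precisely the change-of-variables argument you spell out. Your version simply makes the reduction $W = U^T L V$ explicit.
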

	\begin{proof}
	The desired result follows directly from the orthogonal invariance of  $\|\cdot\|_F$ and $\|\cdot\|_*$ and Proposition \ref{pro:diagA}.
	\end{proof}

	\subsection{Convergence analysis}\label{subsec:convergence}
	In this subsection, we study the convergence properties of the proposed method. There is a vast literature on the convergence analysis of the alternating minimization method. We refer the readers to the work of \cite{tseng2001convergence} for a comprehensive review. Here, we establish the convergence of Algorithm  \hyperref[algo:AltMin]{AltMin} for solving \eqref{prob:srpcp} based on results of \cite{tseng2001convergence}.
	
	We start by introducing relevant definitions, which are mainly taken from \cite{tseng2001convergence}.
	Let $\mathcal{X},\mathcal{X}_1,\mathcal{X}_2$ be finite dimensional Euclidean spaces.
	For any extended real valued function $h:\,\mathcal{X} \to (-\infty,+\infty]$, the effective domain of $h$ is denoted as ${\rm dom} \, h$.
	For any $x\in{\rm dom}\,h$ and any $d\in\mathcal{X}$, the (lower) directional derivative of $h$ at $x$ in the direction $d$ is defined as
	$ h'(x;d)=\lim\inf_{t \downarrow 0}\, [h(x+td) - h(d)]/t. $
	Point $z$ is said to be a stationary point of $h$ if $z\in{\rm dom}\,h$ and $h'(z;d)\geq 0$ for all $d$.
	For any $f:\,\mathcal{X}_1\times\mathcal{X}_2 \to (-\infty,+\infty]$, %
	$(x_1,x_2)$ is said to be a coordinatewise minimum point of $f$ if $(x_1,x_2)\in{\rm dom}\,f$, $f(x_1+d_1,x_2) \geq f(x_1,x_2)$, and $f(x_1,x_2+d_2) \geq f(x_1,x_2)$,  for all $d_1\in\mathcal{X}_1$ and $d_2\in \mathcal{X}_2$.
	The function $f$ is said to be regular at $(x_1,x_2)$ if $f'((x_1,x_2);(d_1,d_2)) \geq 0$ whenever $f'((x_1,x_2);(d_1,0)) \geq 0 $ and $ f'((x_1,x_2);(0,d_2)) \geq 0$, for all $d_1\in\mathcal{X}_1$ and $d_2\in\mathcal{X}_2$.
	We know from Section~3 of \cite{tseng2001convergence} that a coordinatewise minimum point $(x_1,x_2)$ of $f$ is a stationary point of $f$ whenever $f$ is regular at $(x_1,x_2)$.
	For \eqref{prob:srpcp}, we denote $f_0:\,\mathbb{R}^{n_1\times n_2}\times \mathbb{R}^{n_1\times n_2} \to \mathbb{R}$ by $$
	f_0(L,S):=\mu\|L+S-D\|_F,\,\,(L,S)\in\mathbb{R}^{n_1\times n_2}\times\mathbb{R}^{n_1\times n_2}.
	$$
	We further denote $f_1:\,\mathbb{R}^{n_1\times n_2} \to \mathbb{R}$ by $f_1(\cdot):=\|\cdot\|_*$ and $f_2:\,\mathbb{R}^{n_1\times n_2} \to \mathbb{R}$ by $f_2(\cdot):=\lambda\|\cdot\|_1$, and $f:\,\mathbb{R}^{n_1\times n_2}\times \mathbb{R}^{n_1\times n_2} \to \mathbb{R}$ by
	\begin{equation}\label{def-f}
		f(L,S):=f_0(L,S) + f_1(L) + f_2(S),\,\,(L,S)\in\mathbb{R}^{n_1\times n_2}\times\mathbb{R}^{n_1\times n_2}.
	\end{equation}

	\begin{proposition}\label{prop:9}
		The function $f$ defined by \eqref{def-f} is regular at $(L,S)$ whenever $L + S \neq D$.
	\end{proposition}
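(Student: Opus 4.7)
The plan is to exploit the structure $f = f_0 + f_1 + f_2$, where $f_0$ couples $L$ and $S$ but is smooth on the relevant region, while $f_1,f_2$ depend on only one block each. Under the hypothesis $L+S \neq D$, the map $(L,S)\mapsto\|L+S-D\|_F$ is the composition of the linear map $(L,S)\mapsto L+S-D$ with the Frobenius norm evaluated away from the origin; since $\|\cdot\|_F$ is continuously differentiable on $\mathbb{R}^{n_1\times n_2}\setminus\{0\}$, this shows $f_0$ is continuously differentiable at $(L,S)$, with gradient $\nabla f_0(L,S) = \mu\,(L+S-D)/\|L+S-D\|_F\cdot(I,I)$ (both blocks equal).

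Next, I would compute the one-sided directional derivatives. Because $f_0$ is differentiable at $(L,S)$, for any direction $(d_1,d_2)$ the chain rule gives
\begin{equation*}
f_0'((L,S);(d_1,d_2)) = \langle \nabla_L f_0(L,S), d_1\rangle + \langle \nabla_S f_0(L,S), d_2\rangle.
\end{equation*}
The convex functions $f_1$ and $f_2$ admit finite directional derivatives in every direction, so by the sum rule for directional derivatives (which holds whenever at least one summand is differentiable),
\begin{equation*}
f'((L,S);(d_1,d_2)) = \langle \nabla_L f_0(L,S), d_1\rangle + f_1'(L;d_1) + \langle \nabla_S f_0(L,S), d_2\rangle + f_2'(S;d_2).
\end{equation*}

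The key observation is that this expression splits additively across the two blocks: setting $d_2=0$ or $d_1=0$ in the formula yields $f'((L,S);(d_1,0))$ and $f'((L,S);(0,d_2))$, respectively (noting $f_2'(S;0)=0$ and $f_1'(L;0)=0$), so
\begin{equation*}
f'((L,S);(d_1,d_2)) = f'((L,S);(d_1,0)) + f'((L,S);(0,d_2)).
\end{equation*}
From this identity, regularity is immediate: whenever both block-wise directional derivatives are nonnegative, their sum is nonnegative.

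I do not anticipate a serious obstacle. The only subtle point is justifying the additive splitting of the directional derivative for the nonsmooth but separable part $f_1(L)+f_2(S)$ combined with the smooth coupling $f_0$; this rests on the differentiability of $f_0$ at $(L,S)$, which is where the hypothesis $L+S\neq D$ is used (on the complement, $\|\cdot\|_F$ is merely subdifferentiable at the origin and the clean split above fails). This mirrors the standard regularity criterion in Section~4 of \cite{tseng2001convergence} for objectives that are a smooth coupling plus block-separable terms.
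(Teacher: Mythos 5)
Your proposal is correct and follows essentially the same route as the paper: both arguments hinge on the differentiability of $f_0$ at points with $L+S\neq D$, which makes the directional derivative of $f$ split additively across the two blocks (the separable terms $f_1(L)+f_2(S)$ split trivially), after which regularity is immediate. The paper compresses this into the single identity \eqref{eq:pf1} obtained by ``simple calculations,'' while you spell out the chain rule and the sum rule explicitly; the content is the same.
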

	\begin{proof}
	Take any $(L,S)$ satisfying $L + S \neq D$. It follows from simple calculations that
	\begin{equation}\label{eq:pf1}
		f_0'((L,S);(d_L,d_S)) = f_0'((L,S);(d_L,0)) + f_0'((L,S);(0,d_S)),\,\,\forall\,d_L,d_S,
	\end{equation}
	and thus $f'((L,S);(d_L,d_S)) = f'((L,S);(d_L,0)) + f'((L,S);(0,d_S)),\,\,\forall\,d_L,d_S$.
	\end{proof}
	
	The proof of the following convergence theorem closely follows the approach in  Theorem~4.1(c) of \cite{tseng2001convergence}. For the sake of completeness, we include it in Appendix \ref{app:convergence}.
	\begin{theorem}\label{thm:convergence}
		The sequence $\{(L^i,S^i)\}$ generated by Algorithm~ \hyperref[algo:AltMin]{AltMin} is bounded, and every cluster point $(\overline{L},\overline{S})$ of $\{(L^i,S^i)\}$ is a coordinatewise minimum point of $f$. In addition, if $f$ is non-overfitting at $(\overline{L},\overline{S})$, i.e., $\overline{L} + \overline{S} \neq D$, then $(\overline{L},\overline{S})$ is a minimum point of $f$, i.e., an optimal solution to \eqref{prob:srpcp}.
	\end{theorem}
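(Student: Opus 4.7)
The plan is to establish boundedness of the iterates, then characterize any cluster point as a coordinatewise minimum via a squeeze argument on objective values, and finally invoke Proposition~\ref{prop:9} together with convexity of $f$ to promote this to global optimality under the non-overfitting condition.

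First I would verify that $f$ is continuous and coercive on $\mathbb{R}^{n_1\times n_2}\times\mathbb{R}^{n_1\times n_2}$: all three summands are continuous, and equivalence of norms on finite-dimensional spaces forces $f(L,S)\to\infty$ whenever $\|(L,S)\|_F\to\infty$. The exact minimizations in \eqref{update-S} and \eqref{update-L} produce the monotone chain
\begin{equation*}
f(L^{i+1},S^{i+1})\le f(L^{i},S^{i+1})\le f(L^{i},S^{i}),
\end{equation*}
so the iterates stay in the bounded sublevel set $\{f\le f(L^0,S^1)\}$ and $\{f(L^i,S^i)\}$ decreases to some limit $f^\ast\ge 0$.

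Next I would fix a convergent subsequence $(L^{i_k},S^{i_k})\to(\overline{L},\overline{S})$, with $f(\overline{L},\overline{S})=f^\ast$ by continuity. For the $L$-direction, the optimality of $L^{i_k}$ gives $f(L^{i_k},S^{i_k})\le f(L,S^{i_k})$ for every $L$; sending $k\to\infty$ and using continuity in $S$ yields $f(\overline{L},\overline{S})\le f(L,\overline{S})$ for all $L$. For the $S$-direction, the monotone chain sandwiches $f(L^{i},S^{i+1})$ between two sequences both tending to $f^\ast$, hence $f(L^{i_k},S^{i_k+1})\to f^\ast$; combining this with the optimality $f(L^{i_k},S^{i_k+1})\le f(L^{i_k},S)$ and continuity in $L$ gives $f^\ast\le f(\overline{L},S)$ for every $S$. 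Together these show that $(\overline{L},\overline{S})$ is a coordinatewise minimum of $f$. Under the additional assumption $\overline{L}+\overline{S}\ne D$, Proposition~\ref{prop:9} certifies regularity of $f$ at $(\overline{L},\overline{S})$; by the characterization recalled just before that proposition (from \cite{tseng2001convergence}), the coordinatewise minimum is therefore a stationary point, and convexity of $f$ promotes stationarity to global optimality.

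The main obstacle I anticipate lies in the coordinatewise minimum step: the block minimizers in \eqref{update-S} and \eqref{update-L} need not be unique, so one cannot assume that $S^{i_k+1}$ and $L^{i_k+1}$ converge to $\overline{S}$ and $\overline{L}$ and invoke their defining optimality at the limit directly. Routing the argument through sandwiched objective values, rather than through subsequential limits of the block iterates themselves, is precisely what bypasses this obstacle and yields the coordinatewise minimum conclusion without any uniqueness hypothesis.
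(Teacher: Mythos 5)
Your proposal is correct and follows essentially the same route as the paper's proof: monotone decrease of $f$ plus coercivity for boundedness, exact block minimization plus continuity of $f$ for coordinatewise minimality at any cluster point, and Proposition~\ref{prop:9} (regularity) combined with convexity for the final claim. The only difference is cosmetic: you obtain the two coordinatewise inequalities from the sandwiched values $f(L^{i},S^{i+1})\to f^{\ast}$ and the fact that $L^{i}$ itself minimizes $f(\cdot,S^{i})$, whereas the paper extracts a further subsequence along which the next block iterate converges to some $\widehat{L}$ and passes through $f(\widehat{L},\overline{S})=f(\overline{L},\overline{S})$; both devices correctly dispose of the non-uniqueness issue you flag.
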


	\begin{remark}
		We note that the non-overfitting assumption in Theorem \ref{thm:convergence} arises naturally from the inherent random noise \(Z_0\) in the observation model \eqref{obser-model}. Under the standard identifiability assumptions of \cite{candes2011robust} and \cite{zhou2010stable},
		\cite{zhang2021square} demonstrate both theoretically and numerically that the optimal solution \((\overline{L}, \overline{S})\) to \eqref{prob:uspcp}, with \(\lambda = 1/\sqrt{n_1}\) and \(\mu = \sqrt{n_2/2}\), provides a good approximation of the ground truth \((L_0, S_0)\) in model \eqref{obser-model}. Consequently, it is safe to assume that \(\|D - \overline{L} - \overline{S}\|_F\) is  not zero, given the presence of $Z_0$.		
	\end{remark}
	
	\section{Acceleration of the AltMin method via the BM-decomposition}
	\label{sec:acc}
	From Theorem \ref{thm:update_L} in Section~\ref{subsec:updateL}, we observe that computing $L^i$ at each iteration of Algorithm  \hyperref[algo:AltMin]{AltMin}, requires performing a full singular value decomposition (SVD). This decomposition can be computationally expensive, especially under the high-dimensional setting. In this section, we propose to alleviate this computational burden by leveraging the low-rankness of the iterates $\{L^i\}$ enforced by the nuclear norm regularizer. Particularly, we propose to employ the Burer-Monteiro (BM) decomposition \cite{Burer2003nonlinear,Burer2005local} to represent a low-rank matrix. By combining with the  variational formulation of the nuclear norm  \cite{rennie2005fast}, this results in a smooth nonconvex reformulation of problem \eqref{op:L}. This reformulation enables us to replace full SVD with partial SVD across the iterations, which significantly reduces the computational costs especially for large-scale problems.
	Recent advances in GPU technology have enabled GPU acceleration for partial SVD computation, see, e.g., \cite{struski2024efficient}, which represents a promising direction for our future research work.
	
	We first recall the variational formulation \cite{recht2010guaranteed,rennie2005fast} of the nuclear norm in the following lemma.
	\begin{lemma}%
		\label{lemma:variational nuclear norm}
		Given any $X\in\mathbb{R}^{n_1\times n_2}$, we have
		\begin{equation}
			\label{eq:variational nuclear norm}
			\|X\|_*=\min_{U,V\atop X=UV^T}\|U\|_F\|V\|_F=\min_{U,V\atop X=UV^T}\frac12\left(\|U\|_F^2+\|V\|_F^2\right).
		\end{equation}
		Let the SVD of $X$ be
		\begin{equation}
			X= H \Sigma W^T=\sum_{i=1}^r \sigma_ih_iw_i^T.
		\end{equation}
		Here $r={\rm rank}(X)$, $\Sigma=\operatorname{Diag}(\sigma_1,\dots,\sigma_r)$ and $\sigma_i>0$ are the singular values. The matrices $H=[h_1,\dots,h_r]\in\mathbb{R}^{n_1\times r}$ and $W=[w_1,\dots,w_r]\in\mathbb{R}^{n_2\times r}$ are orthonormal. Then, the minima of the right rand of \eqref{eq:variational nuclear norm} are exactly those
		\begin{equation}
			U^*:=\left[\sqrt{\sigma_{\tau(1)}}h_{\tau(1)},\dots,\sqrt{\sigma_{\tau(r)}}h_{\tau(r)}\right],\quad V^*:=\left[\sqrt{\sigma_{\tau(1)}}w_{\tau(1)},\dots,\sqrt{\sigma_{\tau(r)}}w_{\tau(r)}\right],
		\end{equation}
		where $\tau$ is an arbitrary permutation of $\{1,\dots,r\}$.
	\end{lemma}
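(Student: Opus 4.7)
The plan is to split the lemma into three pieces: (a) the two minima on the right-hand side of \eqref{eq:variational nuclear norm} coincide, (b) they both equal $\|X\|_*$, and (c) the minimizers take the stated form. For piece (a), I would apply the arithmetic–geometric mean inequality $\|U\|_F\|V\|_F \le \tfrac12(\|U\|_F^2+\|V\|_F^2)$ with equality iff $\|U\|_F = \|V\|_F$. Since the feasible set $\{(U,V):X=UV^T\}$ is invariant under the rescaling $(U,V)\mapsto (\alpha U,\alpha^{-1}V)$ for any $\alpha>0$, any feasible pair can be rebalanced to satisfy $\|U\|_F=\|V\|_F$ without changing $UV^T$ or the product $\|U\|_F\|V\|_F$. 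Hence the two minima agree.

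For piece (b), I would obtain the upper bound by exhibiting the explicit feasible pair built from the SVD: with $U^*=H\Sigma^{1/2}$ and $V^*=W\Sigma^{1/2}$, one checks $U^*V^{*T}=H\Sigma W^T=X$ and $\|U^*\|_F^2=\|V^*\|_F^2=\operatorname{tr}(\Sigma)=\|X\|_*$, so $\tfrac12(\|U^*\|_F^2+\|V^*\|_F^2)=\|X\|_*$. The matching lower bound follows from the dual characterization $\|X\|_*=\sup_{\|Y\|\le 1}\langle X,Y\rangle$: for any feasible $(U,V)$,
\[
\langle X,Y\rangle = \langle UV^T,Y\rangle = \langle U,YV\rangle \le \|U\|_F\,\|YV\|_F \le \|U\|_F\,\|Y\|\,\|V\|_F \le \|U\|_F\|V\|_F,
\]
using Cauchy–Schwarz and the Frobenius/spectral compatibility $\|YV\|_F\le\|Y\|\|V\|_F$. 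Taking the supremum over $Y$ yields $\|X\|_*\le\|U\|_F\|V\|_F$, and piece (b) follows.

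For piece (c), I would trace the equality conditions in the chain above, picking the dual optimizer $Y^*=HW^T$ (so $\|Y^*\|=1$ and $\langle X,Y^*\rangle=\|X\|_*$). Equality in the Cauchy–Schwarz step forces $U=cY^*V$ for some $c>0$; equality in $\|Y^*V\|_F=\|V\|_F$ forces every column of $V$ to lie in the range of $W$ and to be fixed (up to sign) by $Y^{*T}Y^*=WW^T$; and the AM–GM equality gives $\|U\|_F=\|V\|_F$, i.e.\ $c=1$. Writing $V=W\Lambda$ with $\Lambda\in\mathbb{R}^{r\times r}$, the constraint $UV^T=H\Sigma^{1/2}\cdot\Sigma^{1/2}W^T$ combined with $U=Y^*V=H\Lambda$ becomes $H\Lambda\Lambda^T W^T = H\Sigma W^T$, so $\Lambda\Lambda^T=\Sigma$. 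Since the columns of $\Sigma^{1/2}$-scaled left and right singular vectors must line up (as $U$ and $V$ pair column-by-column to reconstruct the rank-$1$ summands $\sigma_i h_iw_i^T$), the only solutions to $\Lambda\Lambda^T=\Sigma$ that preserve this pairing are $\Lambda=\Sigma^{1/2}P$ for a permutation $P$, giving the stated $U^*,V^*$.

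The main obstacle is piece (c): bookkeeping the equality cases rigorously and ruling out spurious minimizers. The chain of inequalities is tight only when very rigid structural conditions hold, and when $X$ has repeated singular values there is additional rotational freedom within each eigenspace of $\Sigma$, so the statement "exactly those" in the lemma should be read modulo the nonuniqueness of the SVD $H\Sigma W^T$ itself. I would handle this by first assuming simple (distinct, positive) singular values to nail down the permutation picture cleanly, then noting that for repeated values the same argument goes through after replacing the indexed columns $h_i,w_i$ by any orthonormal basis of the corresponding singular subspace; the structural conclusion $\Lambda\Lambda^T=\Sigma$ with matched column pairing is unchanged.
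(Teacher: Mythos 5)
The paper does not actually prove this lemma --- it is recalled from the literature (\cite{recht2010guaranteed,rennie2005fast}) without proof --- so your proposal can only be judged on its own merits. Pieces (a) and (b) are correct and complete: the AM--GM rebalancing argument and the dual-norm chain $\langle X,Y\rangle=\langle U,YV\rangle\le\|U\|_F\|Y\|\|V\|_F$ together with the explicit pair $(H\Sigma^{1/2},W\Sigma^{1/2})$ cleanly establish $\|X\|_*=\min\|U\|_F\|V\|_F=\min\tfrac12(\|U\|_F^2+\|V\|_F^2)$, which is all the paper ever uses downstream (the inequality $\tfrac12(\|U\|_F^2+\|V\|_F^2)\ge\|UV^T\|_*$ in Proposition~\ref{prop:optimal_UV}, and the existence of an SVD-based minimizer in Proposition~\ref{proposition:solver_UV}).

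Piece (c) has a genuine gap at the last step. Your equality analysis is correct up to the conclusion $V=W\Lambda$, $U=H\Lambda$, $\Lambda\Lambda^T=\Sigma$; but the claim that ``$U$ and $V$ pair column-by-column to reconstruct the rank-one summands $\sigma_ih_iw_i^T$'' is not implied by anything --- the factorization only requires $\sum_j u_jv_j^T=X$, not that each $u_jv_j^T$ be one of the SVD summands --- so you cannot restrict $\Lambda$ to $\Sigma^{1/2}P$ with $P$ a permutation. The full solution set of $\Lambda\Lambda^T=\Sigma$ (with $r$ columns) is $\Lambda=\Sigma^{1/2}Q$ for orthogonal $Q$, and every such $Q$ genuinely yields a minimizer: $U=H\Sigma^{1/2}Q$, $V=W\Sigma^{1/2}Q$ satisfy $UV^T=H\Sigma W^T=X$ and $\|U\|_F^2=\operatorname{tr}(Q^T\Sigma Q)=\operatorname{tr}(\Sigma)=\|X\|_*$. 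This happens even for distinct singular values --- e.g.\ $X=\operatorname{Diag}(2,1)$ with $Q$ a $45^\circ$ rotation gives a minimizer that is not of the permuted form --- so your plan to ``nail down the permutation picture'' by assuming simple singular values cannot work. The defect is really in the lemma's phrase ``exactly those'': the honest conclusion of your (correct) equality bookkeeping is that the minimizers are exactly the orbit $\{(H\Sigma^{1/2}Q,\,W\Sigma^{1/2}Q)\}$ over matrices $Q$ with orthonormal rows (plus appended zero columns if the inner dimension exceeds $r$), of which the stated permutation family is a proper subset. You should either prove that corrected characterization or drop piece (c) entirely, since only pieces (a) and (b) are needed elsewhere in the paper.
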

	
	Based on Lemma \ref{lemma:variational nuclear norm}, with a prior guess $k\le n_2$ of the rank of an optimal solution to \eqref{op:L}, the problem \eqref{op:L} can be reformulated as follows:
	\begin{equation}
		\label{op:UV1}
		\min_{U\in\mathbb{R}^{n_1 \times k},\atop
			V\in\mathbb{R}^{n_2 \times k}} f_{\rm pen}(U,V):= \|UV^T-A\|_F+\frac{\rho}{2}\left(\|U\|_F^2+\|V\|_F^2\right).
	\end{equation}
	It is not difficult to notice that the optimal solution set to \eqref{op:UV1}, denoted by $\Omega_{\rm pen}$, is nonempty since the objective function is coercive. Proposition~\ref{proposition:solver_UV} summarizes the relations between  \eqref{op:UV1} and \eqref{op:L}.
	\begin{proposition}\label{proposition:solver_UV}
		Let $L^*$ be an optimal solution to \eqref{op:L}. Assume $k\ge {\rm rank}(L^*)$, and let the  SVD of $L^*$ be $L^*=H^*\Sigma^*(W^*)^T$, where $H^*\in \mathbb{R}^{n_1\times k}$, $\Sigma^*\in \mathbb{R}^{k\times k}$, and $W^*\in \mathbb{R}^{n_2 \times k}$.  Then, \begin{equation}
			U^*=H^*(\Sigma^*)^{\frac12},V^*=W^*(\Sigma^*)^{\frac12}
		\end{equation}
		is an optimal solution to \eqref{op:UV1}.
		Conversely, if $U^*\in\mathbb{R}^{n_1\times k},V^*\in\mathbb{R}^{n_2\times k}$ is an optimal solution to \eqref{op:UV1}, then $U^*(V^*)^T$ is an optimal solution to \eqref{op:L}.
	\end{proposition}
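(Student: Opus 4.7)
The plan is to use Lemma~\ref{lemma:variational nuclear norm} as the bridge between the two problems: for any factorization $L = UV^T$ with $U \in \mathbb{R}^{n_1\times k}$, $V \in \mathbb{R}^{n_2\times k}$, the lemma gives the pointwise inequality $\|L\|_* \le \frac{1}{2}(\|U\|_F^2 + \|V\|_F^2)$, with equality attained whenever $(U,V)$ comes from a balanced square-root splitting of the SVD of $L$. This immediately suggests comparing the optimal values of \eqref{op:UV1} and \eqref{op:L} by sandwiching from both sides.

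For the forward direction, I would fix an arbitrary pair $(U,V)$ and set $L:=UV^T$. Applying the variational identity from the lemma to the regularizer in $f_{\rm pen}$ yields
\begin{equation*}
f_{\rm pen}(U,V) \;\ge\; \|UV^T - A\|_F + \rho\|UV^T\|_* \;=\; \|L - A\|_F + \rho\|L\|_* \;\ge\; \|L^*-A\|_F + \rho\|L^*\|_*,
\end{equation*}
where the last step uses optimality of $L^*$ for \eqref{op:L}. To show the proposed candidate $U^* = H^*(\Sigma^*)^{1/2}$, $V^* = W^*(\Sigma^*)^{1/2}$ achieves this lower bound, I would simply verify that $U^*(V^*)^T = H^*\Sigma^*(W^*)^T = L^*$ and that $\|U^*\|_F^2 = \|V^*\|_F^2 = \operatorname{tr}(\Sigma^*) = \|L^*\|_*$, so $\frac{1}{2}(\|U^*\|_F^2+\|V^*\|_F^2) = \|L^*\|_*$. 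This hands us $f_{\rm pen}(U^*,V^*) = \|L^*-A\|_F + \rho\|L^*\|_*$, matching the lower bound, hence optimal.

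For the converse, let $(U^*,V^*)$ be any global minimizer of \eqref{op:UV1} and set $\widetilde L := U^*(V^*)^T$. The forward direction has just established
\begin{equation*}
\min_{(U,V)} f_{\rm pen}(U,V) \;=\; \|L^*-A\|_F + \rho\|L^*\|_* \;=\; \min_L \bigl\{\|L-A\|_F + \rho\|L\|_*\bigr\}.
\end{equation*}
Applying the lemma's inequality to $(U^*,V^*)$ gives
\begin{equation*}
\|L^*-A\|_F + \rho\|L^*\|_* \;=\; f_{\rm pen}(U^*,V^*) \;\ge\; \|\widetilde L - A\|_F + \rho\|\widetilde L\|_* \;\ge\; \|L^*-A\|_F + \rho\|L^*\|_*,
\end{equation*}
forcing equality throughout, so $\widetilde L$ attains the minimum in \eqref{op:L}.

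The only subtlety I expect concerns the dimensional bookkeeping for the forward direction: Lemma~\ref{lemma:variational nuclear norm} as stated produces factors with exactly $r = \operatorname{rank}(L^*)$ columns, whereas \eqref{op:UV1} optimizes over matrices with $k \ge r$ columns. I would handle this by padding the SVD: append $k-r$ zero singular values together with any orthonormal completion of $H^*$ and $W^*$, so that $H^* \in \mathbb{R}^{n_1\times k}$, $\Sigma^* \in \mathbb{R}^{k\times k}$, and $W^* \in \mathbb{R}^{n_2\times k}$, without changing $L^*$ or its nuclear norm. With this padding, the lower-bound inequality $\|UV^T\|_* \le \frac12(\|U\|_F^2+\|V\|_F^2)$ still applies verbatim to $k$-column factorizations, and no further difficulty arises.
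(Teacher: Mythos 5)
Your proof is correct, and it is the intended argument: the paper states this proposition without proof, immediately after Lemma~\ref{lemma:variational nuclear norm}, precisely because the sandwich argument you give (lower-bounding $f_{\rm pen}$ by the objective of \eqref{op:L} via the variational inequality, then exhibiting the balanced square-root factorization that attains the bound) is the natural route. Your attention to the padding of the SVD to $k$ columns, which keeps $H^*$ and $W^*$ column-orthonormal so that $\|U^*\|_F^2 = \operatorname{tr}(\Sigma^*) = \|L^*\|_*$, correctly handles the only subtlety.
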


	Next, we show that for any given positive integer $k$, it is possible to derive a closed-form optimal solution for the possibly nonconvex optimization problem \eqref{op:UV1}. For this purpose, we take the same strategy as in subsection~\ref{subsec:updateL} by first exploring the special structure of the optimal solutions to \eqref{op:UV1} with a simple diagonal matrix $A$.

	\begin{proposition}\label{prop:optimal_UV}
		Assume that $A =
		\begin{pmatrix}
			{\rm Diag}(a)  \\
			0
		\end{pmatrix}\in\mathbb{R}^{n_1\times n_2}$ with $a \in \mathbb{R}^{n_2}$ and $a_1\ge a_2\ge \ldots \ge a_{n_2}\ge 0$. Then, there exists an optimal solution $(U^*, V^*) \in \Omega_{\rm pen}$ to \eqref{op:UV1} taking the form
		\[ U^* =
		\begin{pmatrix}
			{\rm Diag}(\xi^*)  \\
			0
		\end{pmatrix}\in\mathbb{R}^{n_1\times k} \mbox{ and } \, V^* =
		\begin{pmatrix}
			{\rm Diag}(\xi^*)  \\
			0
		\end{pmatrix} \in \mathbb{R}^{n_2\times k}
		\] with $\xi^*\in \mathbb{R}^{k}$ satisfying $\xi_1 \ge \ldots \ge \xi_k \ge 0$.
	\end{proposition}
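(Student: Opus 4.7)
The plan is to reduce \eqref{op:UV1} to a rank-constrained matrix problem via the variational formulation of the nuclear norm in Lemma \ref{lemma:variational nuclear norm}, then exploit the sorted diagonal structure of $A$ through the von Neumann trace inequality to show the reduced problem admits a diagonal optimizer that can be factored in the prescribed form. As a first step, I would establish
\[
\min_{U,V} f_{\rm pen}(U,V) \;=\; \min_{L\in\mathbb{R}^{n_1\times n_2},\,{\rm rank}(L)\le k}\bigl\{\|L-A\|_F+\rho\|L\|_*\bigr\}.
\]
The ``$\ge$'' direction is immediate from $\tfrac12(\|U\|_F^2+\|V\|_F^2)\ge\|UV^T\|_*$ in Lemma \ref{lemma:variational nuclear norm}. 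For ``$\le$'', given any $L$ with ${\rm rank}(L)\le k$, Lemma \ref{lemma:variational nuclear norm} also exhibits $U\in\mathbb{R}^{n_1\times k}$, $V\in\mathbb{R}^{n_2\times k}$ with $L=UV^T$ and $\tfrac12(\|U\|_F^2+\|V\|_F^2)=\|L\|_*$.

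Next, I would argue the right-hand rank-constrained problem has a diagonal optimizer. Given any feasible $L$, let $\sigma(L)\in\mathbb{R}^{n_2}$ denote the (zero-padded) vector of its singular values sorted nonincreasingly, and define $\tilde L:=\begin{pmatrix}{\rm Diag}(\sigma(L))\\ 0\end{pmatrix}\in\mathbb{R}^{n_1\times n_2}$. Clearly $\|\tilde L\|_F=\|L\|_F$, $\|\tilde L\|_*=\|L\|_*$, and ${\rm rank}(\tilde L)={\rm rank}(L)\le k$. Because $A$ is diagonal with sorted nonnegative entries, $\langle \tilde L,A\rangle=\sum_{i=1}^{n_2} a_i\sigma_i(L)$, which by the von Neumann trace inequality is an upper bound for $\langle L,A\rangle$. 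Expanding $\|\cdot-A\|_F^2$ then yields $\|\tilde L-A\|_F\le\|L-A\|_F$, so $\tilde L$ achieves a no-worse objective. Coercivity of the objective guarantees the infimum is attained, hence one can pick a diagonal optimizer $L^*=\begin{pmatrix}{\rm Diag}(\eta^*)\\0\end{pmatrix}$ with $\eta^*\in\mathbb{R}^{n_2}$ sorted nonincreasingly, nonnegative, and $\|\eta^*\|_0\le k$.

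Finally, I would set $\xi^*_i:=\sqrt{\eta^*_i}$ for $i=1,\dots,k$ (well-defined since $k\le n_2$), and take $U^*,V^*$ in the prescribed block form. Because $\eta^*$ is sorted with $\|\eta^*\|_0\le k$, the tail entries $\eta^*_{k+1},\dots,\eta^*_{n_2}$ all vanish, so a direct computation gives $U^*(V^*)^T=L^*$ and $\|U^*\|_F^2+\|V^*\|_F^2=2\sum_i\xi^{*2}_i=2\|L^*\|_*$. Hence $f_{\rm pen}(U^*,V^*)=\|L^*-A\|_F+\rho\|L^*\|_*$, matching the common value in the displayed identity above, so $(U^*,V^*)\in\Omega_{\rm pen}$ with the claimed structure. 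The main obstacle lies in the second step: the argument requires the von Neumann trace inequality to act on $\langle L,A\rangle$ in coordination with the singular-value-based formulas for $\|L\|_F$ and $\|L\|_*$, and one must verify that the single diagonalization $L\mapsto\tilde L$ simultaneously improves (or at least does not hurt) all three terms of the composite objective, as well as respecting the rank-$k$ constraint.
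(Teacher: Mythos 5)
Your proof is correct, and it is built from the same two ingredients as the paper's: the variational characterization $\tfrac12(\|U\|_F^2+\|V\|_F^2)\ge\|UV^T\|_*$ to eliminate the factors, and a singular-value inequality to show that replacing a candidate by the diagonal matrix of its singular values cannot increase the objective. The packaging differs slightly. You pass through an explicit rank-constrained problem in $L$ and control the data-fit term via the von Neumann trace inequality applied to $\langle L,A\rangle$ together with the expansion of $\|L-A\|_F^2$; the paper instead lower-bounds $f_{\rm pen}(U,V)$ directly by the auxiliary function $\|\sigma(UV^T)-a\|_2+\rho\|\sigma(UV^T)\|_1$, invoking the singular-value perturbation bound $\|\sigma(X)-\sigma(Y)\|_2\le\|X-Y\|_F$ (Corollary 7.3.5(b) of Horn and Johnson), and then exhibits a diagonal minimizer of that auxiliary problem at which the bound is tight. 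Since the perturbation bound is itself a consequence of von Neumann's inequality in exactly the way you expand the square, the two routes are mathematically equivalent; your version has the mild advantage of making the connection to the rank-constrained relaxation of \eqref{op:L} explicit, while the paper's auxiliary-function formulation feeds more directly into the subsequent reduction to problem \eqref{op:uv}. The only points worth tightening are cosmetic: Lemma \ref{lemma:variational nuclear norm} produces factors with $r={\rm rank}(L)$ columns, so you should note that they are padded with zero columns to reach width $k$, and the attainment claim should cite closedness of the set $\{L:{\rm rank}(L)\le k\}$ alongside coercivity.
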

	\begin{proof}
	We first note that $U_{n_2+1:n_1,:} = 0$ for all $(U,V) \in \Omega_{\rm pen}$ since $A_{n_2+1:n_1,:} = 0$. Hence, without loss of generality, we can assume that $n_1 = n_2$. Then, $A = {\rm Diag}(a)$.
	
	For any $U,V\in\mathbb{R}^{n_2\times k}$, it follows from \eqref{eq:variational nuclear norm} that
	$\frac{1}{2}\left(\|U\|_F^2 + \|V\|_F^2\right)\geq \|UV^T\|_*$.
	This, together with Corollary 7.3.5(b) of \cite{horn2012matrix}, implies that
	
	\begin{equation}
		\label{eq:fpenoverfaux}
		f_{\rm pen}(U,V) \ge \|\sigma(UV^T) - a\|_2 + \rho \|\sigma(UV^T)\|_1, \quad \forall\ U, V\in\mathbb{R}^{n_2\times k}.
	\end{equation}
	Here, the operator $\sigma: \mathbb{R}^{n_2\times n_2} \to \mathbb{R}^{n_2}$ extracts the singular values of a matrix and arranges them in descending order.
	Based on the above inequality, we consider the %
	auxiliary problem of \eqref{op:UV1}:
	\begin{equation}
		\label{prob:aux}
		\min_{U, V\in\mathbb{R}^{n_2\times k}} f_{\rm aux}(U,V) := \|\sigma(UV^T) - a\|_2 + \rho \|\sigma(UV^T)\|_1.
	\end{equation}
	It is not difficult to see that $f_{\rm aux}$ is coercive, and thus, the optimal solution set to \eqref{prob:aux}, %
	denoted by $\Omega_{\rm aux}$, is nonempty. Let $(\widehat U, \widehat V)\in \Omega_{\rm aux}$ be an arbitrary optimal solution to \eqref{prob:aux}.
	Consider the following SVD %
	of $\widehat U \widehat V^T$ as
	\[\widehat U \widehat V^T = \widehat H \widehat \Sigma \widehat W^T \mbox{ with } \widehat H, \widehat W^T \in \mathbb{R}^{n_2\times k}, \, \widehat\Sigma \in \mathbb{R}^{k\times k}.\]
	Now, let
	$
	\widetilde U = \begin{pmatrix}
		\widehat \Sigma^{\frac{1}{2}}  \\
		0
	\end{pmatrix}\in\mathbb{R}^{n_2 \times k}
	$ and $\widetilde V = \widetilde U$. It is obvious that
	$\sigma(\widehat U\widehat V^T) = \sigma(\widetilde U \widetilde V^T)$. Then, we see that $f_{\rm aux}(\widehat U, \widehat V) = f_{\rm aux}(\widetilde U, \widetilde V)$ and thus $(\widetilde U, \widetilde V)\in \Omega_{\rm aux}$. Moreover, by observing that
	\[
	\frac{1}{2}\left(\|\widetilde U\|_F^2 + \|\widetilde V\|_F^2 \right) = \|\sigma(\widetilde U\widetilde V)\|_1 \mbox{ and } \|\widetilde U \widetilde V^T - A\|_F = \|\sigma(\widetilde U\widetilde V^T) - a\|_2,
	\]
	we have that $f_{\rm pen}(\widetilde U, \widetilde V) = f_{\rm aux}(\widetilde U, \widetilde V)$. Then, it holds from
	\eqref{eq:fpenoverfaux} and $(\widetilde U, \widetilde V)\in \Omega_{\rm aux}$ that
	\[
	f_{\rm pen}(U,V) \ge f_{\rm aux}(U,V) \ge f_{\rm aux}(\widetilde U, \widetilde V) = f_{\rm pen}(\widetilde U, \widetilde V), \quad \forall\ U, V\in\mathbb{R}^{n_2\times k},
	\]
	which implies that $(\widetilde U, \widetilde V)\in \Omega_{\rm pen}$. Now, we can set $\xi^*$ as the vector of diagonal entries of $\widehat \Sigma^{1/2}$ %
	and complete the proof.
	\end{proof}
	
	Inspired by Proposition \ref{prop:optimal_UV}, in the subsequent analysis, we focus on the following %
	problem:
	\begin{equation}
		\label{op:uv}
		\min_{s\in\mathbb{R}^k,s\geq 0}  \sqrt{\|s- w \|_2^2+ c^2}+\rho \left\langle  s,1_k \right\rangle,
	\end{equation}
	where $c\ge 0$ and $w \in \mathbb{R}^k$ with $w_1 \ge \cdots \ge w_k \ge 0$ are given data. In fact, when $A$
	is diagonal, we can obtain a special optimal solution pair $(U^*,V^*)\in \Omega_{\rm pen}$ %
	by solving \eqref{op:uv} with $w = a_{1:k}$ and $c =\sqrt{\sum_{i=k+1}^{n_2} a_{i}^2}$. %
	With this observation, we derive %
	in the following theorem an optimal solution to \eqref{op:UV1} %
	for a general matrix $A$.
	
	\begin{theorem}\label{thm:opt_UV_general}
		Let $A = H \Sigma W^T$ be its SVD, where $H \in \mathbb{R}^{n_1\times n_1}$ and $W\in\mathbb{R}^{n_2\times n_2}$ are orthogonal matrices, $\Sigma =
		\begin{pmatrix}
			{\rm Diag}(a) \\
			0
		\end{pmatrix} \in \mathbb{R}^{n_1\times n_2}
		$
		with $a\in \mathbb{R}^{n_2}$ and  $a_1 \geq \dots \geq a_{n_2} \ge 0$.
		Let $s^* \in \mathbb{R}^k$ be an optimal solution to \eqref{op:uv} with $w = a_{1:k}$, and $c = \sqrt{\sum_{i=k+1}^{n_2} a_i^2}$. Then, an optimal solution to \eqref{op:UV1} is
		\begin{equation}\label{eq:UV}
			(U^*, V^*) = \left(
			H \begin{pmatrix}
				{\rm Diag}(\sqrt{s^*}) \\
				0_{(n_1-k)\times k}
			\end{pmatrix}, \, W \begin{pmatrix}
				{\rm Diag}(\sqrt{s^*}) \\
				0_{(n_2-k)\times k}
			\end{pmatrix}
			\right) \in \Omega_{\rm pen},
		\end{equation}
		where $\sqrt{s^*}:= (\sqrt{s_1},\ldots, \sqrt{s_k})^T$.
	\end{theorem}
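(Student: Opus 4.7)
The plan is to reduce the general case to the diagonal one already handled by Proposition \ref{prop:optimal_UV}, and then convert that diagonal problem into \eqref{op:uv} via a squaring change of variables, keeping track along the way of how optimal solutions correspond so that the claimed formula \eqref{eq:UV} can be assembled at the end.

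First, I would exploit the orthogonal invariance of the Frobenius norm. Given any $(U,V) \in \mathbb{R}^{n_1\times k} \times \mathbb{R}^{n_2 \times k}$, set $\widetilde U = H^T U$ and $\widetilde V = W^T V$. Since $H$ and $W$ are orthogonal, $\|U\|_F = \|\widetilde U\|_F$, $\|V\|_F = \|\widetilde V\|_F$, and $\|UV^T - A\|_F = \|\widetilde U \widetilde V^T - \Sigma\|_F$. Hence \eqref{op:UV1} is equivalent to the same problem with $A$ replaced by the diagonal matrix $\Sigma$, with solutions related by $U = H\widetilde U$ and $V = W\widetilde V$. Applying Proposition \ref{prop:optimal_UV} to this reduced diagonal problem supplies an optimal pair $(\widetilde U^*, \widetilde V^*)$ of the form $\widetilde U^* = \widetilde V^* = \begin{pmatrix} \mathrm{Diag}(\xi^*) \\ 0 \end{pmatrix}$ for some $\xi^* \in \mathbb{R}^k$ with $\xi_1^* \ge \cdots \ge \xi_k^* \ge 0$.

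Next, I would substitute this diagonal structure into $f_{\rm pen}$. A direct block computation, using $\mathrm{Diag}(\xi^*)\,\mathrm{Diag}(\xi^*) = \mathrm{Diag}(\xi^* \odot \xi^*)$, gives
\[
\|\widetilde U^* (\widetilde V^*)^T - \Sigma\|_F^2 = \sum_{i=1}^{k} \big((\xi_i^*)^2 - a_i\big)^2 + \sum_{i=k+1}^{n_2} a_i^2, \qquad \tfrac{1}{2}\big(\|\widetilde U^*\|_F^2 + \|\widetilde V^*\|_F^2\big) = \sum_{i=1}^k (\xi_i^*)^2.
\]
Introducing $s_i = (\xi_i^*)^2$, which is a bijection between $\xi_i^* \ge 0$ and $s_i \ge 0$, reduces the minimization over $\xi^*$ to exactly problem \eqref{op:uv} with $w = a_{1:k}$ and $c = \sqrt{\sum_{i=k+1}^{n_2} a_i^2}$. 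Therefore, for any optimal $s^*$ of \eqref{op:uv}, setting $\xi_i^* = \sqrt{s_i^*}$ recovers an optimal $(\widetilde U^*, \widetilde V^*)$ for the reduced problem.

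Finally, I would undo the orthogonal change of variables via $U^* = H \widetilde U^*$ and $V^* = W \widetilde V^*$ to arrive at \eqref{eq:UV}. The main obstacle is purely the dimensional bookkeeping: the $k \times k$ block $\mathrm{Diag}(\xi^*)$ must be padded to $n_1 \times k$ and $n_2 \times k$ factors, and care is needed when writing out $\widetilde U^* (\widetilde V^*)^T - \Sigma$ as an $n_1 \times n_2$ block matrix so that the trailing diagonal entries $a_{k+1}, \dots, a_{n_2}$ contribute exactly $\sum_{i=k+1}^{n_2} a_i^2$ to the Frobenius norm. Once this is handled, every reduction above is an equivalence between optimal solution sets, so optimality propagates back to $\Omega_{\rm pen}$ without loss.
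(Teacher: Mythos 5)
Your proposal is correct and follows essentially the same route as the paper: reduce to the diagonal case via orthogonal invariance of the Frobenius norm, then invoke Proposition \ref{prop:optimal_UV}. The extra step you spell out --- the substitution $s_i = (\xi_i^*)^2$ turning the diagonal problem into \eqref{op:uv} --- is handled in the paper's discussion immediately preceding the theorem rather than in its proof, so your version is just a more explicit rendering of the same argument.
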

	\begin{proof}
	From the orthogonal invariance of the Frobenius norm, we have that
	\[
	f_{\rm pen}(U,V) = \|H^T U V^T W - \Sigma\|_F + \frac{\rho}{2}(\|H^T U\|_F^2 + \|W^T V\|_F^2).
	\]
	The desired result then follows from Proposition \ref{prop:optimal_UV}.
	\end{proof}

	Theorem \ref{thm:opt_UV_general} indicates that an optimal solution to \eqref{op:UV1} can be obtained by solving a simplified problem \eqref{op:uv}.
	The following theorem gives an explicit formula of an optimal solution to \eqref{op:uv}, which can be regarded as a generalization of Theorem \ref{thm:1}. When $c=0$, Theorem \ref{thm:overparameter} reduces to Theorem \ref{thm:1}, as problem \eqref{op:uv} reduces to problem \eqref{op:2}.
	Its proof is similar to that of Theorem \ref{thm:1} and is thus documented in Appendix \ref{sec:proof_thm_7} for the clarity and coherence in presentation.
	
	\begin{theorem}
		\label{thm:overparameter}
		Assume that the vector $w$ and the constant $c$ in \eqref{op:uv} satisfy
		\begin{equation}
			\label{assum:uv}
			w_1\geq w_2\geq\dots\geq w_k\geq0, \quad w\neq0, \quad c\geq0.
		\end{equation}
		Denote $\ell=\|w\|_0$.
		Then, it holds that $d_\rho(w,c)$ defined by
		\begin{equation}\label{eq:lowrankd}
			d_{\rho}(w,c)= \begin{cases}
				0, &\text{if}\quad \frac{\|w\|_\infty}{\sqrt{\|w\|_2^2 +c^2}}\leq \rho,\\
				\max(w-t_i  1_k,0),&\text{if}\quad \frac{w_\ell}{\sqrt{\ell w_\ell^2+c^2}} <\rho
				<\frac{\|w\|_\infty}{\sqrt{\|w\|_2^2+c^2}},\\
				w-\rho\sqrt{\frac{ c^2}{1-k\rho^2}}1_k,&\text{if }\quad 0< \rho\leq \frac{w_\ell}{\sqrt{\ell w_\ell^2+c^2}},
			\end{cases}
		\end{equation}
		is  optimal to \eqref{op:uv}, where the index $i \in \{1,2,\ldots, \ell - 1\}$ is the unique integer satisfying
		\begin{equation}\label{search:rk2}
			w_{i+1} \leq t_i:= \sqrt{\frac{w_{i+1}^2 + \dots + w_k^2+c^2}{\frac{1}{\rho^2}-i}} < w_i.
		\end{equation}
	\end{theorem}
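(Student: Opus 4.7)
The plan is to mirror the proof strategy of Theorem~\ref{thm:1}. Since the feasible set of \eqref{op:uv} is $\mathbb{R}^k_+$ and the objective $\sqrt{\|s-w\|_2^2 + c^2} + \rho\langle s, 1_k\rangle$ is convex and smooth whenever $c > 0$ (and smooth at every $s \neq w$ when $c = 0$), the first-order optimality condition at any optimal $s^*$ reads
$$
0 \in \frac{s^* - w}{\sqrt{\|s^* - w\|_2^2 + c^2}} + \rho\, 1_k + \mathcal{N}_{\mathbb{R}^k_+}(s^*).
$$
I would verify each of the three candidate solutions in \eqref{eq:lowrankd} by substitution, using the explicit description of the normal cone from \eqref{normal-cone}. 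Case~1 ($s^* = 0$) is immediate: the condition collapses to $\rho \geq w_j/\sqrt{\|w\|_2^2+c^2}$ for every $j$, which is precisely $\rho \geq \|w\|_\infty/\sqrt{\|w\|_2^2+c^2}$. For case~3, I would first check that $s^* := w - \rho c/\sqrt{1-k\rho^2}\, 1_k$ lies in $\mathbb{R}^k_+$ under the hypothesis $\rho \leq w_\ell/\sqrt{\ell w_\ell^2 + c^2}$ (the only nontrivial sub-case when $c > 0$ being $\ell = k$). A direct computation then gives $\|s^*-w\|_2^2 + c^2 = c^2/(1-k\rho^2)$, so the gradient of the square-root term plus $\rho\, 1_k$ vanishes exactly.

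The main work lies in case~2, where I would adapt the sandwich argument from Theorem~\ref{thm:1}. Let $\bar k$ be the largest integer strictly less than $1/\rho^2$. The upper bound $\rho < \|w\|_\infty/\sqrt{\|w\|_2^2+c^2}$ yields $[w_{i+1} - t_i]\big|_{i=0} = w_1 - \rho\sqrt{\|w\|_2^2 + c^2} > 0$, while $0 < 1/\rho^2 - \bar k \leq 1$ gives $t_{\bar k}^2 \geq w_{\bar k+1}^2 + \cdots + w_k^2 + c^2 \geq w_{\bar k+1}^2$, hence $[w_{i+1} - t_i]\big|_{i=\bar k} \leq 0$. Therefore there exists $i \in \{1,\ldots,\bar k\}$ with $w_i > t_{i-1}$ and $w_{i+1} \leq t_i$; the algebraic manipulation $w_i^2 > t_{i-1}^2 \Rightarrow w_i^2 > t_i^2$ from \eqref{eq:akgtk} transfers verbatim once $c^2$ is added to the numerators, giving $t_i < w_i$ and thus \eqref{search:rk2}. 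Uniqueness follows from the same contradiction argument as in Theorem~\ref{thm:1}, exploiting the sorted order in \eqref{assum:uv}. The refinement $i \leq \ell - 1$ would follow from the lower bound $\rho > w_\ell/\sqrt{\ell w_\ell^2 + c^2}$, which rules out $i \geq \ell$ by reversing the manipulations used in case~3. Finally, at $s^* = \max(w - t_i 1_k, 0)$ the pivotal identity
$$
\|s^*-w\|_2^2 + c^2 \;=\; i\, t_i^2 + (w_{i+1}^2 + \cdots + w_k^2 + c^2) \;=\; \frac{t_i^2}{\rho^2}
$$
reduces the KKT system to checking whether $\rho(s^*-w)/t_i + \rho\, 1_k \in -\mathcal{N}_{\mathbb{R}^k_+}(s^*)$; its first $i$ coordinates vanish, and its remaining coordinates $\rho(1 - w_j/t_i)$ are nonnegative because $w_j \leq w_{i+1} \leq t_i$ for $j \geq i+1$.

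The hard part will be the bookkeeping: showing that $i \in \{1,\ldots,\ell-1\}$ rather than merely $i \leq \bar k$ in case~2, and confirming that the three cases tile the admissible range of $\rho$ cleanly at the transitions $\rho = w_\ell/\sqrt{\ell w_\ell^2 + c^2}$ and $\rho = \|w\|_\infty/\sqrt{\|w\|_2^2 + c^2}$. Once the key identity $\|s^*-w\|_2^2 + c^2 = t_i^2/\rho^2$ is in hand, the KKT verification becomes a direct generalization of Theorem~\ref{thm:1}, which is indeed recovered upon setting $c = 0$.
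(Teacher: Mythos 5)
Your overall strategy (case analysis on $\rho$ plus direct verification of the first-order condition $0 \in (s^*-w)/\sqrt{\|s^*-w\|_2^2+c^2} + \rho 1_k + \mathcal{N}_{\mathbb{R}^k_+}(s^*)$) matches the paper's, and your treatment of the two extreme cases and of the final KKT check in the middle case, including the identity $\|s^*-w\|_2^2+c^2 = t_i^2/\rho^2$, is correct. The gap is in the existence argument for the index $i$ in the middle case. You anchor the sandwich at $i=\bar k$, where $\bar k$ is the largest integer strictly below $1/\rho^2$, and claim the evaluation $[w_{i+1}-t_i]\big|_{i=\bar k}\le 0$. But when $c>0$ the case hypothesis $\rho> w_\ell/\sqrt{\ell w_\ell^2+c^2}$ no longer forces $\rho>1/\sqrt{\ell}$ (the threshold is strictly below $1/\sqrt{\ell}$ once $c>0$), so $1/\rho^2$ --- and hence $\bar k$ --- can be arbitrarily larger than $k$: take $k=2$, $w=(1,0.01)$, $c=100$, $\rho=0.005$, which lies in the middle range yet gives $\bar k=39999$. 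Then $w_{\bar k+1}$ and $t_{\bar k}$ do not exist and the right endpoint of your sandwich is undefined. The step you describe as transferring ``verbatim'' from Theorem~\ref{thm:1} is precisely the step that breaks, because in Theorem~\ref{thm:1} the bound $\bar k\le \|a\|_0-1$ came from $\tau>1/\sqrt{\|a\|_0}$, which has no analogue here.

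The paper closes this gap with an extra device: it introduces the nonincreasing sequence $s_j=w_j/\sqrt{(j-1)w_j^2+w_j^2+\cdots+w_k^2+c^2}$, $j=1,\dots,k$, whose endpoints are exactly the two thresholds $w_k/\sqrt{kw_k^2+c^2}$ and $w_1/\sqrt{\|w\|_2^2+c^2}$, locates $\bar\ell\in\{1,\dots,k-1\}$ with $\rho\in[s_{\bar\ell+1},s_{\bar\ell})$, shows $\bar\ell<1/\rho^2$ (so $t_{\bar\ell}$ is well defined), and runs the sandwich over $\{0,\dots,\bar\ell\}$. Your argument could alternatively be repaired by contradiction: if $w_{j+1}>t_j$ held for every $j$ up to $k-1$, the chain $w_j>t_{j-1}\Rightarrow w_j>t_j$ (which automatically keeps the denominators $1/\rho^2-j$ positive) would yield $w_k>\rho\sqrt{c^2/(1-k\rho^2)}$, i.e.\ $\rho<w_k/\sqrt{kw_k^2+c^2}$, contradicting the case hypothesis. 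Either way, some replacement for the invalid endpoint evaluation is required; your proposal as written does not supply one. The remaining items you defer --- that $i\le\ell-1$ and that the three ranges of $\rho$ tile correctly (Remark~\ref{remark:ineq}) --- are genuine but minor once the existence argument is fixed.
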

	
	\begin{remark}\label{remark:ineq}
		We shall mention that under the assumption \eqref{assum:uv}, the inequality
		${w_\ell}/{\sqrt{\ell w_\ell^2 + c^2}} \leq {\|w\|_\infty}/{\sqrt{\|w \|_2^2 + c^2}}$ holds naturally.
		Indeed, since $\|w\|_2^2 \leq \ell w_1^2$, we have \[\frac{\|w\|_\infty}{\sqrt{\|w\|_2^2 + c^2}} \geq \frac{\|w\|_\infty}{\sqrt{l w_1^2 + c^2}}
		= \frac{1}{\sqrt{\ell + \frac{c^2}{w_1^2}}} \geq \frac{1}{\sqrt{\ell + \frac{c^2}{w_\ell^2}}} = \frac{w_\ell}{\sqrt{\ell w_\ell^2 + c^2}}
		.\]
	\end{remark}

	One potential difficulty of our above BM-decomposition approach is to obtain a reasonable estimation of the parameter $k$ in \eqref{op:UV1}.
	Proposition \ref{proposition:solver_UV} establishes a sound relation between \eqref{op:UV1} and \eqref{op:L} only under the assumption that $k\ge {\rm rank}(L^*)$.
	However, the rank of $L^*$ is generally unavailable.
	In the following discussions, we show that it is possible to assess the correctness of $k$, or equivalently the legitimacy of problem \eqref{op:UV1}, without knowing the true rank of  $L^*$.
	We begin by observing that the correctness of $k$ can be verified  by checking whether the optimal solution to \eqref{op:uv} can be extended to yield an optimal solution to problem \eqref{op:2}.
	Indeed, Theorem \ref{thm:update_L} indicates that the rank of $L^*$ is determined by the optimal solution to the corresponding problem \eqref{op:2}, and Theorem \ref{thm:opt_UV_general} shows that the optimal solution to \eqref{op:UV1} can be obtained by using the matrix decomposition and solving the  optimization problem \eqref{op:uv}.
	In the following proposition, we provide a direct and readily assessable criterion for determining whether the optimal solution to \eqref{op:uv} can be extended to be an optimal solution to problem \eqref{op:2}.
	Its proof is straightforward yet tedious, and for clarity, the details are deferred to  Appendix \ref{sec:proof_prop_8}.
	
	\begin{proposition}\label{prop:condition_k_geq_rankL}
		Let $d_\rho(w,c)$ be an optimal solution to \eqref{op:uv} with $w=a_{1:k}, c=\sqrt{\sum_{i=k+1}^{n_2}a_i^2}$, where $1\leq k<n_2$ and nonzero vector $a\in\mathbb{R}^{n_2}$ is a given vector such that $a_1\geq\dots\geq a_{n_2}\geq0$. Denote $\ell=\|w\|_0$.
		Then,  $[d_\rho(w,c);0_{n_2-k}]$ is an optimal solution to \eqref{op:2} with $\tau=\rho$ if and only if
		\begin{equation}
			\label{eq:condition_k_geq_rankL}
			\rho \geq \frac{a_{\ell+1}}{\sqrt{\ell a_{\ell+1}^2 +c^2}}.
		\end{equation}
		
	\end{proposition}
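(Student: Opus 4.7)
The plan is to verify the first-order optimality condition for problem \eqref{op:2} (with $\tau=\rho$) directly at the candidate point $s^\star := [d_\rho(w,c);\,0_{n_2-k}]$. Writing $\Delta := \sqrt{\|d_\rho(w,c) - w\|_2^2 + c^2}$, a direct computation gives $\|s^\star - a\|_2 = \Delta$, and the inclusion
\[
0 \in \frac{s^\star - a}{\|s^\star - a\|_2} + \rho\,1_{n_2} + \mathcal{N}_{\mathbb{R}^{n_2}_+}(s^\star)
\]
decouples across coordinates. Its first $k$ components coincide with the first-order optimality condition of $d_\rho(w,c)$ for \eqref{op:uv}, hence they are satisfied for free. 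Using the sorted order of $a$, the last $n_2 - k$ components collapse to the single scalar inequality
\[
\rho \Delta \ge a_{k+1}. \qquad (\star)
\]
The degenerate case $s^\star = a$ (which can occur only when $c = 0$ and $d_\rho(w,c) = w$) will be treated separately via the unit-ball subgradient of $\|\cdot\|_2$ at $0$, analogous to the argument in Theorem \ref{thm:1}(ii).

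Next, I would simplify the stated condition \eqref{eq:condition_k_geq_rankL}. Since $\ell := \|w\|_0 \le k$, either $\ell = k$ (so trivially $a_{\ell+1} = a_{k+1}$), or $\ell < k$, in which case $w_{\ell+1} = 0$ together with the sorted order of $a$ forces $a_{k+1} = \cdots = a_{n_2} = 0$ and consequently $c = 0$. Either way, $a_{\ell+1} = a_{k+1}$ and $\ell\, a_{\ell+1}^2 = k\, a_{k+1}^2$, so \eqref{eq:condition_k_geq_rankL} is equivalent to
\[
\rho \ge \frac{a_{k+1}}{\sqrt{k\,a_{k+1}^2 + c^2}}, \qquad (\star\star)
\]
with the convention that $0/0$ means the inequality holds vacuously. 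It then suffices to show $(\star)\Longleftrightarrow(\star\star)$ in each of the three regimes of Theorem \ref{thm:overparameter}.

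The bulk of the work is this case analysis. In Case I of Theorem \ref{thm:overparameter}, $d_\rho(w,c) = 0$ gives $\Delta = \|a\|_2$; the Case I range $\rho \ge a_1/\|a\|_2$ directly yields $\rho \Delta \ge a_1 \ge a_{k+1}$, and it also implies $(\star\star)$ via the elementary inequality $a_1/\|a\|_2 \ge a_{k+1}/\sqrt{k\,a_{k+1}^2 + c^2}$ (which reduces to $k\,a_1^2 \ge \sum_{i=1}^k a_i^2$). In Case II, the defining identity of $t_i$ gives $\Delta = t_i/\rho$, so $(\star)$ becomes $t_i \ge a_{k+1}$; this holds automatically because $i+1 \le \ell \le k$ implies $t_i \ge w_{i+1} = a_{i+1} \ge a_{k+1}$. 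Meanwhile $(\star\star)$ follows from the Case II lower bound $\rho > a_k/\sqrt{k\,a_k^2 + c^2}$ (when $\ell=k$) together with the monotonicity of $x \mapsto x/\sqrt{k x^2 + c^2}$ and $a_k \ge a_{k+1}$; if $\ell < k$, the right-hand side of $(\star\star)$ is zero. Finally, in Case III, $\Delta = c/\sqrt{1 - k \rho^2}$, and squaring $(\star)$ directly yields $\rho^2 (k\,a_{k+1}^2 + c^2) \ge a_{k+1}^2$, which is precisely $(\star\star)$.

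The main obstacle I anticipate is the careful bookkeeping required in the interplay between the indices $\ell$ and $k$ across the three regimes of Theorem \ref{thm:overparameter}, particularly in ensuring that the various degenerate situations ($\ell < k$, $a_{k+1} = 0$, $c = 0$, or $s^\star = a$) are correctly absorbed into the statement of \eqref{eq:condition_k_geq_rankL}. Once those boundary cases are nailed down, the remaining steps are elementary KKT manipulations and algebraic rearrangements as outlined above.
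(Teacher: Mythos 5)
Your proposal is correct, and it takes a genuinely different (and arguably cleaner) route on the sufficiency half. The paper splits the proof into two claims: for sufficiency it re-derives the optimality of $[d_\rho(w,c);0_{n_2-k}]$ from scratch by matching the explicit formula \eqref{eq:lowrankd} against the three cases of Theorem \ref{thm:1} (and Proposition \ref{pro:unique}), which forces a cascade of sub-cases ($c=0$ versus $c>0$, $\rho^2=1/\|a\|_0$ versus $\rho^2>1/\|a\|_0$, $\rho<\|a\|_\infty/\|a\|_2$ versus equality); for necessity it exhibits the violated KKT component at coordinate $k+1$, which is exactly your inequality $\rho\Delta\ge a_{k+1}$ in disguise. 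Your key observation --- that $\|s^\star-a\|_2^2=\|d_\rho(w,c)-w\|_2^2+c^2$ makes the first $k$ coordinates of the KKT system for \eqref{op:2} literally identical to the KKT system for \eqref{op:uv}, hence inherited for free from Theorem \ref{thm:overparameter} --- is not exploited in the paper's Claim 1 at all, and it collapses both directions of the equivalence into the single scalar test $\rho\Delta\ge a_{k+1}$. What this buys is a unified if-and-only-if argument in which the three-regime case analysis is reduced to computing $\Delta$ and checking one inequality per regime; the computations you sketch ($\Delta=\|a\|_2$, $\Delta=t_i/\rho$, $\Delta=c/\sqrt{1-k\rho^2}$) and the reduction of \eqref{eq:condition_k_geq_rankL} to $\rho\ge a_{k+1}/\sqrt{ka_{k+1}^2+c^2}$ all check out, including the observation that the condition can only fail in the third regime, where squaring gives an exact equivalence. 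The one place you must be as careful as you promise is the degenerate situation $s^\star=a$ (equivalently $c=0$ and $d_\rho(w,c)=w$, which occurs only in the third regime): there the gradient is replaced by the unit ball $\mathcal{B}_2$ and optimality follows from Theorem \ref{thm:1}(ii) with $\|a\|_0=\ell$, consistent with the convention that \eqref{eq:condition_k_geq_rankL} holds vacuously when $a_{\ell+1}=0$ --- a convention the paper also adopts implicitly in its Claim 2.
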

	
	Proposition \ref{prop:condition_k_geq_rankL} establishes a criterion for assessing whether $[d_\rho(w,c);0_{n_2-k}]$ constitutes an optimal solution to \eqref{op:2}.
	While it is possible to verify this by checking the optimality conditions of \eqref{op:2} at $[d_\rho(w,c);0_{n_2-k}]$,  Proposition \ref{prop:condition_k_geq_rankL} offers a more direct and profound insight.
	Then, based on previous discussions, the validity of $k$ can now be determined easily, albeit at the cost of computing one additional singular value of $A$ due to the presence of $a_{\ell+1}$ in \eqref{eq:condition_k_geq_rankL}.
	Indeed, if the condition in \eqref{eq:condition_k_geq_rankL} is satisfied, then $k$ is an appropriate choice, meaning that $k \geq \mathrm{rank}(L^*)$. Otherwise, since $a_{\ell+1}/{\sqrt{\ell a_{\ell+1}^2 +c^2}}$ is  nonincreasing in terms of $\ell$,  $k$ shall be iteratively increased until \eqref{eq:condition_k_geq_rankL} is eventually satisfied.

	Now, we are able to, as promised at the beginning of this section, accelerate the solving of \eqref{op:L} based on  Theorems \ref{thm:opt_UV_general} and \ref{thm:overparameter}.
	The detailed steps are summarized in Algorithm \hyperref[algo:update_L_acc]{acc\_updateL}. As one can observe, a ``while'' loop is incorporated to ensure the validity of the estimated rank $k$. However, this also implies that numerous partial SVD may be required.
	Fortunately, we observe in our numerical experiments with Algorithm  \hyperref[algo:AltMin]{AltMin} that the rank of the iterates $\{L^i\}$ tends to  stabilize after only a few iterations. This phenomenon may be related to the partial smoothness of the objective function in \eqref{prob:srpcp} \cite{lewis2002active,daniilidis2014orthogonal,Viter2018Model}  and the finite rank-identification property of the alternating minimization algorithm, which will be further investigated in the future work.
	This observation also suggests a practical heuristic: initializing the rank estimate in each iteration based on the rank of the solution from the previous iteration.
	It is worth noting that with this heuristic, the ``while'' loop usually takes only one iteration in our numerical tests.

	\begin{algorithm}[H] \label{algo:update_L_acc}
		\caption{{\bf acc\_updateL}: An accelerated version of Step 1 in Algorithm  \hyperref[algo:AltMin]{AltMin} $(L,k)={\rm \bf acc\_updateL}(A,\rho,k,\Delta k)$}
		\vskip6pt
		\begin{algorithmic}
			\State\textbf{Initialization }  Given the data matrix $A$, parameter $\rho>0$, estimated rank $k$, and chosen updating integer $\Delta k >0$.
			
			\While{ \eqref{eq:condition_k_geq_rankL} not satisfied }
			\State\textbf{Step 1. } Increase rank estimate: $k \leftarrow k + \Delta k$.
			
			\State\textbf{Step 2. } Calculate the partial SVD of $A$ using the Matlab command {\tt svds}:
			\begin{equation}
				(H,\Sigma,W)=\mbox{\tt svds}(A,k+1).
			\end{equation}
			
			\State\textbf{Step 3. } Set $\sigma={\rm diag}(\Sigma$), $w=\sigma_{1:k}$,
			and $c=\sqrt{\|A\|_F^2-\|w\|_2^2}$.
			
			\State\textbf{Step 4. } Compute the optimal solution $d_{\rho}(w,c)$ via \eqref{eq:lowrankd}.
			\EndWhile
			\State\textbf{Step 5. } Set $U = H_{:,1:k} {\rm Diag}(\sqrt{d_\rho(w,c)}),V = W_{:,1:k} {\rm Diag}(\sqrt{d_\rho(w,c)})$.

			\State\textbf{Output: } $(UV^T,\|d_\rho(w,c)\|_0)$.
		\end{algorithmic}
	\end{algorithm}

	With all these preparations, we now provide the accelerated version of Algorithm  \hyperref[algo:AltMin]{AltMin} for efficiently solving problem \eqref{prob:srpcp}.
	The detailed steps are summarized in  Algorithm \hyperref[algo:Over]{Acc\_AltMin}.

	\begin{algorithm}[H] \label{algo:Over}
		\caption{{\bf Acc\_AltMin}: An accelerated alternating minimization method for solving \eqref{prob:srpcp}}
		\vskip6pt
		\begin{algorithmic}
			
			\State\textbf{Initialization } Given the data matrix $D\in\mathbb{R}^{n_1\times n_2}$, parameters $\lambda>0$, $\mu>0$, choose any $S^0 \in \mathbb{R}^{n_1\times n_2}$ and set the estimated rank $k^0$.
			
			\For{ $i=0,1\dots,$ }
			\State\textbf{Step 1. }
			Update
			\begin{equation*}
				S^{i+1} \in\arg\min_S \,\, \{  \lambda\|S\|_1 + \mu\|L^{i}+S-D\|_F \}.
			\end{equation*}

			\State\textbf{Step 2. } Update $L^{i+1}$ and the estimated rank $k^{i+1}$:
			\begin{equation*}
				(L^{i+1},k^{i+1}) = {\rm \bf acc\_updateL}(D-S^{i+1},1/\mu,k^i,1).
			\end{equation*}
			
			\EndFor
		\end{algorithmic}
		
	\end{algorithm}
	
	\section{Numerical experiments}
	\label{sec:num}
	In this section, we compare the performances of our algorithms, i.e., Algorithm  \hyperref[algo:AltMin]{AltMin} and its accelerated counterpart Algorithm \hyperref[algo:Over]{Acc\_AltMin}, and the ADMM used by %
	\cite{zhang2021square} for solving the SRPCP problem \eqref{prob:srpcp}.
	The experiments are conducted by running Matlab (version 9.12) on a Linux workstation (128-core, Intel Xeon Platinum 8375C @ 2.90GHz, 1024 Gigabytes of RAM).
	We use the Matlab command {\tt svds} with default settings to compute the partial SVD in Algorithm~ \hyperref[algo:update_L_acc]{acc\_updateL}.
	
	In our numerical experiments, we measure the accuracy of an approximate optimal solution $(\widehat L, \widehat S)$ for problem \eqref{prob:srpcp} by using the following relative residual:
	\begin{equation}
		\label{eq:KKT res}
		\begin{aligned}
			\eta &= \frac{\Delta_1(\widehat L, \widehat S)+\Delta_2(\widehat L, \widehat S)}{1+\|\widehat L\|_F+\|\widehat S\|_F},
		\end{aligned}	
	\end{equation}
	where
	\[
	\Delta_1(\widehat L, \widehat S)=\|\widehat L-\operatorname{prox}_{\|\cdot\|_*}(\widehat L -\mu\nabla f(\widehat L, \widehat S))\|_F,\, \quad
	\Delta_2 (\widehat L, \widehat S) =\|\widehat S-\operatorname{prox}_{\lambda\|\cdot\|_1}(\widehat S-\mu\nabla f(\widehat L,  \widehat S))\|_F\\
	\] and
	$f(L,S):=\|L+S-D\|_F$. Here, as an approximate solution, $(\widehat{L}, \widehat{S})$ is assumed implicitly close to the optimal solution set to \eqref{prob:srpcp}. Hence, by the non-overfitting assumption, we can assume that $f$ is differentiable at $(\widehat{L}, \widehat{S})$.
	Note that in our algorithms  \hyperref[algo:AltMin]{AltMin} and \hyperref[algo:Over]{Acc\_AltMin}, since the exact optimal solutions (up to machine accuracy) to the involved subproblems are computed in each iteration, the final output of both algorithms, denoted by $(L^K, S^K)$, satisfies
	\[
	L^{K}=\operatorname{prox}_{\|\cdot\|_*}\left(L^{K}-\mu\nabla f(L^{K},S^{K})\right),
	\]
	i.e., $\Delta_2(L^K, S^K) =0$.
	Let $\epsilon>0$ be a given tolerance. We terminate both our algorithms  \hyperref[algo:AltMin]{AltMin} and \hyperref[algo:Over]{Acc\_AltMin} when $\eta<\epsilon$.
	For the ADMM, we terminate it when its default stopping criterion described in Section~3 of \cite{zhang2021square} reaches the tolerance $\epsilon$.
	All the tested algorithms will also be stopped when they reach the maximum computational time of 5 hours.
	For all the experiments in this section, the initial points are set as zero matrices.
	We also compare the quality of approximate solutions by measuring the objective function values, where a lower objective function value indicates a better solution.
	
	\subsection{Synthetic data}\label{sec:syn data}
	
	First we compare Algorithm  \hyperref[algo:AltMin]{AltMin} and the ADMM under various noise levels and dimensions using synthetic data.
	We adopt the procedure in Section~4 of \cite{zhang2021square} to generate the data.
	We set $n_1=n_2=n$ in the observation model \eqref{obser-model}.
	The ground truth low-rank matrix $L_0\in\mathbb{R}^{n\times n}$ is constructed as $L_0=XY^T$, where $X\in\mathbb{R}^{n\times r}$ and $Y\in\mathbb{R}^{n\times r}$ have i.i.d. entries drawn from $\mathcal{N}(0,1/n)$.
	Then, we generate the ground truth sparse matrix $S_0\in\{-1,0,1\}^{n\times n}$ having a support set of size $s$ chosen uniformly at random and independent random signs, and the random noise matrix $Z_0$ with  entrywise i.i.d. $\mathcal{N}(0,\sigma^2)$.
	In the test, we take $\lambda = 1/\sqrt{n}$ and $\mu = \sqrt{n/2}$, and set the matrix dimensions $n$, the rank of $L_0$, and the sparsity of $S_0$ by
	\[
	n\in\{10^3,2\times10^3,5\times10^3,10^4\}, \ s=\|S_0\|_0=0.05n^2, \ r={\rm rank}(L_0)=20.
	\]
	For the noise level $\sigma$, we consider $\sigma \in \{10^{-1},10^{-2},10^{-3},10^{-4}\}$.
	In addition to evaluating the objective function value, we also report the relative recovery errors for the low-rank and sparse components, defined as follows:
	\begin{equation*}
		\eta_L:=\frac{\|\widehat{L}-L_0\|_F}{1+\|L_0\|_F},\quad\eta_S:=\frac{\|\widehat{L}-S_0\|_F}{1+\|S_0\|_F},
	\end{equation*}
	where $(\widehat{L},\widehat{S})$
	denote the approximated solutions returned by the tested algorithms.
	We set stopping tolerance $\epsilon=10^{-6}$ in this experiment.

	\begin{table}
		\centering
		\caption{Comparisons between \hyperref[algo:AltMin]{AltMin} and the ADMM on synthetic data.
			The computational time is in the format of ``hours:minutes:seconds".
			The ``$\eta_S$'' and ``$\eta_L$'' columns stand for the relative recovery error of $S$ and $L$, respectively.}
		\label{table:1}
		\begin{tabular}{ccccccccc}
			\toprule
			& \multicolumn{4}{c}{AltMin} & \multicolumn{4}{c}{ADMM} \\
			\cmidrule(lr){2-5} \cmidrule(lr){6-9}
			$\sigma$ & time & $\eta_S$ & $\eta_L$ & obj & time & $\eta_S$ & $\eta_L$ & obj \\
			\midrule
			\multicolumn{9}{c}{$n=10^3$} \\
			\midrule
			$10^{-1}$ & {\bf 3} & 3.87e0 & 2.34e0 & {\bf 2133.43} & 8 & 3.87e0 & 2.34e0 & 2133.46 \\
			$10^{-2}$ & {\bf 4} & 6.06e-1 & 3.59e-1 & {\bf 350.04} & 9 & 6.06e-1 & 3.59e-1 & 350.07 \\
			$10^{-3}$ & {\bf 5} & {\bf 7.19e-2} & 3.68e-2 & {\bf 196.27} & 26 & 7.20e-2 & 3.68e-2 & 196.29 \\
			$10^{-4}$ & {\bf 10} & 7.38e-3 & 3.70e-3 & {\bf 179.47} & 38 & {\bf 7.32e-3} & 3.70e-3 & 179.49 \\
			\midrule
			\multicolumn{9}{c}{$n=2\times 10^3$} \\
			\midrule
			$10^{-1}$ & {\bf 12} & {\bf 7.65e0} & 2.41e0 & {\bf 6028.95} & 50 & 7.66e0 & 2.41e0 & 6029.02 \\
			$10^{-2}$ & {\bf 14} & 1.01e0 & 3.63e-1 & {\bf 966.83} & 49 & 1.01e0 & 3.63e-1 & 966.86 \\
			$10^{-3}$ & {\bf 18} & 1.18e-1 & 3.68e-2 & {\bf 517.81} & 1:35 & {\bf 1.17e-1} & 3.68e-2 & 517.88 \\
			$10^{-4}$ & {\bf 29} & 1.20e-2 & 3.69e-3 & {\bf 472.57} & 1:51 & {\bf 1.18e-2} & 3.69e-3 & 472.63 \\
			\midrule
			\multicolumn{9}{c}{$n=5\times10^3$} \\
			\midrule
			$10^{-1}$ & {\bf 1:55} & 1.90e1 & 2.46e0 & {\bf 23831.70} & 12:35 & 1.90e1 & 2.46e0 & 23831.88 \\
			$10^{-2}$ & {\bf 2:12} & 2.29e0 & 3.66e-1 & {\bf 3786.80} & 5:19 & 2.29e0 & 3.66e-1 & 3787.12 \\
			$10^{-3}$ & {\bf 3:03} & {\bf 2.14e-1} & {\bf 2.14e-1} & {\bf 270.82} & 17:21 & 2.15e-1 & 2.16e-1 & 270.93 \\
			$10^{-4}$ & {\bf 3:15} & 2.54e-2 & 3.96e-3 & {\bf 1808.80} & 20:04 & {\bf 2.49e-2} & 3.96e-3 & 1809.01 \\
			\midrule
			\multicolumn{9}{c}{$n=10^4$} \\
			\midrule
			$10^{-1}$ & {\bf 9:09} & 3.80e1 & 2.47e0 & {\bf 67391.20} & 43:57 & 3.81e1 & 2.47e0 & 67392.10 \\
			$10^{-2}$ & {\bf 10:38} & 4.44e0 & 3.68e-1 & {\bf 10695.74} & 39:38 & 4.44e0 & 3.68e-1 & 10696.07 \\
			$10^{-3}$ & {\bf 11:27} & 4.70e-1 & 3.69e-2 & {\bf 5586.82} & 56:00 & {\bf 4.66e-1} & 3.69e-2 & 5587.62 \\
			$10^{-4}$ & {\bf 14:19} & 4.75e-2 & 3.69e-2 & {\bf 5077.41} & 1:18:04 & {\bf 4.45e-2} & 3.69e-2 & 5078.12 \\
			\bottomrule
		\end{tabular}
	\end{table}
	
	In Table \ref{table:1}, we present numerical results of Algorithm  \hyperref[algo:AltMin]{AltMin} and the ADMM for solving various instances of problem \eqref{prob:srpcp}.
	The results clearly demonstrate the high efficiency of Algorithm  \hyperref[algo:AltMin]{AltMin}.
	Despite a slight increase in recovery errors compared to the ADMM, our algorithm achieves a lower optimal value and significantly reduces computational time.
	It is evident that \hyperref[algo:AltMin]{AltMin} is more than twice as fast as the ADMM for all tests, and can even be up to $6$ times faster (when $n=5\times 10^3,\sigma=10^{-4}$).
	
	Next, we evaluate the efficiency of the accelerated method \hyperref[algo:Over]{Acc\_AltMin} under the low-rank setting.
	Here, we primarily focus on assessing the algorithm's performance in high-dimensional scenarios.
	For this test, the same data generation procedure as in the previous test is adopted, but adjust several parameters, including the dimension of $L_0$, its rank $r$, and the choice of the parameter $\mu$.
	Specifically, we test a range of dimensions $n\in\{10^4,2\times 10^4,5\times 10^4\}$ and $r={\rm rank}(L_0)=20~ \text{or}~50$.
	Additionally, the parameter $\mu$ is adjusted dynamically to control the rank of the approximated solutions returned by the tested algorithms.
	The selection criterion for $\mu$ is to make the rank of the final solution $\widehat{L}$ as close as possible to the rank of $L_0$.
	Note that from the discussions in Section~\ref{sec:acc}, the iterates $\{(L^i, S^i)\}$ generated by  \hyperref[algo:AltMin]{AltMin} and \hyperref[algo:Over]{Acc\_AltMin} remain identical at each iteration.
	Consequently, the objective function values and recovery errors are identical across iterations of the two algorithms.
	Thus, we focus exclusively on their computational runtimes.
	Since the most time-consuming operations in both algorithms are the (partial) singular value decompositions, we will also report the computational time of these operations.
	\begin{longtable}{ccccccccc}
		\caption{Comparisons between algorithms \hyperref[algo:AltMin]{AltMin}, \hyperref[algo:Over]{Acc\_AltMin} and the ADMM on synthetic data under low-rank settings. The computational time is in the format of ``hours:minutes:seconds". ``t" stands for time out, e.g., over $5$ hours.
			The ``svd'' columns stand for the computational time of SVD(s) operations. The objective function values of Algorithms \hyperref[algo:AltMin]{AltMin} and \hyperref[algo:Over]{Acc\_AltMin} are identical, thus we only present them in the ninth column.}\label{table:2}\\
		
		\toprule
		& & \multicolumn{2}{c}{AltMin } & \multicolumn{2}{c}{ADMM} & \multicolumn{3}{c}{Acc\_AltMin} \\
		\cmidrule(lr){3-4} \cmidrule(lr){5-6} \cmidrule(lr){7-9}
		$\sigma$ & $\mu$ & time & svd  & time & obj & time & svd & obj \\
		\midrule
		\endfirsthead
		
		\toprule
		& & \multicolumn{2}{c}{AltMin } & \multicolumn{2}{c}{ADMM} & \multicolumn{3}{c}{Acc\_AltMin} \\
		\cmidrule(lr){3-4} \cmidrule(lr){5-6} \cmidrule(lr){7-9}
		$\sigma$ & $\mu$ & time & svd  & time & obj & time & svd & obj \\
		\midrule
		\endhead
		
		\bottomrule
		\endfoot
		
		\bottomrule
		\endlastfoot
		
		\multicolumn{9}{c}{$n = 10^4$,\hspace{3em}$r = 20$} \\
		\midrule
		$10^{-1}$ & $50.69$ & 6:04 & 5:22 & 50:58 & 50936.70 & {\bf 3:39} & 3:04 & {\bf 50935.81} \\
		$10^{-2}$ & $50.69$ & 5:58 & 5:18  & 33:57 & 8756.65 & {\bf 4:07} & 3:28 & {\bf 8755.68} \\
		$10^{-3}$ & $49.80$ & 5:57 & 5:13  & 1:16:10 & 5381.00 & {\bf 2:01} & 1:23 & {\bf 5380.52} \\
		$10^{-4}$ & $48.77$ & 4:29 & 3:52  & 1:11:15 & 5057.76 & {\bf 1:13} & 36 & {\bf 5056.76} \\
		\midrule
		\multicolumn{9}{c}{$n = 10^4$,\hspace{3em}$r = 50$} \\
		\midrule
		$10^{-1}$ & $55.24$ & 5:40 & 4:58  & 1:11:15 & 51460.03 & {\bf 3:37} & 3:04 & {\bf 51459.06} \\
		$10^{-2}$ & $51.24$ & 5:25 & 4:48  & 35:07 & 8828.25 & {\bf 3:34} & 2:58 & {\bf 8827.45} \\
		$10^{-3}$ & $50.51$ & 3:56 & 2:54  & 35:07 & 5414.48 & {\bf 1:19} & 17 & 5414.48 \\
		$10^{-4}$ & $50.15$ & 5:07 & 4:25  & 1:14:36 & 5085.50 & {\bf 1:33} & 50 & {\bf 5084.52} \\
		\midrule
		\multicolumn{9}{c}{$n = 2\times 10^4$,\hspace{3em}$r = 20$} \\
		\midrule
		$10^{-1}$ & 71.43 & 18:23 & 16:12  & 3:27:01 & 143571.76 & {\bf 9:45} & 7:35 & {\bf 143569.94} \\
		$10^{-2}$ & $71.17$ & 18:34 & 16:29  & 1:28:29 & 24635.24 & {\bf 9:59} & 7:58 & 24635.24 \\
		$10^{-3}$ & $70.42$ & 18:21 & 16:23 & 3:27:52 & 15184.23 & {\bf 6:33} & 4:31 & 15184.23 \\
		$10^{-4}$ & $69.44$ & 21:07 & 19:14  & t & 14279.84 & {\bf 4:47} & 2:24 & {\bf 14266.22} \\
		\midrule
		\multicolumn{9}{c}{$n = 2\times 10^4$,\hspace{3em}$r = 50$} \\
		\midrule
		$10^{-1}$ & $71.94$ & 18:37 & 16:36  & 3:38:54 & 144564.12 & {\bf 10:46} & 8:40 & {\bf 144562.35} \\
		$10^{-2}$ & $71.94$ & 18:48 & 16:45 & 1:26:10 & 24823.84 & {\bf 10:39} & 8:29 & {\bf 24820.98} \\
		$10^{-3}$ & $74.57$ & 18:35 & 16:33  & 3:25:01 & 15223.16 & {\bf 5:46} & 3:38 & 15223.16 \\
		$10^{-4}$ & $70.92$ & 24:41 & 22:10  & t & 14306.24 & {\bf 6:59} & 4:20 & {\bf 14297.59} \\
		\midrule
		\multicolumn{9}{c}{$n = 5\times 10^4$,\hspace{3em}$r = 20$} \\
		\midrule
		$10^{-1}$ & $112.54$ & 3:36:13 & 3:20:23  & t & 596786.23 & {\bf 1:22:40} & 1:05:39 & {\bf 565599.49} \\
		$10^{-2}$ & $112.54$ & 3:57:53 & 3:47:34 & t & 98025.42 & {\bf 1:19:28} & 1:03:33 & {\bf 97361.28} \\
		$10^{-3}$ & $111.90$ & 4:03:54 & 3:54:18  & t & 60035.14 & {\bf 1:04:55} & 48:09 & {\bf 60021.65} \\
		$10^{-4}$ & $111.74$ & 3:33:16 & 3:18:43  & t & 56337.21 & {\bf 50:18} & 33:55 & {\bf 56334.35} \\
		\midrule
		\multicolumn{9}{c}{$n = 5\times 10^4$,\hspace{3em}$r = 50$} \\
		\midrule
		$10^{-1}$ & $112.78$ & 3:32:38 & 3:18:15 & t & 579642.23 & {\bf 1:19:19} & 1:05:12 & {\bf 566749.13} \\
		$10^{-2}$ & $112.78$ & 4:17:32 & 4:08:45 & t & 98123.54 & {\bf 1:22:04} & 1:06:20 & {\bf 97511.22} \\
		$10^{-3}$ & $112.14$ & 3:54:23 & 3:44:57 & t & 60074.54 & {\bf 58:16} & 42:50 & {\bf 60054.16} \\
		$10^{-4}$ & $111.90$ & 4:33:07 & 4:22:48 & t & 56372.14 & {\bf 53:47} & 36:24 & {\bf 56360.05}
	\end{longtable}

	Table \ref{table:2} illustrates that the decomposition approach significantly enhances the efficiency of Algorithm  \hyperref[algo:AltMin]{AltMin}.
	As one can observe, for all the tests, \hyperref[algo:Over]{Acc\_AltMin} is faster than both \hyperref[algo:AltMin]{AltMin} and the ADMM. For the largest problem with $n = 5\times 10^4$, $r = 50$ and $\sigma = 10^{-4}$, \hyperref[algo:Over]{Acc\_AltMin} returns an approximate solution of desired accuracy within 54 minutes and is at least 5 times faster than \hyperref[algo:AltMin]{AltMin}. Meanwhile, the ADMM can not obtain a solution with a reasonable accuracy within 5 hours.
	For the smaller instances with $n \in \{10^4, 2\times 10^4\}$, \hyperref[algo:Over]{Acc\_AltMin} can be 10 to 50 times faster than the ADMM. From the second and third columns of Table \ref{table:2}, we observe that the efficiency gain of \hyperref[algo:Over]{Acc\_AltMin} is largely due to the dynamic rank estimations and efficient partial SVD updates. This observation confirms our theoretical findings in Proposition~\ref{proposition:solver_UV} and Theorem~\ref{thm:opt_UV_general}.
	We can also observe that the advantage of \hyperref[algo:Over]{Acc\_AltMin} decreases as the noise level increases. We hypothesize that this is due to the distortion of the singular value structures caused by higher noise, which in turn affects the performance of the {\tt svds} algorithm.

	\subsection{Real data from dark raw videos}
	
	In many applications, video frames can be modeled as the sum of a low-rank matrix $L_0$, a sparse outlier $S_0$, and a noise matrix $Z_0$.
	The low-rank matrix $L_0$ can naturally model the background variations, and the sparse outlier matrix $S_0$ can model the foreground objects, such as cars or individuals.
	One important task in video processing is to denoise the video and decompose it into background and foreground components.
	
	In this subsection, we conduct experiments on the real dark raw video datasets.
	All the tested dark raw video datasets are sourced from the collection of \cite{chen2019seeing}, which features recordings captured under low-light conditions, resulting in an extremely low signal-to-noise ratio (SNR)—potentially negative when expressed in dB.
	Each video consists of approximately $ 110$ frames with a resolution of $ 3672 \times 5496$ pixels in the collection.
	As a preprocessing step, we convert the RGB videos to grayscale using the {\tt rgb2gray} command in Matlab, which also reduces the size of each frame to $918\times 1374$.
	Subsequently, we stack each frame of a video as a column of the observation matrix $D\in\mathbb{R}^{n_1\times n_2}$, with $n_1=918\times 1374, n_2\in\{111,112,113,114\}.$
	In the tests, we decompose the observation matrix $D$ by solving problem \eqref{prob:srpcp} with $(\lambda,\mu)=(1/\sqrt{n_1},\sqrt{n_2/2})$.
	The approximated solution pairs $(\widehat{L},\widehat{S})$ obtained from the tested algorithms yield the following decomposition: the low-rank term $\widehat{L}$  recovers the static background, while the sparse term $\widehat{S}$ captures the moving foreground objects.
	The residual noise term is consequently defined as $\widehat{Z}=D-\widehat{S}-\widehat{L}$.
	Similar to the case in the previous subsection, the  stopping tolerance of all the tested algorithms is set to $\epsilon=10^{-5}$.
	
	To illustrate the effectiveness of our Algorithm \hyperref[algo:AltMin]{AltMin} in denoising and separating the background and foreground from original data, we  show the 30-th, 60-th, and 90-th frames for the {\it windmill} video dataset in Figure~\ref{fig:DRV-frame-5}, using the approximated solution pairs returned by \hyperref[algo:AltMin]{AltMin} .
	For better presentation, we apply the {\tt imadjust} command in Matlab as a post-processing step to enhance the contrast of these images.
	As can be observed, despite the extremely low SNR, our method, with returned approximated solution pairs $(\widehat L, \widehat S)$, effectively separates the background ($\widehat{L}$, the third column) and the moving foreground ($\widehat{S}$, the fourth column).
	The reconstructed sum $\widehat{L}+\widehat{S}$ (the second column)
	not only demonstrates effective noise removal but also preserves the video's essential content compared to the original frame $D$ (the first column).	Meanwhile, the presence of the residual noise $\widehat Z$  justifies the legitimacy of the non-overfitting assumption assumed in Theorem~\ref{thm:convergence}.
	
	\begin{figure}[!htbp]
		\centering
		\includegraphics[width=0.18\textwidth]{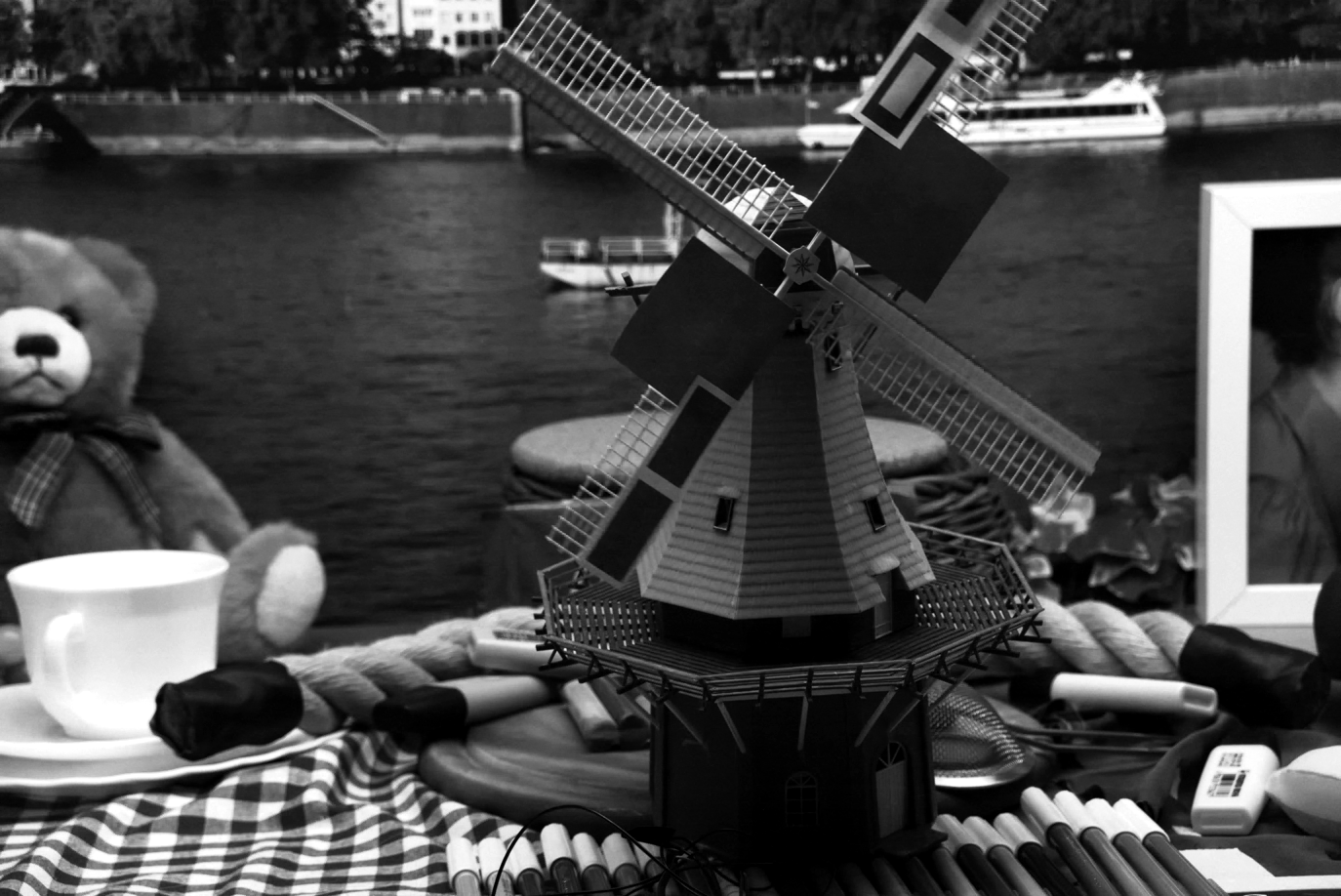}
		\includegraphics[width=0.18\textwidth]{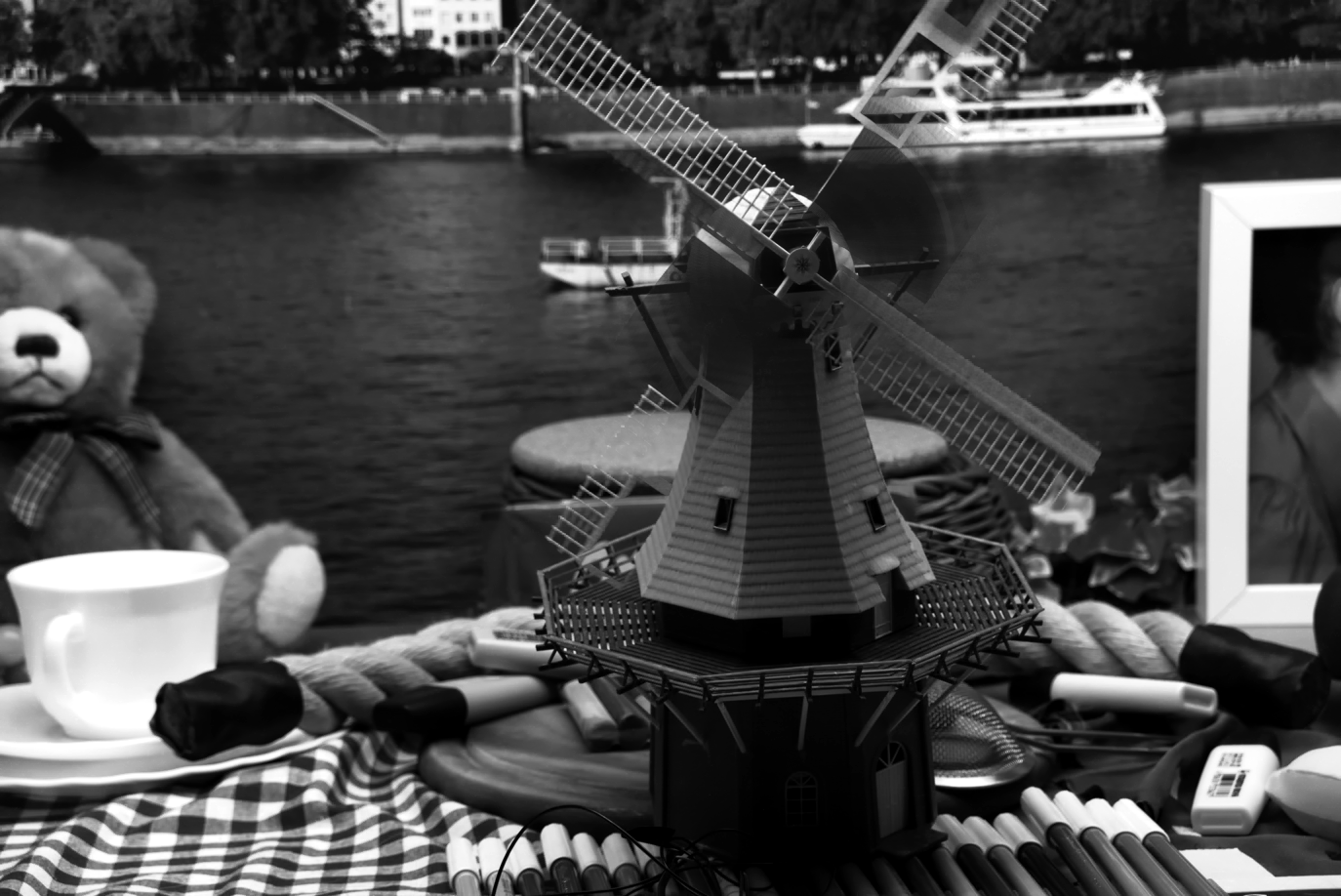}
		\includegraphics[width=0.18\textwidth]{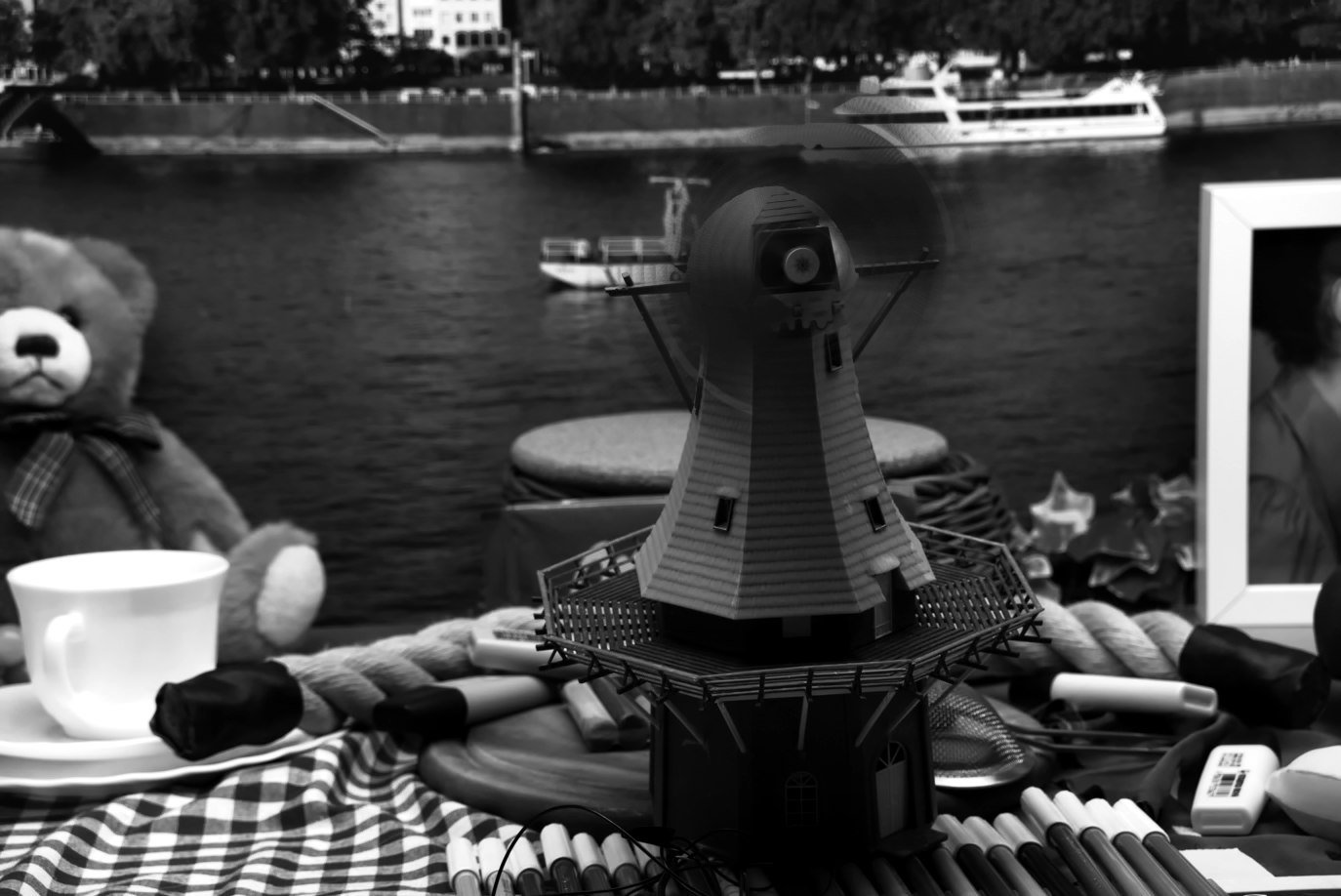}
		\includegraphics[width=0.18\textwidth]{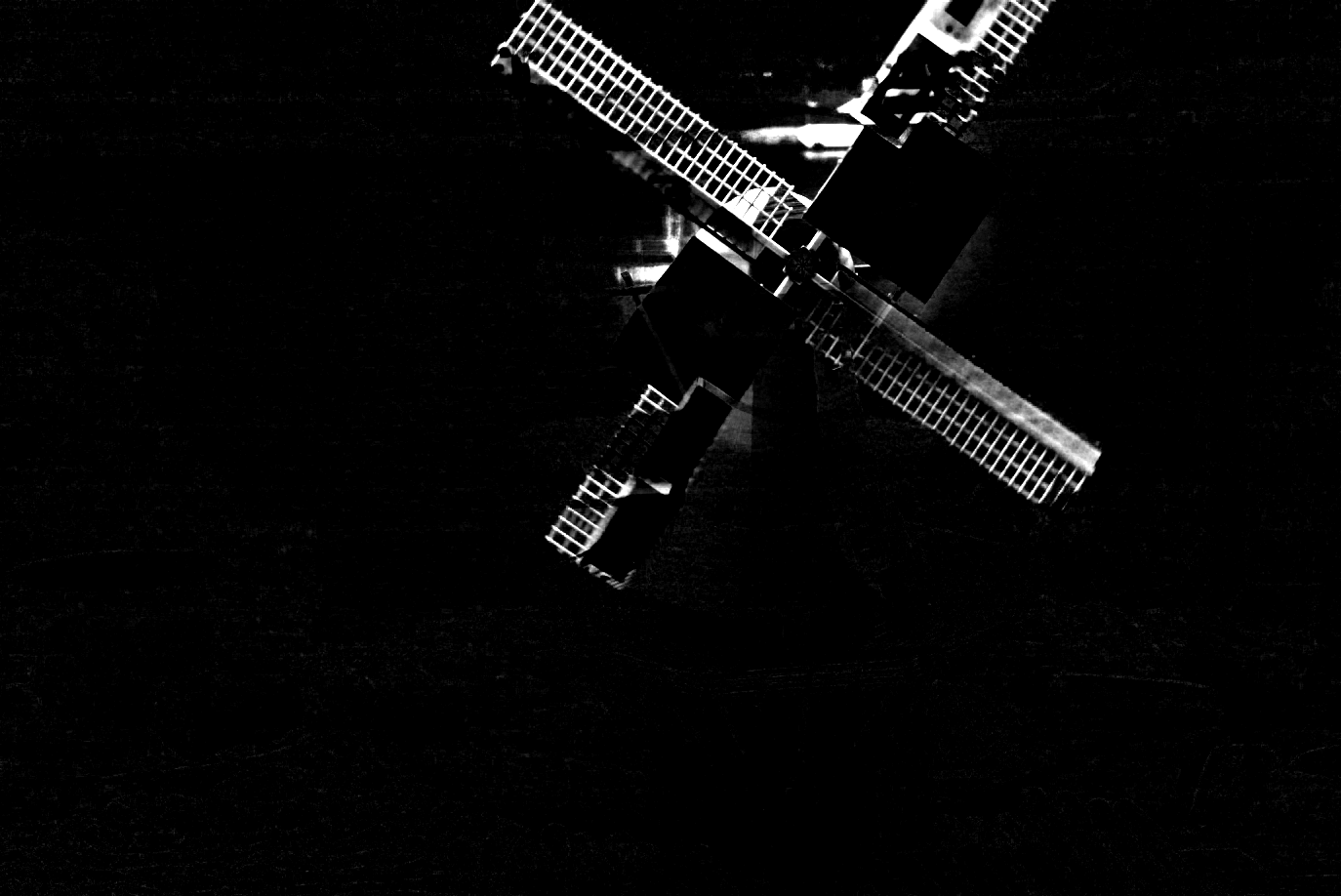}
		\includegraphics[width=0.18\textwidth]{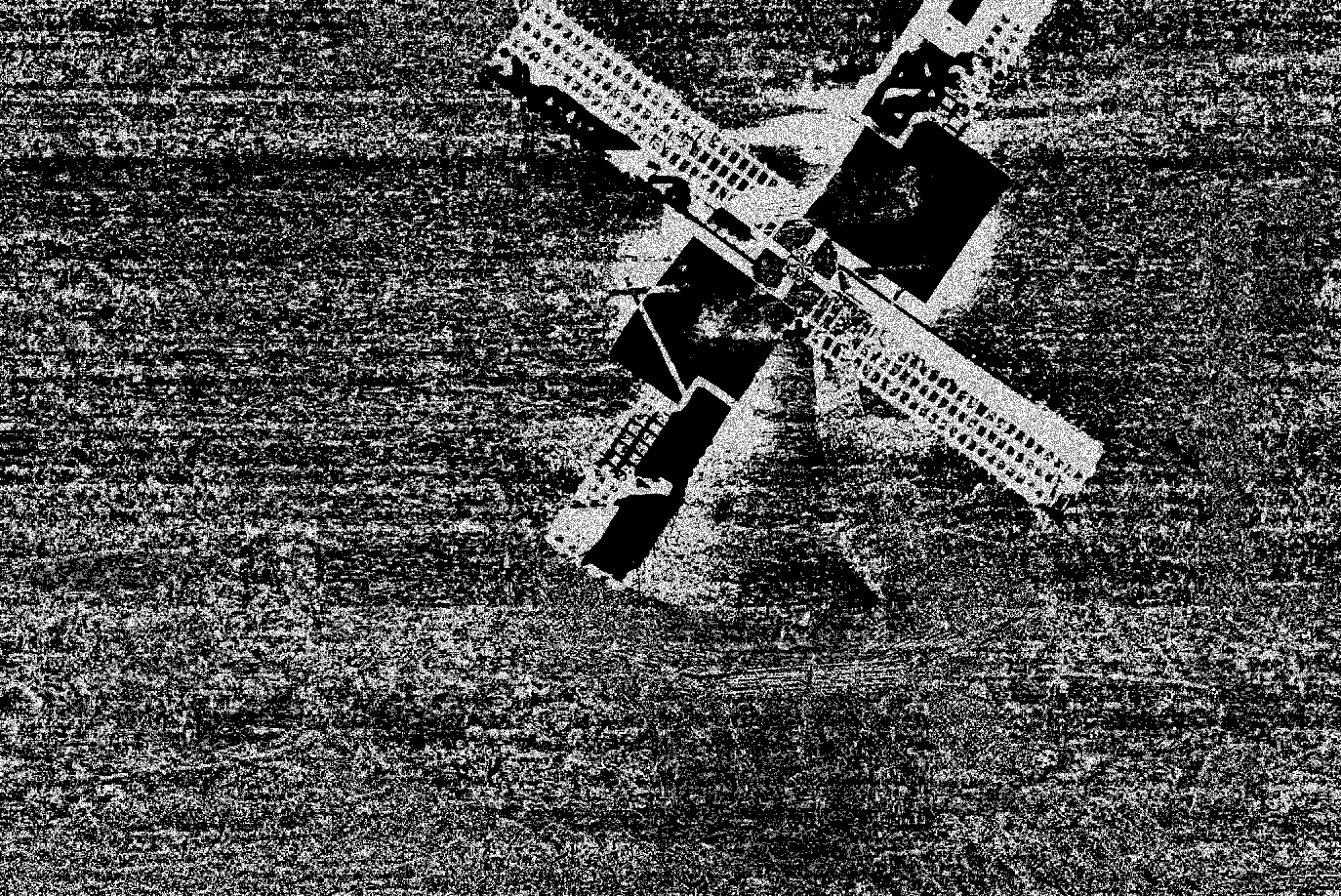}
		\vspace{10pt} \\
		\includegraphics[width=0.18\textwidth]{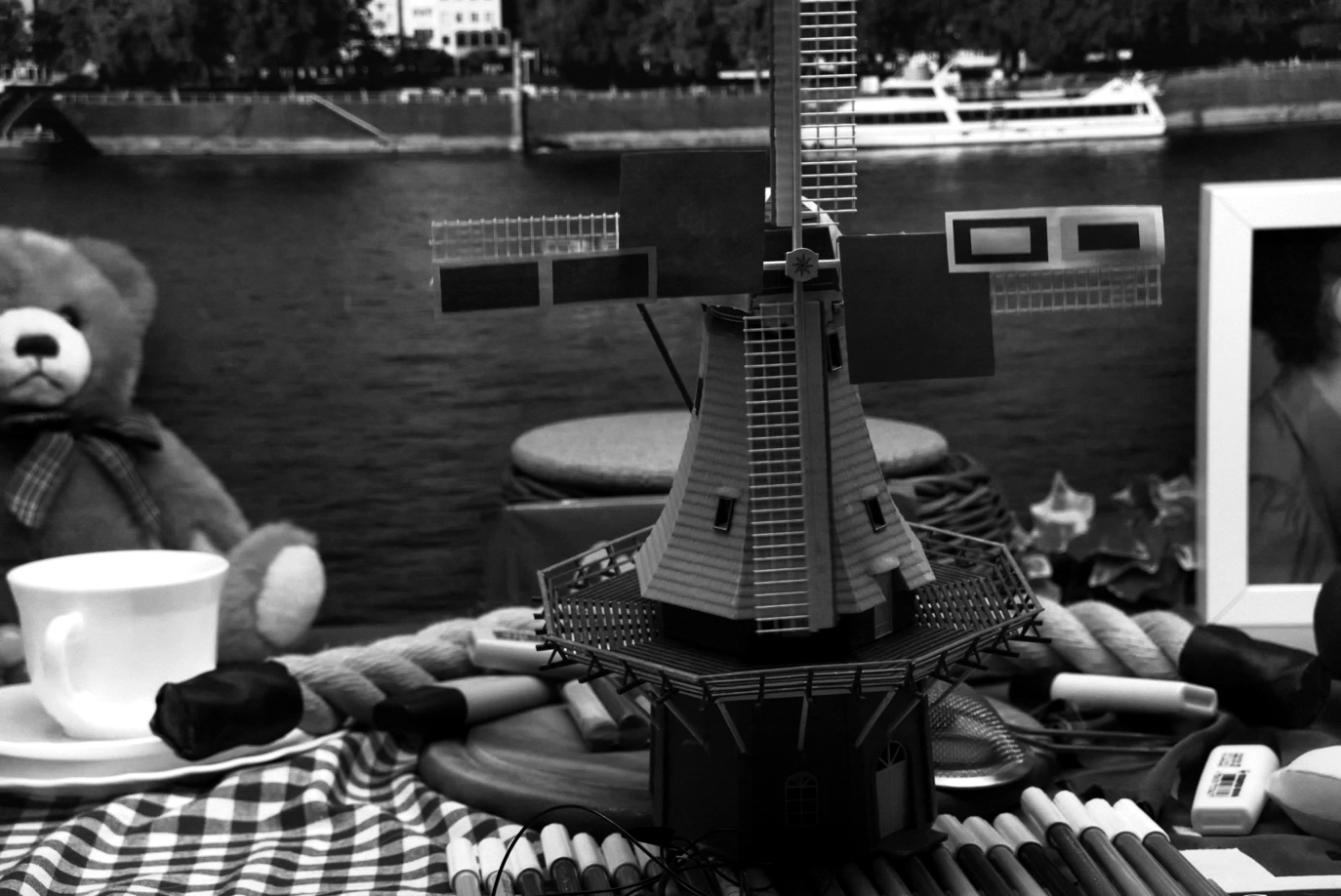}
		\includegraphics[width=0.18\textwidth]{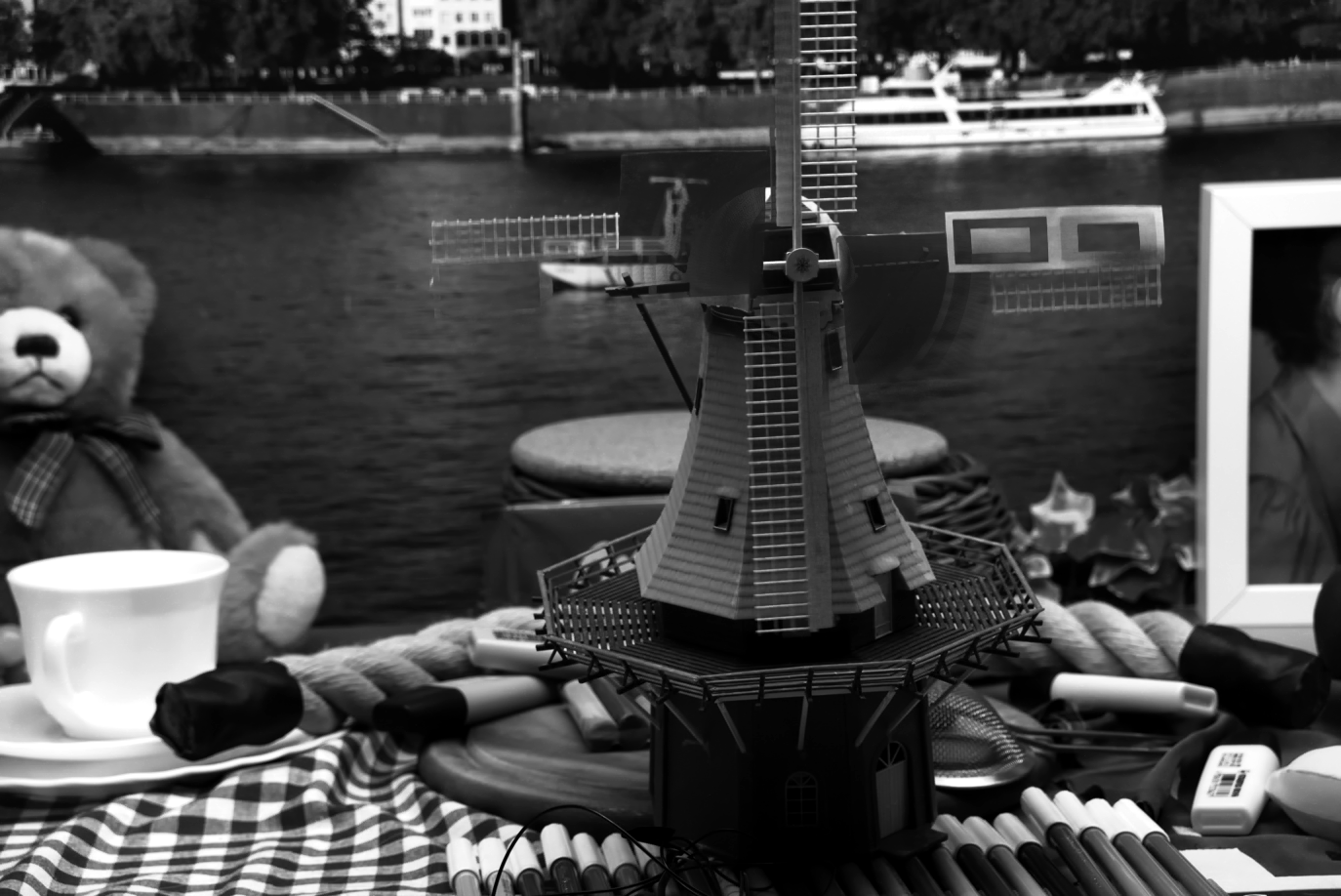}
		\includegraphics[width=0.18\textwidth]{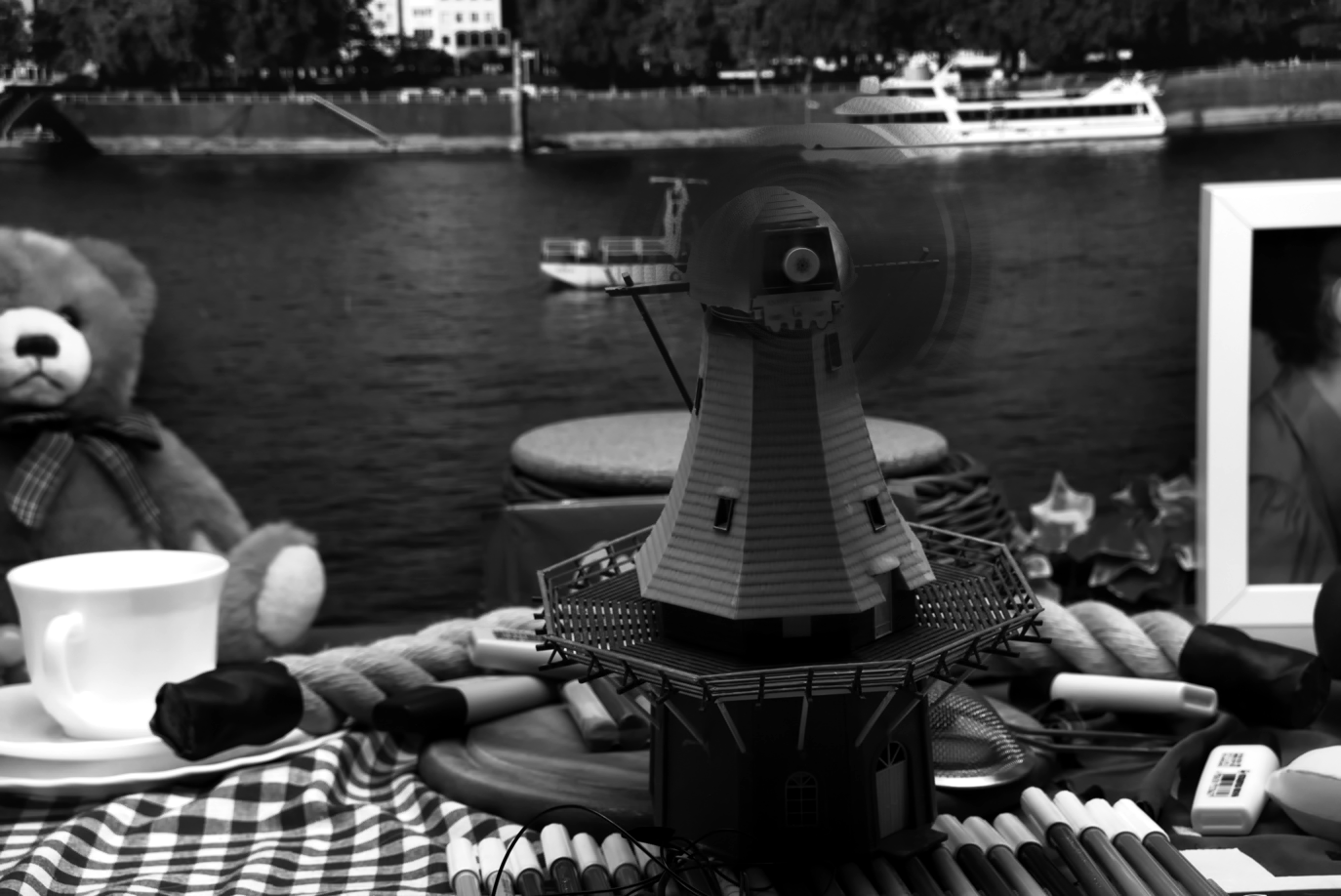}
		\includegraphics[width=0.18\textwidth]{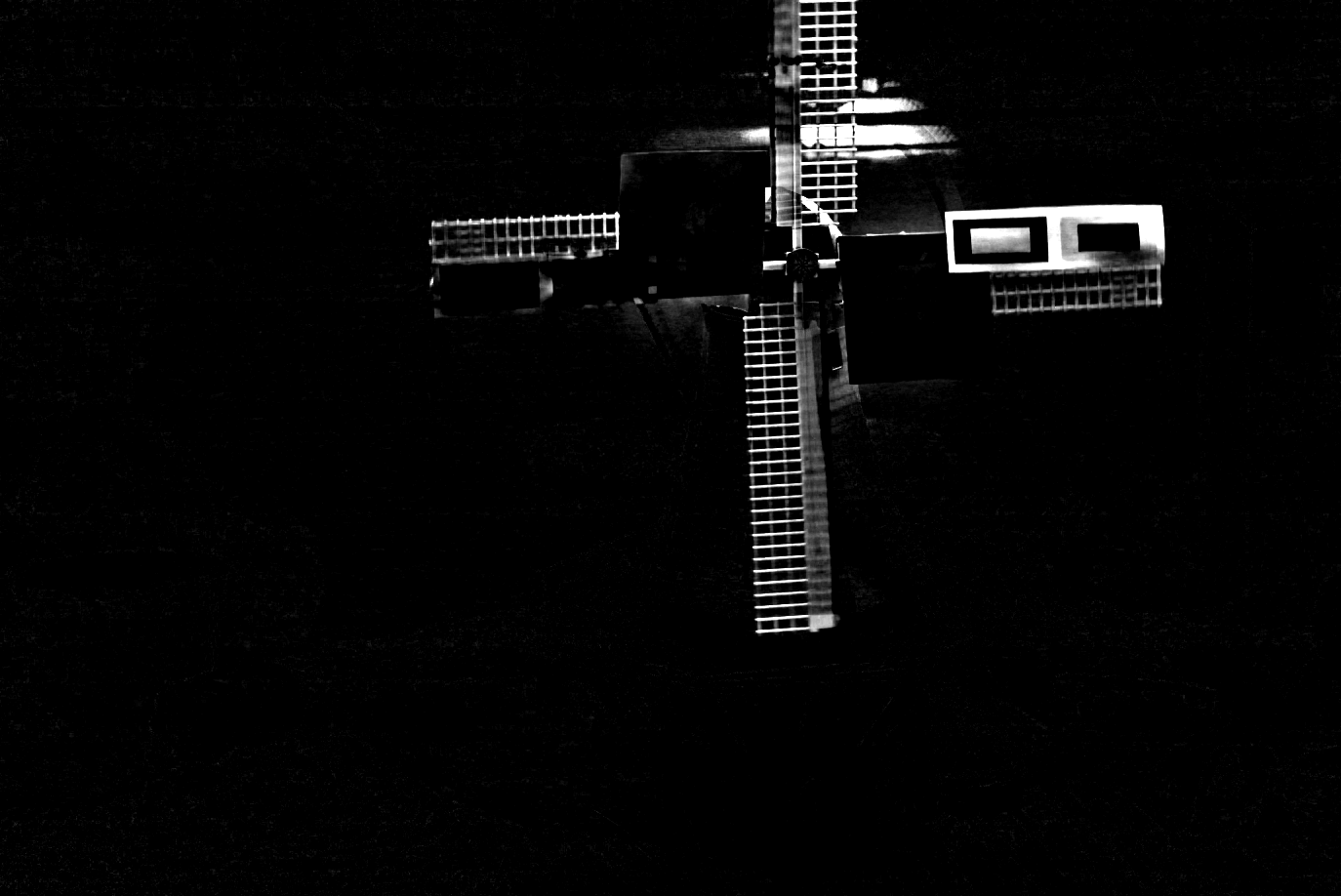}
		\includegraphics[width=0.18\textwidth]{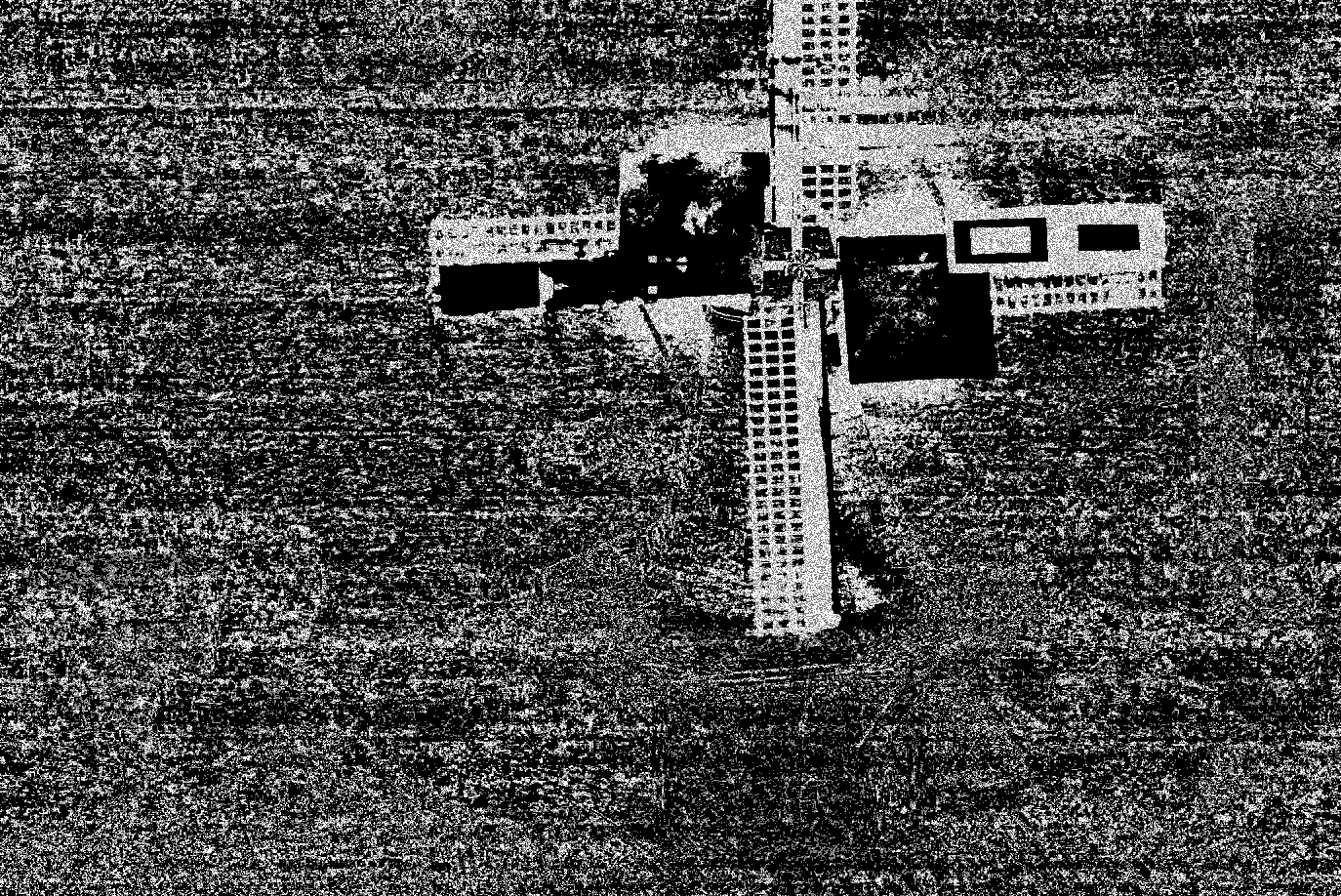}
		\\ \vspace{10pt}
		\includegraphics[width=0.18\textwidth]{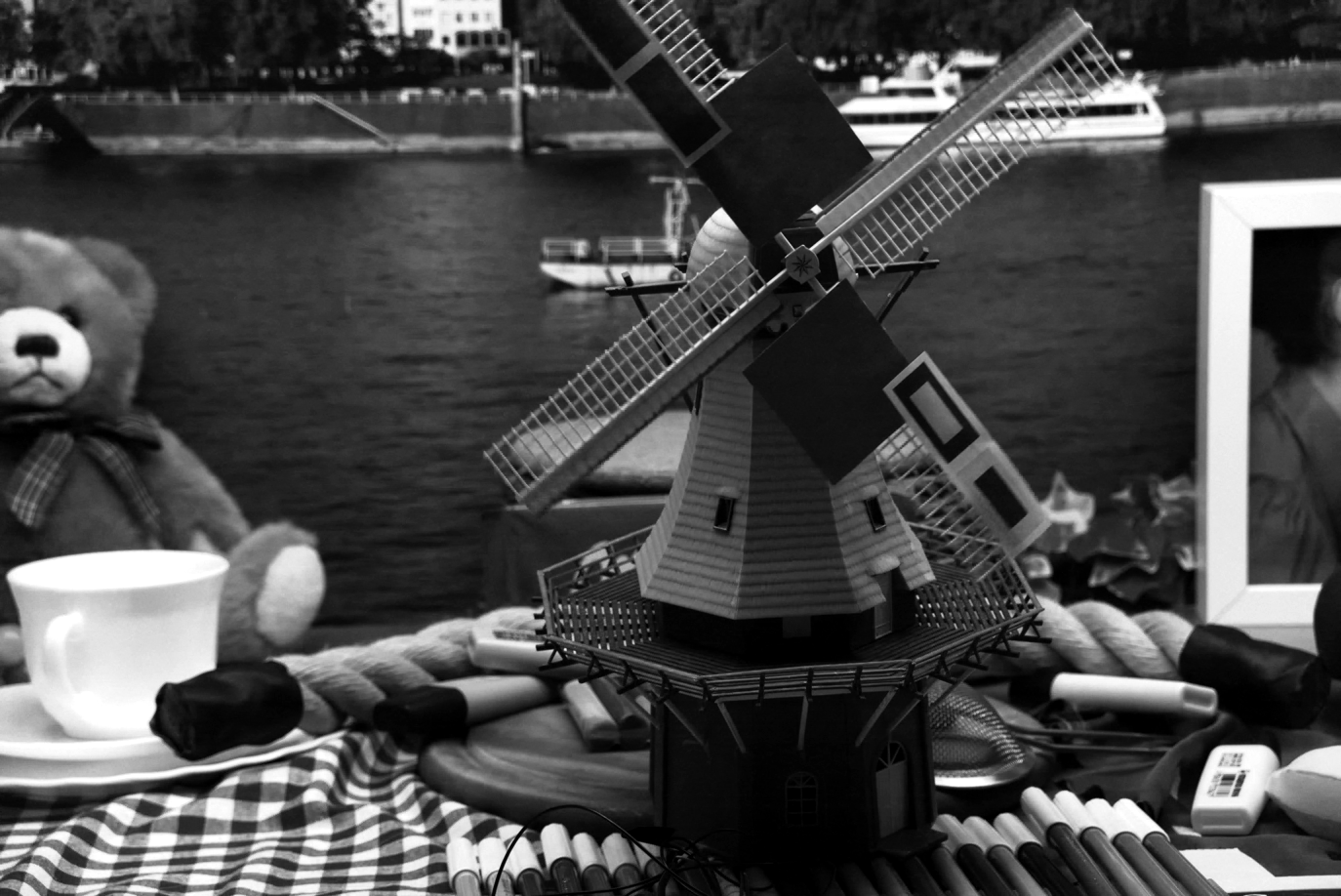}
		\includegraphics[width=0.18\textwidth]{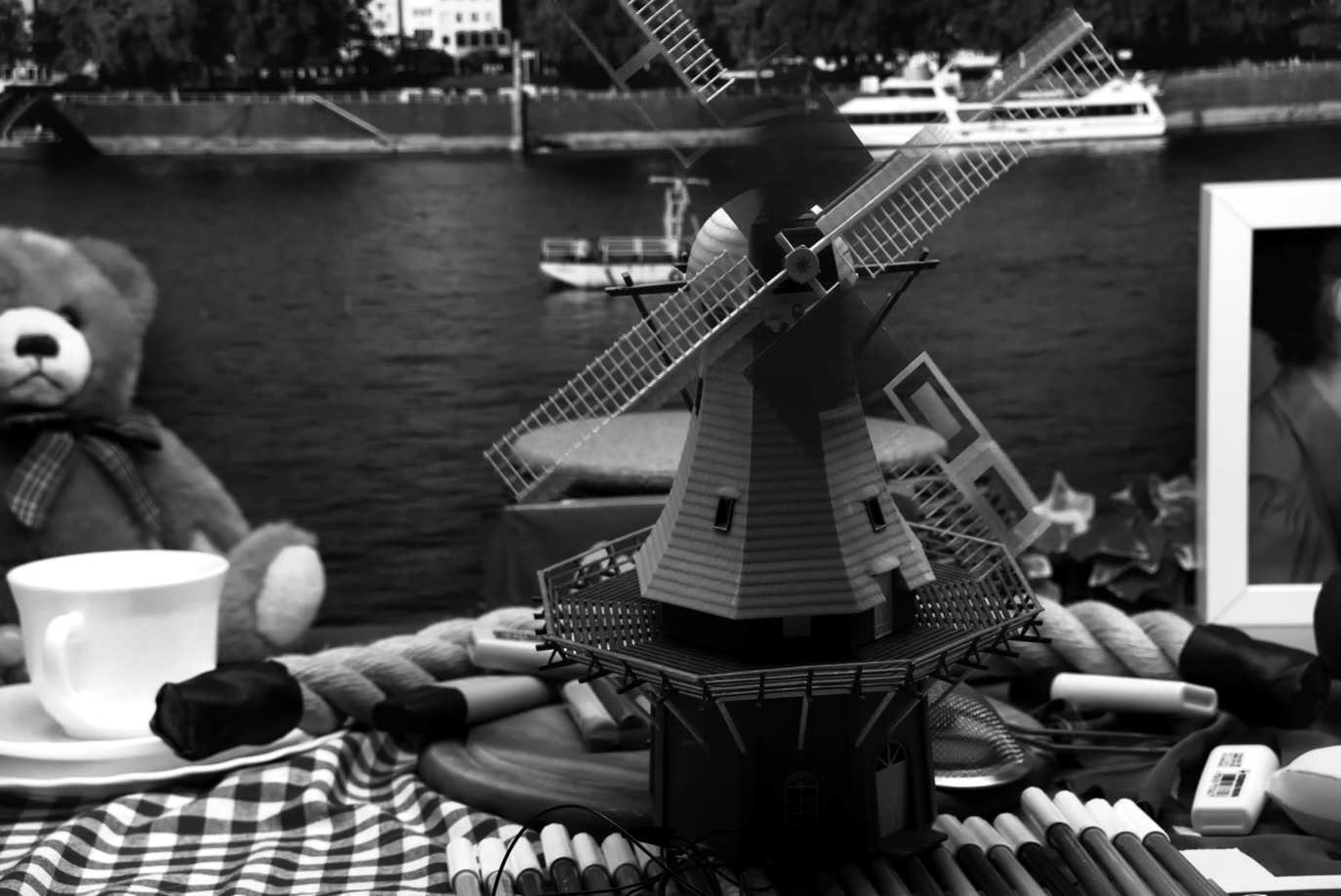}
		\includegraphics[width=0.18\textwidth]{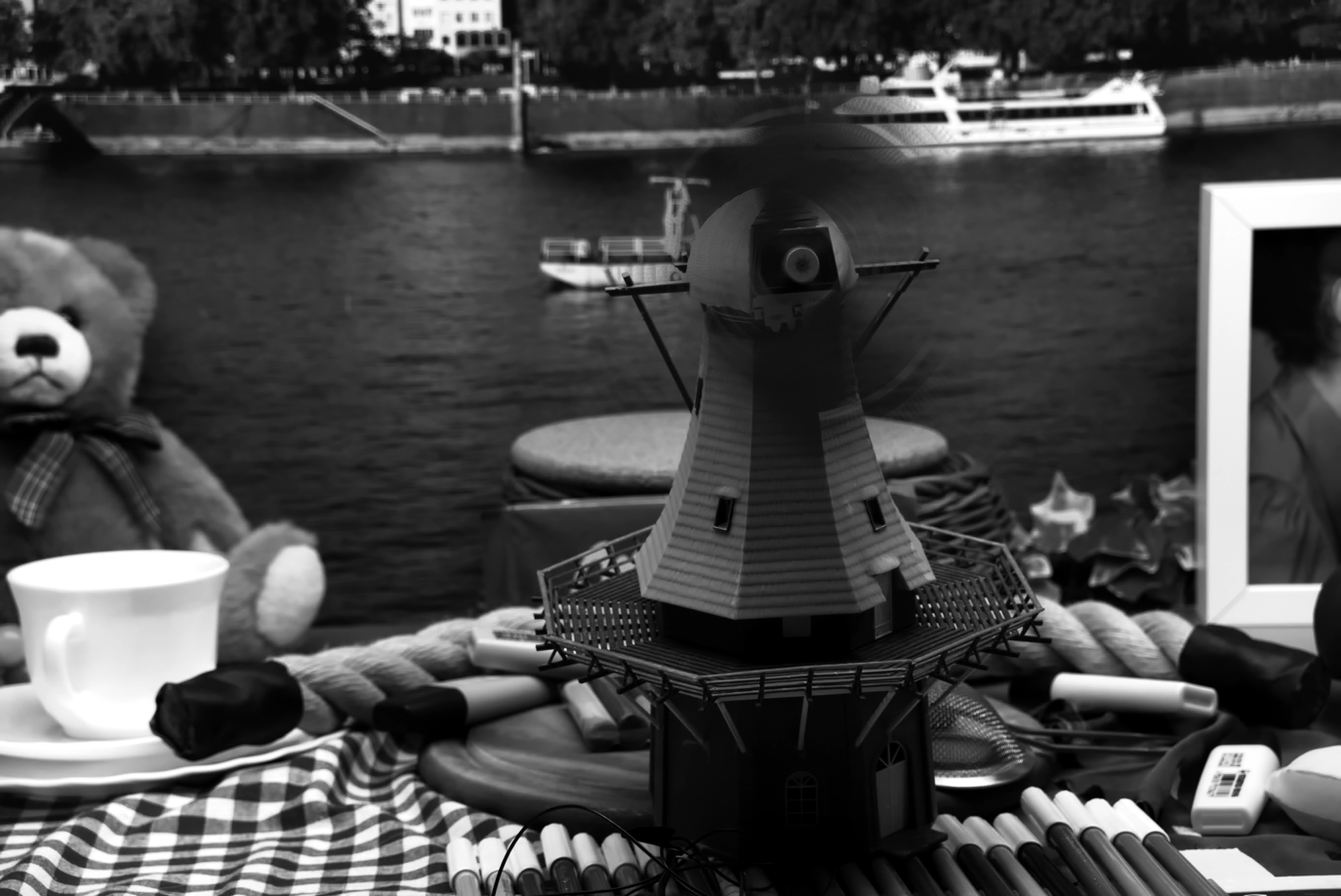}
		\includegraphics[width=0.18\textwidth]{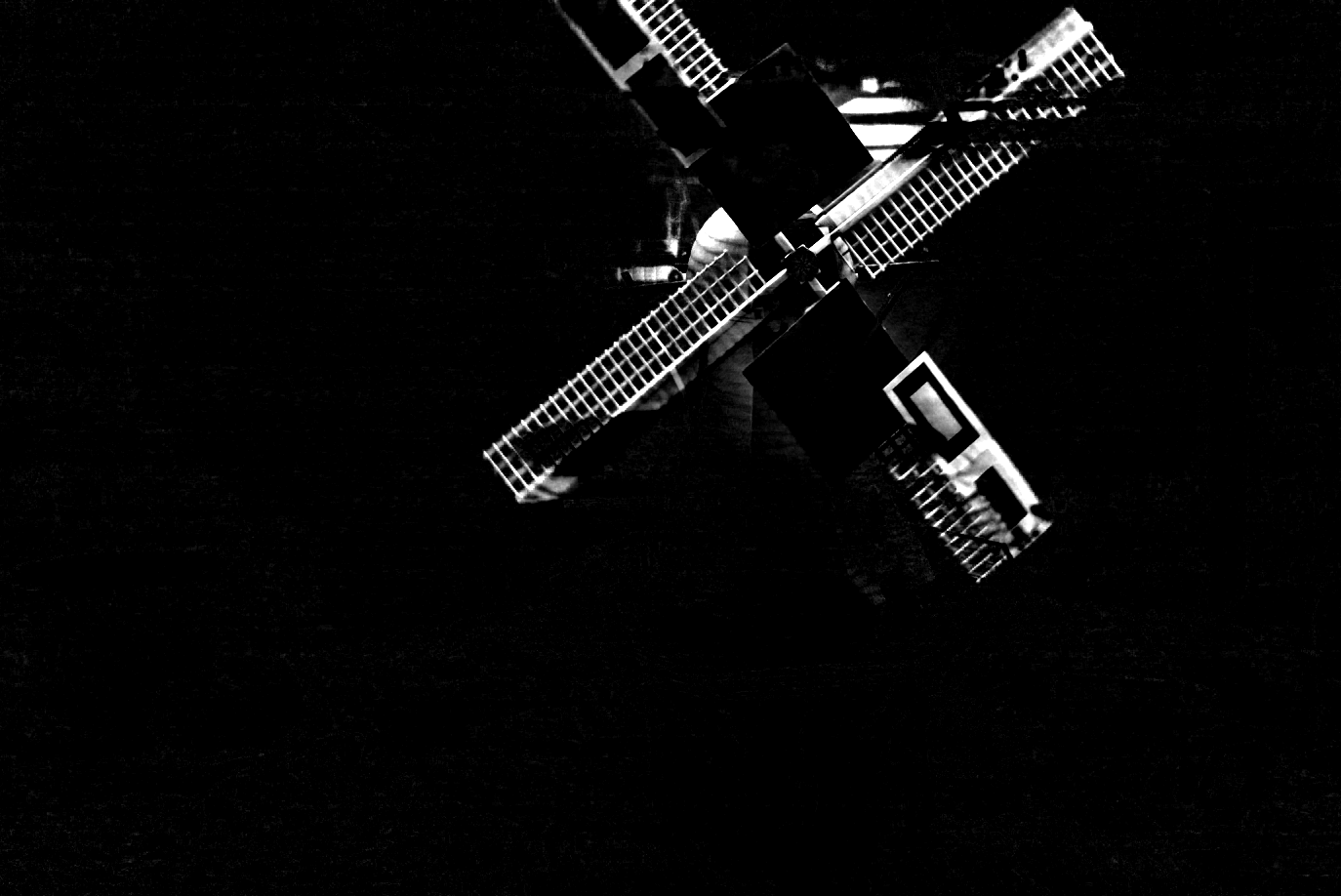}
		\includegraphics[width=0.18\textwidth]{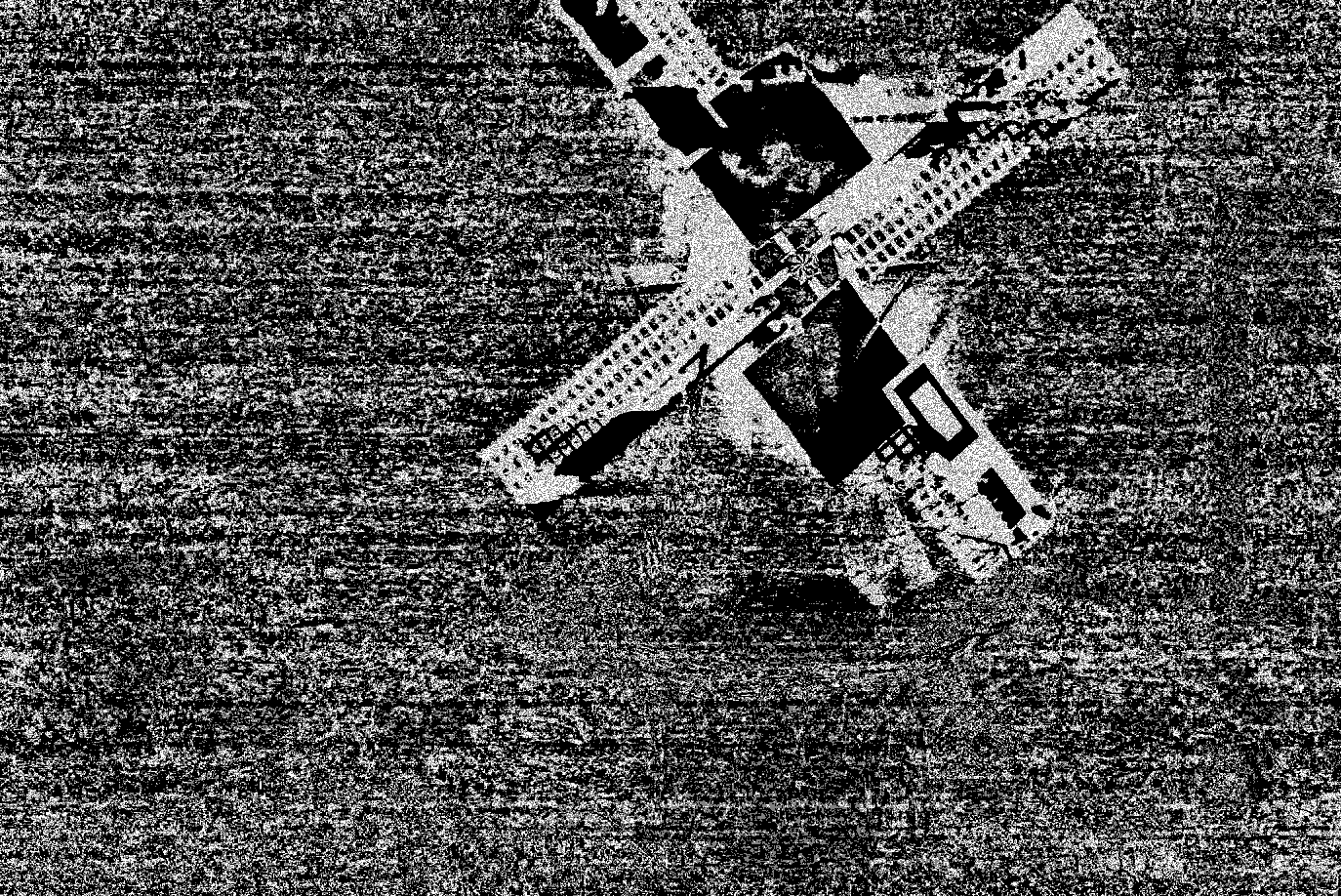}\\
		\textbf{$D$}\hspace{0.16\textwidth}\textbf{$\widehat{L}+\widehat{S}$}\hspace{0.16\textwidth}\textbf{$\widehat{L}$}\hspace{0.16\textwidth}\textbf{$\widehat{S}$}\hspace{0.16\textwidth}\textbf{$\widehat{Z}$}
		\caption{Decomposition of the 30-th, 60-th, 90-th frames for the {\it windmill} video dataset using the approximated solution pairs returned by \hyperref[algo:AltMin]{AltMin}.
			The first row depicts the 30-th raw frames, the second row shows the 60-th raw frames, and the last row shows the 90-th raw frames.
			The first column presents the original raw frames  $D$, the second column shows the denoised image $\widehat{L}+\widehat{S}$, the third and fourth columns display the background $\widehat{L}$ and the moving foreground $\widehat{S}$, respectively, and the last column shows the estimated noise $\widehat{Z}$, calculated as the residual  $D-\widehat{L}-\widehat{S}$.
		}\label{fig:DRV-frame-5}
	\end{figure}
	
	Now, we compare the performances of \hyperref[algo:AltMin]{AltMin} and the ADMM on more real datasets, i.e.,  {\it billiards}, {\it store}, {\it tea set}  and {\it table tennis}.
	Table~\ref{table:realdata_Alt_ADMM} illustrates the comparisons of \hyperref[algo:AltMin]{AltMin} and the ADMM on these datasets.
	Due to the inherent differences in stopping criteria between \hyperref[algo:AltMin]{AltMin} and the ADMM, we present the computational time and the number of iterations required for both algorithms to achieve the same objective function value, specifically defined as the
	maximum of the objective values returned by the two algorithms when the stop tolerance $\epsilon = 10^{-5}$ is met.
	The results demonstrate that \hyperref[algo:AltMin]{AltMin}  outperforms the ADMM on these large-scale dark raw video datasets.
	For {\it store}, {\it tea set} and {\it table tennis} datasets, \hyperref[algo:AltMin]{AltMin}
	exhibits significantly faster convergence rate, resulting in notably fewer iterations and shorter computational time compared to the ADMM.
	
	\begin{table}
		\centering
		\caption{Numerical results of \hyperref[algo:AltMin]{AltMin} and the ADMM on the real dark raw video datasets. The computational time is in the format of ``hours:minutes:seconds". The ``time" and ``iter'' columns represent the computational time and the number of iterations required for both algorithms to achieve the same objective function value,  which is selected as the maximum of the objective values (in the ``obj'' column) returned by \hyperref[algo:AltMin]{AltMin} and the ADMM.}
		\label{table:realdata_Alt_ADMM}
		\begin{tabular}{ccccccc}
			\toprule
			& & & \multicolumn{2}{c}{AltMin} & \multicolumn{2}{c}{ADMM} \\
			\cmidrule(rl){4-5}
			\cmidrule(rl){6-7}
			dataset & $n_2$ & obj & time & iter & time & iter \\
			\midrule
			billiards & 113 & 1726.985 & {\bf 54:02} & {\bf 365} & 58:05 & 612 \\
			store & $112$ & 10007.030 & {\bf 6:02} & {\bf 42} & 31:32 & 326 \\
			tea set & $113$ & { 11729.987} & {\bf 8:01} & {\bf 63} & 13:35 & 204 \\
			table tennis & 114 & 1196.135 & {\bf 2:04} & {\bf 15} & 21:24 & 245\\
			\bottomrule
		\end{tabular}
	\end{table}

	Next, we evaluate the performances of our algorithms \hyperref[algo:AltMin]{AltMin} and \hyperref[algo:Over]{Acc\_AltMin} on datasets {\it basketball player}, {\it windmill}, {\it night street}, and {\it flag}.
	Table~\ref{table:realdata_Alt_Acc} presents the performance comparisons between the two algorithms.
	The results indicate that \hyperref[algo:Over]{Acc\_AltMin} outperforms \hyperref[algo:AltMin]{AltMin} on these datasets, with its successes primarily attributed to the updating strategy of partial SVD (as shown in the ``svd'' columns of Table~\ref{table:realdata_Alt_Acc}).
	For example, for the datasets {\it windmill} and {\it flag}, the time spent by the \hyperref[algo:Over]{Acc\_AltMin} algorithm on SVD operations is only about $60\%$ of that of the \hyperref[algo:AltMin]{AltMin} algorithm.
	Despite the improvements observed on real datasets, the performance gain of \hyperref[algo:Over]{Acc\_AltMin} is not as pronounced as that on the synthetic data in Section~\ref{sec:syn data}.
	We hypothesize that this is due to the ``long and narrow" structure of matrix $D$ (${\rm size}(D) \approx 10^6 \times 10^2$) in these datasets, which makes the updating of $S$ in the algorithms a major computational bottleneck as shown in the ``update $S$'' columns of Table~\ref{table:realdata_Alt_Acc}. This inevitably reduces the relative impact of SVD updates and weakens their primary roles in achieving acceleration.
	
	\begin{table}[!ht]
		\centering
		\caption{Numerical results of algorithm \hyperref[algo:AltMin]{AltMin} and its accelerated counterpart Algorithm \hyperref[algo:Over]{Acc\_AltMin} on the real dark raw video datasets. The third column reports the rank of $\widehat{L}$ returned by the tested algorithms.
			The computational time is in the format of ``hours:minutes:seconds".
			The ``svd'' and ``update $S$'' columns  represent the computational time of SVD(s) operations and the sparse component $S$ update step in  \hyperref[algo:AltMin]{AltMin} and \hyperref[algo:Over]{Acc\_AltMin}, respectively.
		}
		\label{table:realdata_Alt_Acc}
		\begin{tabular}{ccccccccc}
			\toprule
			& & & \multicolumn{3}{c}{AltMin} & \multicolumn{3}{c}{Acc\_AltMin} \\
			\cmidrule(rl){4-6} \cmidrule(rl){7-9}
			dataset & $n_2$ & ${\rm rank}(\widehat{L})$ & time & svd & update $S$ & time & svd & update $S$ \\
			\hline
			basketball player & 112 & 6 & 17:25 & 6:39 & { 8:42} & {\bf 15:23} & {\bf 4:34} & 8:46 \\
			windmill & $112$ & 14 & 50:06 & 20:16 & { 23:30} & {\bf 41:47} & {\bf 11:57} & 23:34 \\
			night street & $112$ & 9 & 26:59 & 9:52 & { 13:43} & {\bf 26:13} & {\bf 9:01} & 13:46 \\
			flag & 111 & 4 & 2:44:38 & 1:10:00 & 1:20:46 & {\bf 2:17:13} & {\bf 42:35} & {1:20:32} \\
			\bottomrule
		\end{tabular}
	\end{table}

	Finally, we illustrate the rank identification property of  \hyperref[algo:AltMin]{AltMin} in Figure \ref{fig:rankidentification}.
	We collect the rank of $L$ in the iterations of \hyperref[algo:AltMin]{AltMin} on eight datasets presented in Tables \ref{table:realdata_Alt_ADMM} and \ref{table:realdata_Alt_Acc}.
	It is evident that \hyperref[algo:AltMin]{AltMin} efficiently identifies the desired rank, with convergence typically occurring within $30$ iterations.
	However, in some cases, the algorithm may require several dozen of iterations to achieve an approximate solution with the desired accuracy.
	
	\begin{figure}[!htbp]
		\centering
		\includegraphics[width=0.85\textwidth]{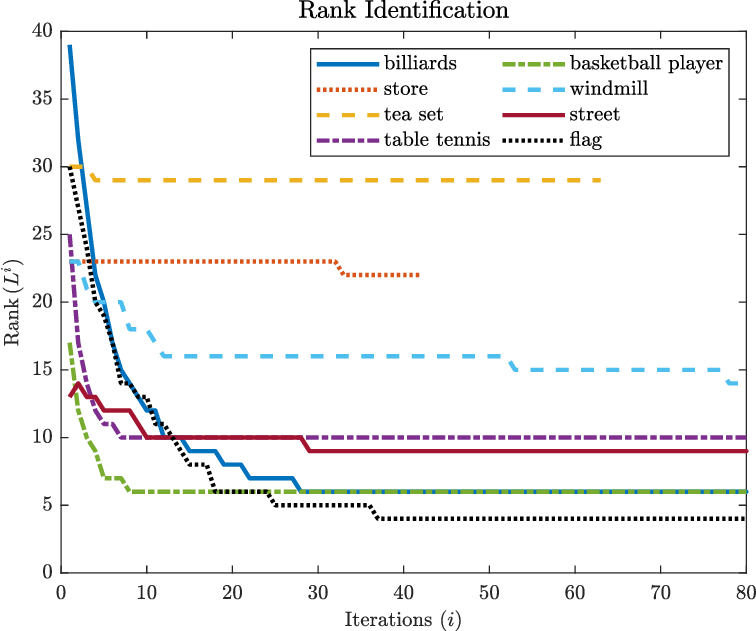}
		\caption{The rank identification of Algorithm  \hyperref[algo:AltMin]{AltMin}.}
		\label{fig:rankidentification}
	\end{figure}

	\section{Conclusion}
	
	In this paper, we study the robust matrix recovery SRPCP model from an optimization perspective.
	We establish the equivalent DRO reformulation of this model, providing a new perspective for understanding the SRPCP model.
	More important, we develop an efficient tuning-free alternating minimization algorithm, i.e., \hyperref[algo:AltMin]{AltMin}, to solve the SRPCP problem.
	Each iteration of this algorithm involves subproblems with closed-form optimal solutions.
	Furthermore, we accelerate the \hyperref[algo:AltMin]{AltMin} by using the variational formulation of the nuclear norm and the Burer-Monteiro decomposition.
	This accelerated algorithm, i.e., \hyperref[algo:Over]{Acc\_AltMin} also features closed-form optimal solutions for the subproblems in each iteration.
	Numerical experiments on various large-scale datasets demonstrate the efficiency and robustness of our proposed algorithms.
	In all tests, \hyperref[algo:AltMin]{AltMin} exhibits superior performance, while \hyperref[algo:Over]{Acc\_AltMin} achieves significant improvements over \hyperref[algo:AltMin]{AltMin} in low-rank scenarios.

	\bibliographystyle{abbrv}
	\bibliography{main.bib}

	\appendix
		\section{Proof of Theorem \ref{thm:dro_formulation}}\label{app:dro_formulation}
		\begin{proof}
		Denote $E_{jk}:=[e_j(n_1)e_k^T(n_2);e_j(n_1)e_k^T(n_2)]$ and $m:=|\Omega_{\rm obs}|$. Let the empirical distribution be given by $\mathbb{P}_{m} = \frac{1}{m} \sum_{(j,k)\in \Omega_{\rm obs}} {\bf 1}_{\{(E_{jk},D_{jk})\}}$. Then the right-hand side of \eqref{eq:DRO} equals to
		$$
		\delta \cdot \inf_{L,S} \, \left\{ \frac{1}{\sqrt{m\delta}} \sqrt{\sum_{(i,j)\in \Omega_{\rm obs}} (L_{ij} + S_{ij} - D_{ij} )^2} + \|L\|_* + \lambda\|S\|_1 \right\}^2.
		$$
		Meanwhile, from \eqref{eq:loss}, the left-hand side of \eqref{eq:DRO} equals to
		\begin{equation*}
			\begin{array}{cl}
				\displaystyle\inf_{L,S}\sup_{\xi_{jk},\zeta_{jk},(j,k)\in\Omega_{\rm obs}} & \displaystyle \frac{1}{m} \sum_{(j,k)\in\Omega_{\rm obs}}{\rm E}\Big[ \big((D - L - S)_{jk} + \langle \xi_{jk},L\rangle + \langle \zeta_{jk},S\rangle\big)^2\Big]\\[10pt]
				{\rm s.t.} & \displaystyle
				\frac{1}{m} \sum_{(j,k)\in\Omega_{\rm obs}}{\rm E} \Big[\max\big\{\| \xi_{jk}\|^2,\| \zeta_{jk}\|_{\infty}^2/\lambda^2\big\}\Big] \leq \delta.
			\end{array}
		\end{equation*}
		The proof is thus completed by letting $\delta = 1/(\mu^2m)$.
		\end{proof}
		
		\section{Proof of Proposition \ref{prop:1}}\label{app:prop_1}
		
		\begin{proof}
		For statement (i), suppose that $a_i > 0$. If $\bar{s}_i > a_i$, we construct a vector $\hat{s}$ by $\hat{s}_j = \bar{s}_j \mbox{ if } j\neq i; \hat{s}_j = a_i \mbox{ if } j = i$; if $\bar{s}_i < 0$, we construct the vector $\hat{s}$ by $\hat{s}_j = \bar{s}_j \mbox{ if } j\neq i; \hat{s}_j = 0 \mbox{ if } j = i$. In both cases, it is not difficult to verify that $\hat{s}$ achieves a lower objective function value than $\bar{s}$. 
		
		Statements (ii) and (iii) can be proved by using similar arguments. Statement (iv) holds due to (i), (ii), and (iii). Statement (v) is true since $\|s-Pa\|_2 + \tau \|s\|_1 = \|P^Ts-a\|_2 + \tau \|P^Ts\|_1$. 
		
		\end{proof}
		
		\section{Proof of Proposition \ref{pro:unique}}\label{app:unique}
		\begin{proof}
		We shall prove (i) by discussing two cases of $\tau$, i.e., case 1 with $\tau > {1}/{\|a\|_0}$ and case 2 with $\tau < {1}/{\|a\|_0}$.
		
		In case 1 with $\tau > {1}/{\|a\|_0}$, let $\bar{s}$ and $\hat{s}$ be two optimal solutions to \eqref{op:1}. From (i) in Proposition~\ref{prop:1}, we have that $0 \leq \bar{s} \leq a$ and $0 \leq \hat{s} \leq a$. Denote the optimal function value of \eqref{op:1} by $h_*:=h(\bar{s}) = h(\hat{s})$. For any $t\in(0,1)$, we know that $s^t:= t\bar{s} + (1-t)\hat{s}$ is optimal to \eqref{op:1}. Therefore, we have that
		$$
		\begin{array}{l}
			0 = h(s^t) - th(\bar{s}) - (1-t)h(\hat{s})\\
			= \|t\bar{s} + (1-t)\hat{s}-a\|_2 + \tau \|t\bar{s} + (1-t)\hat{s}\|_1 - t(\|\bar{s} - a\|_2 + \tau\|\bar{s}\|_1) - (1-t)(\|\hat{s} - a\|_2 + \tau\|\hat{s}\|_1)\\
			= \|t(\bar{s} - a) + (1-t)(\hat{s}-a)\|_2 - t\|\bar{s} - a\|_2 - (1-t)\|\hat{s} - a\|_2,
		\end{array}
		$$
		where the last equality is due to the fact that $\|t\bar{s} + (1-t)\hat{s}\|_1 = t\|\bar{s}\|_1 + (1-t)\|\hat{s}\|_1$. This further implies that $\bar{s} - a$ and $\hat{s} - a$ are parallel. By switching $\bar{s}$ and $\hat{s}$ if necessary, we can find $u\in [0,1]$ such that $ \bar{s} - a = u(\hat{s} - a)$.  Thus, we have that
		$$
		h(\bar{s}) = h(u\hat{s} + (1-u)a) = u\|\hat{s}-a\|_2 + \tau u\|\hat{s}\|_1 + \tau (1-u)\|a\|_1 = uh(\hat{s}) + (1-u)h(a),
		$$
		which implies that $(1-u)(h_* - h(a))=0$.
		From (ii) in Theorem \ref{thm:1}, we know that when $\tau > 1/\|a\|_0$, the vector $a$ is not optimal. Thus, $h_* < h(a)$ and we deduce that $u=1$, i.e., $\bar{s}=\hat{s}$.
		
		When $\tau < {1}/{\|a\|_0}$ in case 2, by (ii) in Theorem \ref{thm:1}, the vector $a$ is optimal. Let $\hat{s}$ be an optimal solution to \eqref{op:1}.  Again, we know from (i) in Proposition \ref{prop:1} that $0 \leq \hat{s} \leq a$.  Therefore, it holds that \[0 = h(\hat{s}) - h(a) =  \sqrt{\sum_{\{i\mid a_i>0\}}(a_i - \hat{s}_i)^2} - \tau \sum_{\{i\mid a_i>0\}}(a_i - \hat{s}_i) \geq (\frac{1}{\sqrt{\|a\|_0}} - \tau)\sum_{\{i\mid a_i>0\}}(a_i - \hat{s}_i) \ge 0.\]
		This implies that $\hat{s} = a$.
		We thus prove that in both cases, the optimal solution set is a singleton.
		
		For (ii), we know from (ii) in Theorem \ref{thm:1} that $a$ is optimal. The optimality of $a^{\varepsilon}$ can be easily verified by noting that $h(a^{\varepsilon}) = h(a)$.
		\end{proof}
		
		\section{Proof of Proposition \ref{pro:diagA}}\label{app:diagA}
		\begin{proof}
		Let $r:=\|\sigma\|_0$ and $\overline{L}:=\begin{pmatrix}
			{\rm Diag}(d_{\rho}(\sigma))  \\
			0
		\end{pmatrix}$. We prove the result by examining the optimality of $\overline{L}$ under different ranges of $\rho$.
		Suppose $\rho \geq {\|\sigma\|_{\infty}}/{\|\sigma\|_2}$. Then, $\left\| \frac{A}{\rho\|A\|_F} \right\| = \frac{\|\sigma\|_{\infty}}{\rho\|\sigma\|_2} \leq 1.$
		The optimality of $\overline{L}=0$ can be verified by checking the following inclusion system $0\in-\frac{A}{\|A\|_F} + \rho \{ W \in\mathbb{R}^{n_1\times n_2}\,|\,\|W\| \leq 1\}.$
		Suppose that $\rho \leq {1}/{\sqrt{r}}$.  The subgradient of the nuclear norm at $A$ is given by
		$ \partial \|A\|_*= \left\{
		\begin{pmatrix}
			I_r\\0
		\end{pmatrix}
		\right\}$
		if $A$ is of full rank ($r=n_2$), and
		$\partial \|A\|_*=\left\{
		\begin{pmatrix}
			I_r & \\
			& W
		\end{pmatrix}\,\Bigg|\,
		W \in\mathbb{R}^{(n_1-r)\times (n_2-r)},\|W\| \leq 1
		\right\}
		$  otherwise.
		Now, we can verify the  optimality of  $\overline{L}=A$ by noting the fact that
		$0\in \left\{ W \in\mathbb{R}^{n_1\times n_2}\,\middle|\,\|W\|_F \leq 1\right\} + \rho \partial \|A\|_*.$
		
		Lastly, suppose  that ${1}/{\sqrt{r}} < \rho < {\|\sigma\|_{\infty}}/{\|\sigma\|_2}$. In the definition of $d_\rho$, we know from (iii) in Theorem~\ref{thm:1} that $d_{\rho}(\sigma) = [\sigma_1- t_k;\dots;\sigma_k- t_k;0;\dots;0]$, where $1\le k\le r-1$ and $t_k=\rho\|d_{\rho}(\sigma) - \sigma\|_2$ are given in \eqref{search:k2}. Then, the optimality condition  of \eqref{op:L} at $\overline{L}$ holds since we have that
		$$
		-\frac{1}{\rho} \partial \|\overline{L} - A\|_F \ni -\frac{1}{\rho}\frac{\overline{L} - A}{\|\overline{L} - A\|_F} =
		{\rm Diag} \left(
		I_k,\frac{\sigma_{k+1}}{t_k},\dots, \frac{\sigma_r}{t_k}, 0_{(n_1-r)\times (n_2-r)} \right)
		\in \partial \|\overline{L}\|_*,
		$$
		where the last inclusion is due to the fact that $\sigma_i \leq t_k$ for $r \ge i \geq k+1$. The proof is completed.
		\end{proof}

		\section{Proof of Theorem \ref{thm:convergence}}\label{app:convergence}
		\begin{proof}
		Since $f$ is coercive, we know that the level set $X^0=\{(L,S)\,|\,f(L,S) \leq f(L^0,S^0)\}$ is compact. Moreover, the updating rules in Algorithm  \hyperref[algo:AltMin]{AltMin} imply that $f(L^{i+1},S^{i+1}) \leq f(L^{i+1},S^{i}) \leq f(L^{i},S^{i}) $, and therefore $\{(L^{i},S^{i})\}_{i\geq 0} \subseteq  X^0 $ is a bounded sequence. Let $\{(L^{i},S^{i})\}_{i\in\mathcal{D}}$ be any convergent subsequence that converges to $(\overline{L},\overline{S})$, where $\mathcal{D}\subseteq \{0,1,\dots\}$ is an index set. We can assume that  $\{L^{i+1}\}_{i\in\mathcal{D}}$ converges to a limit $\widehat{L}$, by taking a subsequence if necessary.
		By the monotonicity of function values,  we obtain that
		\begin{equation}\label{formula1}
			\lim_{i\to \infty} f(L^{i},S^{i}) = \lim_{i\to \infty} f(L^{i},S^{i+1}) \mbox{ and  }    f(\overline{L},\overline{S}) = f(\widehat{L},\overline{S}).
		\end{equation}
		The updating formulas \eqref{update-L} and \eqref{update-S} yield that for any $i\geq 0$,
		$
		f(L^{i+1},S^{i}) \leq f(L^{i+1} + d_L,S^{i}), \ \forall\,d_L,$ and $
		f(L^{i+1},S^{i+1}) \leq f(L^{i+1},S^{i+1} + d_S),  \ \forall\,d_S.
		$
		Since $f$ is continuous, we take the limit and obtain that
		\begin{align}
			f(\widehat{L},\overline{S}) \leq f(\widehat{L} + d_L,\overline{S}), \quad \forall\,d_L,\quad
			f(\overline{L},\overline{S}) \leq f(\overline{L},\overline{S} + d_S), \quad \forall\,d_S. \label{formula3}
		\end{align}
		Then, \eqref{formula1} and \eqref{formula3} imply that
		$
		f(\overline{L},\overline{S}) \leq f(\overline{L} + d_L,\overline{S}), \ \forall\,d_L.
		$
		This, together with \eqref{formula3}, implies that $(\overline{L},\overline{S})$ is a coordinatewise minimum point of $f$.
		In addition, if $f$ is non-overfitting at $(\overline{L},\overline{S})$, then by Proposition~\ref{prop:9}, $f$ is regular at $(\overline{L},\overline{S})$. Therefore, $(\overline{L},\overline{S})$ is also a stationary point of $f$. By the convexity of $f$, this is in fact a minimum point of $f$. The proof is completed.
		\end{proof}
		
		\section{Proof of Theorem \ref{thm:overparameter}}\label{sec:proof_thm_7}

		\begin{proof}
		First, we consider the simple case with $c=0$, problem \eqref{op:uv} reduces to problem \eqref{op:2}, which has already been studied in Section~ \ref{subsec:updateS}.
		Hence, we focus specifically on the case where $c>0$.
		Using similar arguments to (iii) in Proposition \ref{prop:1}, we can assume without loss of generality that $w_k>0$, and this implies $\ell=\|w\|_0=k$.
		To simplify the notation, we denote $d_\rho(w,c)$ as $d$.
		We will prove Theorem \ref{thm:overparameter} in the following three cases:
		$ {\rm (i)} \frac{\|w\|_\infty}{\sqrt{\|w\|_2^2 +c^2}}\leq \rho, \
		{\rm (ii)} 0< \rho\leq \frac{w_k}{\sqrt{k w_k^2+c^2}}, \
		{\rm (iii)} \frac{w_k}{\sqrt{k w_k^2+c^2}}< \rho<\frac{\|w\|_\infty}{\sqrt{\|w\|_2^2+c^2}}.
		$
		
		We begin with the case (i) with ${\|w\|_\infty}/{\sqrt{\|w\|_2^2 +c^2}}\leq \rho$.
		By simple calculations, we have that
		$
		\rho - \frac{\|w\|_1}{\sqrt{\|w\|_2^2+c^2}} \geq 0\iff
		\rho 1_k - \frac{w}{\sqrt{\|w\|_2^2+c^2}} \in \mathbb{R}^k_+
		\iff
		0\in -\frac{w}{\sqrt{\|w\|_2^2+c^2}} + \rho 1_k + \mathbb{R}^k_-
		\overset{\eqref{normal-cone}}{\iff}
		0\in -\frac{w}{\sqrt{\|w\|_2^2+c^2}} + \rho 1_k + \mathcal{N}_{\mathbb{R}^k_+}(0).
		$
		The last equation shows that the optimality condition of \eqref{op:uv} holds at $d=0$. Thus, $0$ is a solution to \eqref{op:uv}.
		Then, we turn to the case (ii) with $0< \rho\leq {w_k}/{\sqrt{k w_k^2+c^2}}$. In this range, we have that $k\rho^2<1$.
		First, we prove that $w_k - \rho\sqrt{\frac{c^2}{1-k\rho^2} }\geq0$ by noting that %
		\begin{equation*}
			w_k - \rho\sqrt{\frac{c^2}{1-k\rho^2 }}\geq0\iff w_k^2 \geq \frac{\rho^2c^2}{1-k\rho^2}\iff \rho^2\leq\frac{w_k^2}{k w_k^2 + c^2}.
		\end{equation*}
		Therefore, we have that $w_i-\rho\sqrt{\frac{c^2}{1-k\rho^2 }}\geq 0$ for all $i\in\{1,\dots,k\}$ since $w$ is sorted in descending order.
		Then, we  verify that the following optimality condition of problem \eqref{op:uv} holds at $d=w-\rho\sqrt{\frac{c^2}{1-k\rho^2 }}1_k$:
		\begin{equation*}
			\frac{d-w}{\sqrt{\|d-w\|_2^2+c^2}} + \rho 1_k + \mathcal{N}_{\mathbb{R}_{+}^k}(d) = -\frac{\rho\sqrt{\frac{c^2}{1-k\rho^2 }}\, 1_k}{\sqrt{\frac{k\rho^2 c^2}{1-k\rho^2}+c^2}} + \rho 1_k + \mathbb{R}^k_-\ni0,
		\end{equation*}
		where the first equation  follows from %
		$d\geq0$ and the definition of the normal cone \eqref{normal-cone}.
		
		Lastly, consider the case (iii) with ${w_k}/{\sqrt{k w_k^2+c^2}} <\rho<{\|w\|_\infty}/{\sqrt{\|w\|_2^2+c^2}}$.
		We define the sequence
		$$
		s_j = \frac{w_{j}}{\sqrt{(j-1)w_{j}^2+w_{j}^2+\cdots+w_{k}^2+c^2}},\ j=1,2,\dots,k.
		$$
		By simple computations, we know $\{s_j\}$ is a nonincreasing sequence satisfying that
		$$\frac{w_k}{\sqrt{k w_k^2+c^2}}=
		s_k \leq s_{k-1} \leq \cdots \leq s_2\leq s_1=\frac{w_1}{\sqrt{\|w\|_2^2+c^2}}.
		$$
		Therefore, there exists $\bar{\ell}\in\{1,2,\dots,k-1\}$ such that
		$\rho\in [ s_{\bar{\ell}+1}, s_{\bar{\ell}})$.
		Now we prove the existence of $i\in\{1,2,\dots,\ell-1\}$ satisfying the condition \eqref{search:rk2}.
		On one hand, $\rho < s_{\bar{\ell}}$ implies that $\bar{\ell}<\bar{\ell}-1+\frac{w_{\bar{\ell}}^2+\cdots+w_{k}^2+c^2}{w_{\bar{\ell}}^2}\leq\frac{1}{\rho^2}$.
		On the other hand, 	$\rho \geq  s_{\bar{\ell}+1}$ implies that $w_{\bar{\ell}+1}^2\leq \frac{w_{\bar{\ell}+1}^2+\dots+w_k^2+c^2}{\frac{1}{\rho^2}-\bar{\ell}}$. Thus, we further have that
		\begin{align*}
			\big[ w_{j+1}-t_j\big]\big|_{j=0}
			=w_1-\rho\sqrt{\|w\|_2^2+c^2}>0, \quad
			\big[ w_{j+1}-t_j\big]\big|_{j=\bar{\ell}}
			=w_{\bar{\ell}+1} - \sqrt{ \frac{w_{\bar{\ell}+1}^2+\dots+w_k^2+c^2}{\frac{1}{\rho^2}-\bar{\ell}}}
			\leq 0.
		\end{align*}
		Consequently, there exists $i\in\{1,\dots,\bar{\ell}\}$ such that
		$
		w_i-t_{i-1}>0 \mbox{ and } w_{i+1}-t_i\leq 0.
		$
		Then, we show that $w_i>t_i$. In fact, since $w_i>t_{i-1}$, we obtain that
		\begin{equation}\label{eq:wt-1}
			\begin{aligned}
				&w_i^2>\frac{w_i^2+w_{i+1}^2+\dots+w_k^2+c^2}{\frac{1}{\rho^2}-i+1}
				\iff\quad (\frac{1}{\rho^2}-i+1)w_i^2>w_i^2+w_{i+1}^2+\dots+w_k^2+c^2\\
				\iff\quad&(\frac{1}{\rho^2}-i)w_i^2>w_{i+1}^2+\dots+w_k^2+c^2
				\iff\quad w_i^2>\frac{w_{i+1}^2+\dots+w_k^2+c^2}{\frac{1}{\rho^2}-i}=t_i^2.
			\end{aligned}
		\end{equation}
		Thus, the existence of $i$ in \eqref{search:rk2} is established.
		Next, we show the uniqueness of $i$. Suppose that there exist two distinct integers $i$ and $l$ such that $1\leq i< l\leq \bar{\ell}$, $w_i-t_{i-1}>0$, $w_{i+1}-t_i\leq 0$, $w_l-t_{l-1}>0$, and $w_{l+1}-t_l\leq 0$.
		Namely, we have that
		$
		\big[ w_{j+1}-t_j\big]\big|_{j=i}\leq0,\ \big[ w_{j+1}-t_j\big]\big|_{j=l-1}>0,
		$
		which implies that there exists $p\in\{i+1,\dots,l-1\}$ such that $w_p-t_{p-1}\leq 0$ and $w_{p+1}-t_p>0$.
		Similar to \eqref{eq:wt-1}, from $w_p\leq t_{p-1}$, we obtain $w_p\leq t_p$. This implies $w_p\leq t_p<w_{p+1}$, which contradicts the sorted assumption \eqref{assum:uv}. Therefore, we have proved the uniqueness of $i$.
		Finally, through simple calculations, we can verify that the optimality condition of \eqref{op:uv} holds at $d$ in \eqref{eq:lowrankd}:
		\begin{equation*}%
			0\in\frac{d-w}{\sqrt{\|d-w\|^2+c^2}}+\rho 1_k+\mathcal{N}_{\mathbb{R}_+^k}(d)\overset{\eqref{normal-cone}}{=}\frac{d-w}{\sqrt{\|d-w\|^2+c^2}}+\rho 1_k+[0_i;\mathbb{R}_-^{k-i}].
		\end{equation*}
		This completes the proof for the theorem.		
		\end{proof}

		\section{Proof of Proposition \ref{prop:condition_k_geq_rankL}}\label{sec:proof_prop_8}
		\begin{proof}
		To prove this conclusion, we will divide the proof into two parts:
		\begin{enumerate}
			\item[Claim 1.] When \eqref{eq:condition_k_geq_rankL} is satisfied, we show that $[d_\rho(w,c);0_{n_2-k}]$ is an optimal solution to \eqref{op:2}.
			\item[Claim 2.] When \eqref{eq:condition_k_geq_rankL} is not satisfied, i.e., $0<\rho<{a_{\ell+1}}/{\sqrt{\ell  a_{\ell+1}^2+c^2}}$, we argue that $[d_\rho(w,c);0_{n_2-k}]$ is not an optimal solution to \eqref{op:2}.
		\end{enumerate}
		For the first part, by noting \eqref{eq:lowrankd} and
		$\frac{a_{\ell+1}}{\sqrt{\ell a_{\ell+1}^2+c^2}}\leq \frac{a_{\ell}}{\sqrt{\ell a_{\ell}^2+c^2}}=\frac{w_{\ell}}{\sqrt{\ell w_{\ell}^2+c^2}}\leq \frac{\|w\|_\infty}{\sqrt{\|w\|_2^2+c^2}},$
		we will prove Claim 1 in the following three cases:
		$
		{\rm (i)}\frac{\|w\|_\infty}{\sqrt{\|w\|_2^2 +c^2}}\leq\rho, \
		{\rm (ii)} \frac{w_\ell}{\sqrt{\ell w_\ell^2+c^2}} <\rho
		<\frac{\|w\|_\infty}{\sqrt{\|w\|_2^2+c^2}}, \
		{\rm (iii)} \frac{a_{\ell+1}}{\sqrt{\ell  a_{\ell+1}^2+c^2}}\leq\rho\leq\frac{w_\ell}{\sqrt{\ell  w_\ell^2+c^2}}.
		$
		We begin with the case (i) with ${\|w\|_\infty}/{\sqrt{\|w\|_2^2 +c^2}}\leq\rho$. From Theorem \ref{thm:overparameter}, we know that $d_\rho(w,c)=0_k$.
		Then, $[d_\rho(w,c);0_{n_2-k}]=0_{n_2}$ is an optimal solution to \eqref{op:2} due to Theorem \ref{thm:1}.
		Now, for case (ii) with  ${w_\ell}/{\sqrt{\ell w_\ell^2+c^2}} <\rho
		<{\|w\|_\infty}/{\sqrt{\|w\|_2^2+c^2}}$, we know from Theorem \ref{thm:overparameter} that there exists $i\in\{1,2,\dots,\ell-1\}$ such that $t_i$ satisfies \eqref{search:rk2}.
		Since
		\begin{equation*}
			\rho>\frac{w_\ell}{\sqrt{\ell w_\ell^2+c^2}}\geq\frac{1}{\sqrt{\|a\|_0}} \mbox{ and }
			\frac{\|w\|_\infty}{\sqrt{\|w\|_2^2+c^2}} = \frac{\|a\|_\infty}{\|a\|_2},
		\end{equation*}
		we know that $\rho\in({1}/{\sqrt{\|a\|_0}},{\|a\|_\infty}/{\|a\|_2})$ in this case.
		Observing that $t_i$ now also satisfies condition \eqref{search:k2}, hence, it holds from Theorem (iii) in \ref{thm:1} that $[d_\rho(w,c);0_{n_2-k}]$ is an optimal solution to \eqref{op:2}.
		
		Now, we consider the case (iii) with  ${a_{\ell+1}}/{\sqrt{\ell  a_{\ell+1}^2+c^2}}\leq\rho\leq{w_\ell}/{\sqrt{\ell  w_\ell^2+c^2}}$.
		If $c = 0$,
		then $a_{k+1}=\dots=a_{n_2}=0$, and
		\[ d_\rho(w,c)=w \mbox{ and }
		\frac{w_\ell}{\sqrt{\ell  w_\ell^2+c^2}}=\frac{1}{\sqrt{\|a\|_0}}.
		\]
		Then, according to (ii) in Theorem \ref{thm:1}, $[d_\rho(w,c);0_{n_2-k}]=[w;0_{n_2-k}]=a$ is an optimal solution  to \eqref{op:2}.
		Lastly, if $c>0$, then $\ell=\|w\|_0 = k$.
		Since
		\begin{equation}\label{eq:mu left range}
			\rho^2\geq \frac{a_{k+1}^2}{c^2+ka_{k+1}^2}\geq\frac{a_{k+1}^2}{(\|a\|_0-k)a_{k+1}^2+ka_{k+1}^2}\geq \frac{1}{\|a\|_0},
		\end{equation}
		we divide the following proof into two cases $\rho^2={1}/{\|a\|_0}$ and $\rho^2>{1}/{\|a\|_0}$.
		When $\rho^2={1}/{\|a\|_0}$, we have from \eqref{eq:mu left range} that
		$c^2 = (\|a\|_0 - k)a_{k+1}^2 = \sum_{i=k+1}^{n_2} a_i^2$, and thus $a_{k+1} = \min\{a_i|a_i>0\}$.
		Then,  (i) in Theorem \ref{thm:1} and (ii) in Proposition \ref{pro:unique}  imply that
		\[[d_\rho(w,c);0_{n_2-k}]=[w-a_{k+1};0_{n_2-k}]=a - a_{k+1}{\rm sign}(a) \odot 1_{n_2}\] is an optimal solution to \eqref{op:2}.
		Now, we consider the case with $\rho^2>{1}/{\|a\|_0}$. From Remark \ref{remark:ineq}, we know that
		\begin{equation}\label{eq:mu right range}
			\rho\leq \frac{w_k}{\sqrt{k w_k^2+c^2}}\leq\frac{\|w\|_\infty}{\sqrt{\|w \|_2^2 + c^2}}=\frac{\|a\|_\infty}{\|a\|_2}.
		\end{equation}
		If $\rho\in\left({1}/{\sqrt{\|a\|_0}},{\|a\|_\infty}/{\|a\|_2}\right)$,
		then
		\[ t_k:=\sqrt{\frac{a_{k+1}+\dots+a_{n_2}^2}{\frac{1}{\rho^2}-k}}=\rho\sqrt{\frac{c^2}{1-k\rho^2}}<w_k,\]
		where the last inequality follows due to  $\rho< \|a\|_\infty/ \|a\|_2 = {w_{k}}/{\sqrt{\|w\|^2+c^2}}$.
		This, together with (iii) in Theorem \ref{thm:1} and the first inequality in \eqref{eq:mu right range} that $a_{k+1}\leq \rho\sqrt{{c^2}/(1-k\rho^2)}=t_k$, implies that  $[d_\rho(w,c);0_{n_2-k}]$ is an optimal to \eqref{op:2}.
		Lastly, if $\rho = {\|a\|_\infty}/{\|a\|_2}$, \eqref{eq:mu right range} implies $w_1=\dots=w_k>0$.
		Thus,
		\[
		\rho\sqrt{\frac{c^2}{1-k\rho^2}} = \frac{w_k}{\|a\|_2} \frac{c}{\sqrt{1 - \frac{k w_k^2}{\|a\|_2^2}}} =
		\frac{w_k}{\|a\|_2} \|a\|_2 = w_k.
		\]
		Hence, we have from \eqref{eq:lowrankd} in Theorem \ref{thm:overparameter} that
		$d_\rho(w,c)=w-w_k 1_k = 0_k$.
		Then, (i) in Theorem \ref{thm:1}  asserts that
		$[d_\rho(w,c);0_{n_2-k}]=0_{n_2}$ is an optimal solution to \eqref{op:2}.
		We thus complete the proof for Claim 1.
		
		Now, we turn to prove Claim 2, i.e., if $0<\rho<{a_{\ell+1}}/{\sqrt{\ell a_{\ell+1}^2+c^2}}$, then $[d_\rho(w,c);0_{n_2-k}]$ cannot be an optimal solution to \eqref{op:2}.
		Since $0<{a_{\ell+1}}/{\sqrt{\ell a_{\ell+1}^2+c^2}}$, it holds that $\ell = k$ and $a_{k+1} = a_{l+1} > 0$. Otherwise, if $\ell < k$, then $a_{\ell+1} =0$, a contradiction. Then,
		\begin{equation}\label{eq:c_and_rho}
			c = \sqrt{\sum_{i=k+1}^{n_2} a_i^2} \ge a_{k+1} > 0 \mbox{ and }
			0 < \rho < \frac{a_{k+1}}{\sqrt{k a_{k+1}^2+c^2}} < \frac{1}{\sqrt{k}}.
		\end{equation}
		Thus, we know that $d_\rho(w,c)\neq w$ from \eqref{eq:lowrankd}.
		Now, we verify the optimality condition of \eqref{op:2} at $\tilde{s}:=[d_\rho(w,c);0_{n_2-k}]$, i.e., whether $0_{n_2}$ belongs to the subgradient
		\begin{equation*}
			{\cal M}_{\tilde s}:= \frac{\tilde s-a}{\|\tilde s-a\|_2}+\rho 1_{n_2} + \mathcal{N}_{\mathbb{R}^{n_2}_+}(\tilde s).
		\end{equation*}
		For any $v\in {{\cal M}_{\tilde s}}$, by noting the definition of $\mathcal{N}_{\mathbb{R}^{n_2}_+}(\tilde s)$ and $\tilde s_{k+1} = 0$, we see that
		\begin{equation}\label{eq:v_k1}
			v_{k+1} \le \frac{ -a_{k+1}}{\|\tilde s-a\|_2}+ \rho = -\frac{a_{k+1}}{\sqrt{\frac{c^2}{1-k\rho^2}}} +\rho < 0,
		\end{equation}
		where the equality follows from \eqref{eq:lowrankd} and the second inequality holds by \eqref{eq:c_and_rho} and simple calculations.
		Now, \eqref{eq:v_k1} asserts that
		$0\not\in {\cal M}_{\tilde s}$, and thus $[d_\rho(w,c);0_{n_2-k}]$ is not an optimal solution to \eqref{op:2}. This ends the proof for the theorem.		
		\end{proof}
		
\end{document}